\newcommand{\C}{\mathbb{C}}
\newcommand{\cU}{\mathcal{U}}
\newcommand{\cO}{\mathcal{O}}
\newcommand{\cR}{\mathcal{R}}
\newcommand{\bN}{\mathbb{N}}
\newcommand{\As}{A^s}
\newcommand{\R}{\mathbb{R}}
\newcommand{\Z}{\mathbb{Z}}
\newcommand{\N}{\mathbb{N}}
\newcommand{\cB}{\mathcal{B}}
\newcommand{\cD}{\mathcal{D}}
\newcommand{\covU}[1]{\widetilde{U}(#1)}
\newcommand{\bK}{\mathbb{K}}
\newcommand{\bT}{\mathbb{T}}
\newcommand{\Aut}[1]{\text{\normalfont Aut}(#1)}
\newcommand{\Autt}[1]{\text{\normalfont Aut}_\tau(#1)}
\newcommand{\Autz}[1]{\text{\normalfont Aut}_0(#1)}
\newcommand{\Aute}[1]{\text{\normalfont Aut}_{1\otimes e}(#1)}
\newcommand{\Outt}[1]{\text{\normalfont Out}_\tau(#1)}
\newcommand{\Out}[1]{\text{\normalfont Out}(#1)}
\newcommand{\Inn}[1]{\text{\normalfont Inn}(#1)}
\newcommand{\id}[1]{\operatorname{id}_{#1}}
\newcommand{\Fun}[2]{\text{\normalfont Fun}_{\text{\normalfont su}}(#1,#2)} 
\newcommand{\stFun}[2]{\text{Fun}_{\text{str}}(#1,#2)} 
\newcommand{\stF}[2]{\mathcal{F}_{\text{str}}(#1,#2)} 
\newcommand{\lxF}[2]{\mathcal{F}(#1,#2)} 
\newcommand{\Flax}{\mathcal{F}} 
\newcommand{\Top}{\mathcal{T}\!op} 
\newcommand{\Deltainj}{\Delta_{\text{inj}}}
\newcommand{\cC}{\mathcal{C}}
\newcommand{\cG}{\mathcal{G}}
\newcommand{\cF}{\mathcal{F}}
\newcommand{\wcG}{\widetilde{\mathcal{G}}}
\newcommand{\cH}{\mathcal{H}}
\newcommand{\cV}{\mathcal{V}}
\DeclareMathOperator{\Ad}{Ad}
\DeclareMathOperator{\ob}{ob}
\DeclareMathOperator{\el}{el}
\newtheorem{theorem}{Theorem}[section]
\newtheorem{lemma}[theorem]{Lemma}
\newtheorem{corollary}[theorem]{Corollary}
\newtheorem{prop}[theorem]{Proposition}
\theoremstyle{definition}
\newtheorem{definition}[theorem]{Definition}
\newtheorem{remark}[theorem]{Remark}
\numberwithin{equation}{section}
\title{G-kernels and Crossed Modules}
\author{Sergio Girón Pacheco}
\address{\hskip-\parindent Sergio Girón Pacheco, Department of mathematics, KU Leuven, Celestijnenlaan 200B, 3001, Leuven, Belgium.}
\email{sergio.gironpacheco@kuleuven.be}
\author{Masaki Izumi}
\address{\hskip-\parindent Masaki Izumi, Graduate School of Science, Kyoto University, Sakyo-ku, Kyoto 606-8502, Japan}
\email{izumi@math.kyoto-u.ac.uk}
\author{Ulrich Pennig}
\address{\hskip-\parindent Ulrich Pennig, School of Mathematics, Cardiff University, Cardiff, CF24 4AG, Wales, UK}
\email{pennigu@cardiff.ac.uk}
\date{}
\begin{document}

\maketitle

\begin{abstract}
We develop a unified framework based on topological crossed modules for various lifting obstructions for $\Gamma$-kernels. It allows us to identify actions, cocycle actions and $\Gamma$-kernels up to their natural equivalence relations with cohomology sets. The obstructions then appear as boundary maps in corresponding exact sequences. Since topological crossed modules are topological $2$-groups (in the categorical sense), they have classifying spaces, which come with a natural transformation from the cohomology to a homotopy set. For the crossed module that gives cocycle actions we prove a weak equivalence of the classifying space of the crossed module with one from bundle theory. In case the algebra is strongly self-absorbing we show that the homotopy set is a group and that the above natural transformation is a group isomorphism on an appropriate restriction of the cohomology set.
\end{abstract}

\section*{Introduction}
An action of a discrete group $\Gamma$ on a $C^*$- or von Neumann algebra $A$ is a group homomorphism $\alpha \colon \Gamma \to \Aut{A}$. Several weaker notions of this definition have played a pivotal role in the classification of group actions on these algebras. A $\Gamma$-kernel, for example, is a group homomorphism
\[
    \bar{\alpha} \colon \Gamma \to \Out{A}\ ,
\] 
where $\Out{A} = \Aut{A}/\Inn{A}$ and $\Inn{A}$ denotes the normal subgroup of inner automorphisms, i.e.\ those that are of the form $\Ad_{u}$ for $u \in U(M(A))$.\footnote{Usually these are called $G$-kernels in the literature, as $G$ is the preferred symbol for a group. However we prefer to denote a discrete group by $\Gamma$.} In a similar spirit cocycle actions consist of pairs of maps $(\alpha,u)$ with
\[
    \alpha \colon \Gamma \to \Aut{A} \quad, \quad u \colon \Gamma \times \Gamma \to U(M(A))\ ,
\]
where $u$ measures the defect of $\alpha$ being a group homomorphism and satisfies a cocycle condition arising from associativity.

The problem of classifying group actions, $\Gamma$-kernels and cocycle actions on operator algebras has an extensive history. In the case of von Neumann algebras, early remarkable results have been achieved by Connes \cite{paper:ConnesOutR, paper:ConnesPeriodic}, Jones~\cite{paper:JonesActions} and Ocneanu~\cite{paper:OcneanuActions}. Showing for example that there is a unique pointwise outer action of any countable, amenable group $\Gamma$ on the Hyperfinite type {II}$_1$ factor $\cR$ up to outer conjugacy. More generally Connes, Jones and Ocneanu showed that injective $\Gamma$-kernel on $\cR$ are classified up to conjugacy by their lifting obstruction. For a $\Gamma$-kernel $\alpha$, its lifting obstruction $\ob(\alpha)\in H^3(\Gamma,\bT)$, is precisely the obstruction to lifting $\alpha$ to a cocycle action. Katayama, Sutherland and Takesaki have extended these results to other injective factors \cite{paper:KatayamaTakesaki1,paper:KatayamaTakesaki2,paper:KatayamaTakesaki3,KATASU98}. 


 \par In the context of group actions on C$^*$-algebras, early classification results were achieved in the presence of the Rokhlin property (\cite{paper:HermanJones, paper:HermanOcneanu, paper:IzumiRokhlin1, paper:IzumiRokhlin2, paper:Kishimoto4} see also \cite{IZ10} for a detailed survey). Although the Rokhlin property implies strong K-theoretical constraints in general, these often vanish in the case of torsion-free groups. For Kirchberg algebras, Nakamura proved in \cite{paper:NakamuraAperiodic} that aperiodic automorphisms automatically satisfy the Rokhlin property and are completely classified by their $KK$-classes up to $KK$-trivial cocycle conjugacy. Building on Nakamura's work, the second named author together with Matui developed a classification of a large class of $\Z^2$-actions on Kirchberg algebras \cite{paper:IzumiMatuiZ2}, which they later extended to Poly-$\Z$-group actions~\cite{IZMA21,paper:IzumiMatuiPolyZ2}.
 \par Motivated by their partial results, in \cite[Conjecture 1.1]{paper:IzumiMatuiPolyZ2} Matui and the second named author conjectured that for a torsion-free countable discrete amenable group $\Gamma$ and a stable Kirchberg algebra $A$ the transformation
\begin{align} \label{eqn:OABGamma}
    \mathcal{OA}(\Gamma,A)/\!\!\sim_{cc}\ &\to\ [B\Gamma, B\Aut{A}]\\
    [\alpha]_{cc} &\mapsto\ [B\alpha]_h\notag
\end{align}
is a bijection, where the left hand side denotes the outer actions of $\Gamma$ on $A$ up to cocycle conjugacy $\sim_{cc}$. The map above arises as follows: for every topological group $G$ there exists a universal principal $G$-bundle $EG \to BG$ over the classifying space $BG$ such that the pullback gives a bijection between the homotopy set $[X,BG]$ and the isomorphism classes of numerable principal $G$-bundles over $X$. A group action $\alpha \colon \Gamma \to \Aut{A}$ on a $C^*$-algebra $A$ gives rise to a principal $\Aut{A}$-bundle 
\[
    E\Gamma \times_{\alpha} \Aut{A} \to B\Gamma
\]
that corresponds to a map $B\alpha \colon B\Gamma \to B\Aut{A}$ (unique up to homotopy).
 

 This conjecture was recently resolved affirmatively by combining the dynamical Kirchberg-Phillips theorem of Gabe and Szabo \cite[Theorem 6.2]{paper:GabeSzabo} with Meyer's work \cite[Theorem 3.10]{paper:MeyerKK}. Therefore confirming the fundamental importance of the homotopy type of $B\Aut{A}$ in understanding the structure of group actions on $A$.

In the case that $A = D \otimes \bK$ for a strongly self-absorbing Kirchberg algebra $D$, the space $\Aut{A}$ is an infinite loop space by the main result in \cite{DAPE16}, which implies that the right hand side of \eqref{eqn:OABGamma} is the first group in a generalised cohomology theory evaluated at $B\Gamma$. The group structure arises from the fibrewise tensor product of $C^*$-algebra bundles with fibre $D \otimes \bK$. 
The cohomology theory $E^*_D(X)$ with $E^1_D(X) = [X, B\Aut{D \otimes \bK}]$ obtained from the infinite loop space structure on $\Aut{D\otimes \bK}$, was identified in \cite{paper:DP-Units} as the one associated to the unit spectrum $gl_1(KU^D)$ of $K$-theory with coefficients in $K_0(D)$. This shows that even in the case of group actions on Kirchberg algebras the classification invariants have a much more intricate structure than in the case of von Neumann algebras. 

Further evidence of this was provided in \cite{paper:EvingtonPacheco} where Evington and the first named author studied $\Gamma$-kernels (called anomalous actions in that paper) on $C^*$-algebras classified by the Elliott programme (hereinafter called classifiable). They showed that all $\Gamma$-kernels on the Jiang-Su algebra $\mathcal{Z}$ have vanishing lifting obstruction and therefore arise from cocycle actions. Moreover, the lifting obstruction satisfies strong divisibility restrictions in the case of UHF-algebras by \cite[Theorem B]{paper:EvingtonPacheco}. Since these algebras can be seen as analogues of $\cR$ in the category of nuclear $C^*$-algebras, these results show that there is no straightforward generalisation of the von Neumann algebraic classification for $\Gamma$-kernels. 

Motivated by the work in \cite{paper:EvingtonPacheco} the second named author showed further restrictions to the possible values of lifting obstructions for $\Gamma$-kernels on classifiable C$^*$-algebras (\cite{IZ23}). For example showing that any $\Gamma$-kernel on $\cO_\infty$ for a finite group $\Gamma$ has trivial lifting obstruction. Both of the results in \cite{paper:EvingtonPacheco} and \cite{IZ23} arise by introducing new invariants of $\Gamma$-kernels of K-theoretic flavour. The restrictions to the values of the lifting obstruction then arise due to the relations between these invariants and the lifting obstruction. Moreover, in \cite{IZ23} the second named author also classified $\Z^n$-kernels on strongly self-absorbing Kirchberg algebras by making use of his newly introduced invariant.

The goal of this paper is to develop a unified picture to handle $\Gamma$-kernels, group actions and cocycle actions up to their respective natural equivalence relations. This picture will be cohomological by nature and all known $\Gamma$-kernel invariants appear as boundary maps in exact sequences of cohomology sets. We draw on tools from homotopical algebra developed by Whitehead \cite[Def.~(2.1)]{paper:Whitehead2}: Every unital $C^*$-algebra $A$ has several topological crossed modules (see \cref{def:crossed_mod}) associated to it. A list is shown in the first two columns of \cref{tab:list_cros_mod}.
\begin{table}[ht]
    \centering
    \begin{tabular}{|c|c|c|}
         \hline
         \multicolumn{2}{|c|}{Crossed Module} & $H^1(\Gamma, \cG)$ \\[1mm]
         \hline
         & $1 \to \Aut{A}$ & $\{ \text{actions of $\Gamma$ on $A$} \}/\sim_c$ \\[1mm]  
         $\cG_A$ & $U(A) \to \Aut{A}$  & $\{\text{cocycle actions of  $\Gamma$ on $A$} \}/\sim_{\text{c.c}}$ \\[1mm]
         $P\cG_A$ & $PU(A) \to \Aut{A}$ & $\{\text{$\Gamma$-kernels on $A$} \}/\sim_{\text{c}}$  \\[1mm]
         $\wcG_A$ & $\widetilde{U}(A) \to \Aut{A}$ &  \\[1mm]
         $S\cG^\tau_A$ & $SU_\tau(A) \to \Autt{A}$ & \\[1mm]
         $P\cG^\tau_A$ & $PU(A) \to \Autt{A}$ & $\{\text{$\tau$-preserving $\Gamma$-kernels on $A$} \}/ \sim_{\tau.\text{c}}$ \\
         \hline
    \end{tabular}
    \vspace{2mm}
    \caption{\label{tab:list_cros_mod}\ Crossed modules associated to unital $C^*$-algebras, the last two require the existence of a trace $\tau$ on $A$.}
    \vspace{-6mm}
\end{table}

Where, assuming that $A$ has connected unitary group, $\widetilde{U}(A)$ denotes the universal cover of $U(A)$ and $SU_\tau(A)$ coincides with the kernel of the de la Harpe--Skandalis determinant equipped with an exponential length topology in the spirit of \cite{CHRO23} (see Section \ref{sec:SUA}). The low-degree group cohomology of $\Gamma$ with coefficients in a $\Gamma$-module generalises to cohomology with coefficients in a crossed module (see \cref{def:cohomology}). For the crossed modules shown in \cref{tab:list_cros_mod}, the first cohomology often has an interpretation in terms of $C^*$-dynamics. Examples of this are shown in the final column. While ordinary group cohomology gives abelian groups, it should be pointed out that $H^1(\Gamma, \cG)$ is in general only a pointed set. However, studying the interplay between the crossed modules leads to exact sequences of pointed sets that encode lifting obstructions. If $A$ is a unital $C^*$-algebra with $Z(A) = \C$ then the quotient map $\cG_A \to P\cG_A$ produces
\[
\begin{tikzcd}
    \ar[r] & H^2(\Gamma, \bT) \ar[r] & H^1(\Gamma, \cG_A) \ar[r] & H^1(\Gamma, P\cG_A) \ar[r,"\ob"] & H^3(\Gamma, \bT)\ ,
\end{tikzcd}
\]
where $\ob$ indeed coincides with the lifting obstruction from $\Gamma$-kernels to cocycle actions. Following this example, we recover in our first main result \cref{thm:operatoralgcrossedmoduleseq} not only the standard lifting obstruction outlined above, but also the one from \cite[Section 3]{IZ23} and the tracial lifting obstruction from \cite{paper:EvingtonPacheco}. 

Crossed modules are inherently 2-categorical objects. We will recall in \cref{sec:ClassifyingSpaces} how to associate a strict topological $2$-category with a single object and invertible $1$- and $2$-morphisms to a topological crossed module. These structures are known under the name topological $2$-groups \cite{paper:BaezStevenson} or as topological categorical groups in the category theory literature. With $2$-morphisms at hand the natural maps between $2$-categories turn out to be pseudofunctors. These do not preserve composition on the nose, but only up to a natural transformation (sometimes called compositor). If we view a discrete group $\Gamma$ as a $2$-category $\cC_\Gamma$ with a single object, $\Gamma$ as $1$-morphisms and trivial $2$-morphisms, then pseudofunctors $\cC_\Gamma \to \cG_A$ are exactly cocycle actions. Naturally isomorphic pseudofunctors correspond to cocycle conjugate actions. 

Just as ordinary groups, $2$-groups have classifying spaces. However, unlike in the $1$-categorical setting, for a crossed module $\cG$ there are several equally natural candidates for $B\cG$, two of which we study in this paper. The first one is based on the Duskin nerve $N_\ast^D(\cG)$, a simplicial space constructed from pseudofunctors $[n] \to \cG$, where $[n]$ denotes the poset category associated to the ordered set $\{0,\dots, n\}$. Its (fat) geometric realisation gives $B^D\cG$ (see \cref{def:Duskin_nerve}). The Duskin nerve of a $1$-category reproduces its ordinary nerve. We show in \cref{lem:map_on_BG} that functoriality with respect to pseudofunctors provides a transformation of pointed sets
\begin{equation}\label{eqn:intromap}
    H^1(\Gamma, \cG) \to [B\Gamma, B^D\cG]
\end{equation}
that is natural in $\Gamma$ and in $\cG$. We may also view the $2$-category associated to $\cG$ as a strict monoidal $1$-category $\cG_\otimes$, in which every object has an inverse. If the topology on $\cG$ is not too pathological, then the (thin) geometric realisation of the nerve $N_\ast\cG_\otimes$ is a monoid. We define the monoidal classifying space $B^\otimes \cG$ to be its geometric realisation (see \cref{def:monoidal_BG}). This gives another space that could be called the classifying space of $\cG$. For discrete $2$-categories it was shown in \cite{BUCE03} that $B^D\cG \simeq B^\otimes \cG$. It is straightforward to generalise their proof to the topological case; we do so in \cref{appendix}.

While  $B^D\cG$ is directly related to the cohomology set $H^1(\Gamma, \cG)$, the weak equivalence with $B^\otimes \cG$ allows us to determine its weak homotopy type. Combining the main result in \cref{appendix} with \cref{thm:weakhomeq} we obtain weak homotopy equivalences
\[
    B^D\cG_A \simeq B^\otimes\cG_A \simeq \cB\Autz{A \otimes \bK}\ .
\]
The group $\Autz{A \otimes \bK}$ is the group of the automorphism $\alpha$ such that the projections $\alpha(1\otimes e)$ and $1\otimes e$ are homotopic for a rank one projection $e\in \bK$ (this coincides with the connected component of the identity in $\Aut{A\otimes \bK}$ when $A$ is strongly self-absorbing). The right-hand side above denotes the classifying space of principal $\Autz{A\otimes \bK}$-bundles, obtained as the fat geometric realisation of the bar construction. For strongly self-absorbing $C^*$-algebras~$A$ the set $[B\Gamma, B^D \cG_A]$ has a natural group structure and the transformation $H^1_{ff}(\Gamma, \cG_A) \to [B\Gamma, B^D\cG_A]$ is a group isomorphism as we show in \cref{thm:BG_group_hom}, where $H^1_{ff}(\Gamma, \cG_A)$ is the restriction to fully faithful cocycles (see Definition \ref{def:ffcohomology}). 

We speculate that for strongly self-absorbing $C^*$-algebras $A$ the spaces $B^D\cG_A$, $B^DP\cG_A$, $B^D\wcG_A$, $B^DS\cG^\tau_A$ and $B^DP\cG^\tau_A$ will all be infinite loop spaces and that the transformation \eqref{eqn:intromap} will be a group homomorphism for each of these cases. Their infinite loop space structure would lead to interrelated cohomology theories that provide receptacles for topological invariants of $\Gamma$-kernels on $A$. 

\par The paper is structured as follows. In Section \ref{sec:prelim} we recall the notion of $\Gamma$-kernels, their invariants and the classifying space construction for topological groups. In Section \ref{sec:SUA} we introduce the special unitary group associated to each trace and show its relevant properties. In Section \ref{sec:crossedmodules} we introduce the operator algebraic crossed modules and show that every invariant of $\Gamma$-kernels arises as boundary maps in exact sequences of their cohomology sets. In Section \ref{sec:ClassifyingSpaces} we construct the map \eqref{eqn:intromap} and show the weak homotopy equivalence $B^{\otimes}\cG_A\simeq \cB\Autz{A\otimes\bK}$ along with its implications in the strongly self-absorbing case. In Section \ref{sec:covSU} we revisit $SU_\tau(A)$ and show that, under some structural assumptions on $A$, $\covU{A}\cong \R\times SU_\tau(A)$.\footnote{This is analogous to the well-known isomorphism $\covU{n}\cong \R\times SU(n)$.} Finally, in Appendix \ref{appendix} we show the weak homotopy equivalence $B^D\cG\simeq B^{\otimes}\cG$.
\subsection*{Acknowledgements}
The authors' would like to thank Christian Voigt for interesting discussions. Part of this work was completed during the authors' stay in the International Centre for Mathematical Sciences (ICMS) for the \emph{Twinned Conference on $C^*$-Algebras and Tensor Categories} in November 2023. We thank ICMS and the organisers for the hospitality. The second and third author would also like to thank the Isaac Newton Institute for Mathematical Sciences, Cambridge, for support and hospitality during the programme \emph{Topological groupoids and their $C^*$-algebras}, where work on this paper was undertaken. This work was supported by EPSRC grant EP/V521929/1.

\par MI was supported by JSPS KAKENHI Grant Number JP25K00912. SGP was supported by projects G085020N and 1249225N funded by the Research Foundation Flanders (FWO). 
\section{Preliminaries}\label{sec:prelim}
Throughout this paper $A$ will be a unital, separable C$^*$-algebra with $A^{sa}$ its real subspace of self-adjoint elements and $Z(A)$ its centre. We denote its unitary group by $U(A)$ and its set of projections by $P(A)$. The connected component of the identity in $U(A)$ will be denoted by $U(A)_0$. We denote the automorphism group of $A$ by $\Aut A$ and our convention is that $\Ad \colon U(A)\rightarrow \Aut A $ is the group homomorphism defined by $\Ad_u(a)=uau^*$ for $a\in A$. Any automorphism of $A$ in the image of $\Ad$ is called \emph{inner} and forms a normal subgroup of $\Aut{A}$ denoted by $\Inn{A}$. We denote the quotient group $\Aut{A}/\Inn{A}$ by $\Out{A}$. We denote the space of tracial states of $A$ by $T(A)$. Whenever $\tau\in T(A)$ we will denote by $\Autt A$ the subgroup of $\Aut A$ of automorphisms that fix $\tau$.
\par Let $\Top$ be the category of compactly generated and weakly Hausdorff spaces. All topological spaces within this paper will be objects in $\Top$ and products will be formed within this category. For a group $G$ we will denote its centre by $ZG$ i.e. $ZG=\{g\in G: gh=hg\ \text{for all}\ h\in G\} $. If $(X,x_0)$ is a based topological space we denote by $\Omega_{x_0} X$ the space of continuous loops based at $x_0$. If $G$ is a topological group then $(G,1)$ is a based topological space and we denote $\Omega_1G$ simply by $\Omega G$, this is also a topological group under the pointwise operations. 
\par We define a \emph{short exact sequence of topological groups} to be an exact sequence of groups
\[
1\rightarrow H\rightarrow G\rightarrow K\rightarrow 1
\]
such that the connecting maps are continous and the map $G\rightarrow K$ is a Hurewicz fibration.
\subsection{The Skandalis de la Harpe determinants}
We recall the de la Harpe--Skandalis determinants; the reader is referred to \cite{SkdlH} for details. Let $A$ be a unital C$^*$-algebra. We denote by $U_n(A)=U(M_n(A))$ and by $U_{\infty}(A)=\bigcup_{n\in \N} U_n(A)$ where we view $U_n(A)\subset U_{n+1}(A)$ under the embedding $u\mapsto u\oplus 1_A$. We topologise the group $U_{\infty}(A)$ with the direct limit topology. We warn the reader that $U_{\infty}(A)$ endowed with this topology may not be a topological group. However, note that every compact subset $K\subset U_\infty(A)$ is contained in $U_n(A)$ for some $n\in \N$ (\cite[Lemma 1.7]{GLO05}) and so continuous paths $[0,1]\rightarrow U_{\infty}(A)$ and homotopies $[0,1]^2\rightarrow U_{\infty}(A)$ factor through $U_n(A)$ for some $n\in\N$.  We will denote by $U_{\infty}^{(0)}(A)$ the connnected component of $1_A$ in $U_{\infty}(A)$. For $\lambda\in \R$ we will often denote the path $e^{2\pi i\lambda t}$ for $t\in [0,1]$ by $e_{\lambda}$.
\par We denote by $C^{1}_{*}([0,1],U_{\infty}(A))$ the group of continuously differentiable paths $\gamma:[0,1]\rightarrow U_{\infty}(A)$  with $\gamma(0)=1_A$. Let $\tau:A\rightarrow E$ be a tracial continuous linear map into a Banach space $E$. Precisely $\tau$ is a bounded linear map and $\tau(ab)=\tau(ba)$ for all $a,b\in A$. In \cite{SkdlH} de la Harpe and Skandalis define a mapping $\tilde{\Delta}_\tau:C^{1}_{*}([0,1],U_{\infty}(A))\rightarrow E$ by
\begin{equation*}
    \tilde{\Delta}_\tau(\gamma)=\frac{1}{2\pi i}\int_{0}^1\tau(\gamma(t)^{-1}\gamma'(t))dt.\footnote{the trace $\tau$ is extended to $A= (a_{ij})\in M_n(A)$ by $\tau(A)=\sum_{k=1}^n\tau(a_{kk})$.}
\end{equation*}
In \cite[Lemme 1]{SkdlH} it is shown that $\tilde{\Delta}_\tau$ is a homomorphism of groups and that $\tilde{\Delta}_\tau(\gamma)=\tilde{\Delta}_\tau(f)$ for any two paths $f$ and $\gamma$ that are path homotopic to one another. Any two paths $f,\gamma\in C^{\infty}_{*}([0,1],U_{\infty}(A))$ with the same endpoint differ by a smooth loop in $U_{\infty}(A)$ based at $1_A$. Moreover, by Bott periodicity, any such loop is path homotopic to a loop of the form $\lambda_{p,q}(t)=e^{2\pi i tp}e^{-2\pi i tq}$ for some projections $p,q\in P_n(A):=P(M_n(A))$. Thus one defines the Skandalis de la Harpe determinant $\Delta_\tau: U_{\infty}^{(0)}(A)\rightarrow E/\tau_{*}(K_0(A))$ by picking for any $u\in U_{\infty}^{(0)}(A)$ a $C^{\infty}$ path $\xi$ with $\xi(0)=1_A$ and $\xi(1)=u$ and letting
\begin{equation*}
    \Delta_\tau(u)=\tilde{\Delta}_\tau(\xi)+\tau_{*}(K_0(A)).
\end{equation*}
Which is well defined group homomorphism by the preceding discussion. Note that when $u\in U^{(0)}_{\infty}(A)$ then $u=e^{ih_1}\ldots e^{ih_n}$ for $h_i\in M_n(A^{sa})$ for some large enough $n$. Choosing the path $u(t)=e^{ith_1}\ldots e^{ith_n}$ shows that $\Delta_\tau(u)=\frac{1}{2\pi}\sum_{i=1}^n\tau(h_i)$. We will be mainly interested in the case that $\tau:A\rightarrow \C$ is a tracial state. In this case, as $\tau(a^*)=\overline{\tau(a)}$ for any $a\in A$, the image of $\Delta_\tau$ is contained in $\R/\tau_{*}(K_0(A))$.
\subsection{$G$-kernels}
In this section we recall some invariants associated to $\Gamma$-kernels on C$^*$-algebras. We will refer to \cite{thesis:sergio} or \cite{IZ23} for more details on these constructions and the relevant proofs.
\par Let $\Gamma$ be a countable discrete group. A \emph{$\Gamma$-kernel} is a group homomorphism $\alpha:\Gamma\rightarrow \Out A$. To any $\Gamma$-kernel one can associate an invariant often called the \emph{lifting obstruction} of $\alpha$. We briefly recall this construction. Pick a set theoretic lifting $\tilde{\alpha}:\Gamma \rightarrow \Aut A $ of $\alpha$ and $u_{g,h}\in U(A)$ such that
\[
\tilde{\alpha}_g\tilde{\alpha}_h=\Ad_{u_{g,h}}\tilde{\alpha}_{gh}\ \text{for all}\ g,h\in \Gamma.
\]
We call the pair $(\tilde{\alpha},u)$ a \emph{lifting} of $\alpha$. It follows from the associativity of $\Aut A$ that 
\[
\tilde{\alpha}_{g}(u_{h,k})u_{g,hk}u_{gh,k}^*u_{g,h}^*=\omega_{g,h,k}\in \ker(\Ad)=ZU(A).
\]
One can show that $\omega$ satisfies the $3$-cocycle relation and that the cohomology class $[\omega]\in H^3(\Gamma,ZU(A))$ is independent of the choices of $\tilde{\alpha}$ and $u$. Where $ZU(A)$ is equipped with the $\Gamma$-module structure 
\[
g\cdot a=\tilde{\alpha}_g(a)
\]
for all $a\in ZU(A)$ and $g\in \Gamma$.
\begin{definition}\label{def:obalpha}
    Let $\alpha:\Gamma\rightarrow \Out A$ be a $\Gamma$-kernel with lifting $(\tilde{\alpha},u)$. We denote by $\ob(\alpha)$ the class in $H^3(\Gamma,ZU(A))$ defined by $\tilde{\alpha}_{g}(u_{h,k})u_{g,hk}u_{gh,k}^*u_{g,h}^*$ which is independent of the choice of lifting.
\end{definition}
\par In \cite{IZ23} the second named author introduces another two invariants of similar flavour. Let's assume for the remainder of this section that the unitary group of $A$ is path connected, i.e. $U(A)=U(A)_0$. Consider the group
\[
K_0^\#(A):=\{\gamma\in C([0,1],U(A))\ |\ \gamma(0)=1,\ \gamma(1)\in ZU(A)\}/\Omega_0 U(A)
\]
where $\Omega_0 U(A)$ is the subgroup of $C([0,1],U(A))$ consisting of loops in $U(A)$ based at $1_A$ that are path homotopic to the constant loop. The group $K_0^\#(A)$ is abelian under pointwise multiplication. Moreover, any $\Gamma$-kernel $\alpha:\Gamma\rightarrow \Out A $ induces a module structure on $K_0^\#(A)$ by choosing a lifting $\tilde{\alpha}:\Gamma\rightarrow \Aut A$ and pointwise evaluating $\tilde{\alpha}$ on an equivalence class of a path $\gamma\in K_0^\#(A)$ (see \cite[Section 5.2]{thesis:sergio}).
\begin{definition}\label{def:tildeobalpha}{(cf\ \cite[Definition 3.5]{IZ23})}
    Let $\alpha:\Gamma\rightarrow \Out A$ be a $\Gamma$-kernel with lifting $(\tilde{\alpha},u)$ and $\tilde{u}_{g,h}$ a choice of continuous paths starting at $1$ and ending at $u_{g,h}$ for each $g,h\in \Gamma$. We denote by $\widetilde{\ob}(\alpha)$ the class in $H^3(\Gamma,K_0^\#(A))$ defined by $\tilde{\alpha}_{g}(\tilde{u}_{h,k})\tilde{u}_{g,hk}\tilde{u}_{gh,k}^*\tilde{u}_{g,h}^*$. This is independent of the choice of lifting and path $\tilde{u}$.
\end{definition}
\begin{remark}
There is a precursor to $\widetilde{\ob}$ in the setting of cocycle actions. For a discrete group $\Gamma$, a pair $\alpha:\Gamma\rightarrow \Aut A$ and $u:\Gamma\times \Gamma\rightarrow U(A)$ such that 
\begin{align}
\alpha_g\alpha_h =\Ad_{u_{g,h}}\alpha_{gh}\\
\alpha_g(u_{h,k})u_{g,hk}=u_{g,h}u_{gh,k}\label{eqn:cocycleaction}
\end{align}
is called a \emph{cocycle action}.\footnote{Note that this is simply a lift of a $\Gamma$-kernel with trivial associated $3$-cocycle.} In \cite{IZMA21} the authors introduce an invariant $\kappa^3(\alpha,u)\in H^3(\Gamma,\pi_1(U(A)))$ by picking continuous paths $\tilde{u}_{g,h}$ starting at $1$ and ending at $u_{g,h}$ and setting $\kappa^3(\alpha,u)=\tilde{\alpha}_{g}(\tilde{u}_{h,k})\tilde{u}_{g,hk}\tilde{u}_{gh,k}^*\tilde{u}_{g,h}^*$ which is now valued in $\pi_1(U(A)))$ by \eqref{eqn:cocycleaction}.
\end{remark}
The natural notion of equivalence for $\Gamma$-kernels is that of conjugacy. We say two $\Gamma$-kernels $\alpha,\beta:\Gamma\rightarrow \Out A$ are \emph{conjugate} if there exists an automorphism $\varphi\in \Aut A$ such that $\varphi\alpha_g\varphi^{-1}=\beta_g\in \Out A$ for all $g\in G$, denoted $\alpha\sim_{c}\beta$. However, cocycle actions are usually considered up to cocycle conjugacy. Two cocycle morphisms $(\alpha,u)$ and $(\beta,v)$ are said to be \emph{cocycle conjugate}, denoted $(\alpha,u)\sim_{c.c} (\beta,v)$, if there exists an automorphism $\varphi\in \Aut A$ and a map $w:\Gamma\rightarrow U(A)$ such that
\begin{align}
    \Ad_{w_g}\varphi\alpha_g\varphi^{-1}&=\beta_g,\\
    w_{gh}\varphi(u_{g,h})w_g^*\beta_g(w_h^*)&=v_{g,h}
\end{align}
for all $g,h\in \Gamma$.
\par For the final invariant let $\tau\in  T(A)$. If $\alpha:\Gamma\rightarrow \Out A $ is a $\Gamma$-kernel on $A$ that preserves $\tau$, then one can always choose a lifting $(\tilde{\alpha},u)$ such that $\Delta_\tau(u)=0$; one may simply replace $u$ by $ue^{-2\pi\Delta_{\tau}(u)i}$ for any arbitrary lifting. For such a choice $\tilde{\alpha}_{g}(u_{h,k})u_{g,hk}u_{gh,k}^*u_{g,h}^*$ is now valued in $\ker(\Delta_\tau)$.
\begin{definition}
     Let $\tau\in T(A)$ and $\alpha:\Gamma\rightarrow \Out A$ a $\Gamma$-kernel that preserves $\tau$. Let $(\tilde{\alpha},u)$ be a lifting of $\alpha$ with $u\in \ker(\Delta_\tau)$. We denote by $\ob_\tau(\alpha)$ the class in $H^3(\Gamma,Z\ker(\Delta_\tau))$ defined by $\tilde{\alpha}_{g}(u_{h,k})u_{g,hk}u_{gh,k}^*u_{g,h}^*$ which is independent of the choice of lifting.\footnote{That $\ob_\tau$ is well-defined is only shown in \cite{IZ23} in the case that $ZU(A)=\bT$ but the proof follows in precisely the same way without this assumption.}
\end{definition}
Two $\tau$-preserving $\Gamma$-kernels are said to be $\tau$-conjugate (denoted $\sim_{\tau.c}$) if they are conjugate by an automorphism that fixes $\tau$.

\par Note that the above invariants may not be preserved up to conjugacy. For example, if $\alpha:\Gamma\rightarrow \Out A $ is a $\Gamma$-kernel and $\varphi\in \Aut A $ then $\ob(\varphi\alpha\varphi^{-1})=\varphi^*(\ob(\alpha))$ where 
    \begin{equation}\label{eqn: inducedmap}
    \varphi^*:H^3(\Gamma,ZU(A))\rightarrow H^3(\Gamma,\varphi^*(ZU(A)))
    \end{equation}
    
    is the induced map in cohomology. To introduce a well defined invariant for conjugacy classes of $\Gamma$-kernels we consider the direct sum $\bigoplus H^3(\Gamma,ZU(A))$ over all $\Gamma$-module structures on $Z(U(A))$ descending from $\Gamma$-kernels on $A$ and let
    \[\bigoplus_{\sim \Out A} H^3(\Gamma, ZU(A))
    \]the quotient by the canonical action of $\Out A$ on $\bigoplus H^3(\Gamma, ZU(A))$ given by \eqref{eqn: inducedmap}. The invariant $\ob$ now descends to a well defined mapping
    \begin{equation}\label{eqn: obmap}
    \{\Gamma\text{-kernels}\}/\sim_{c}\xrightarrow{\ob} \bigoplus_{\sim \Out A} H^3(\Gamma, ZU(A)).
    \end{equation}
     Similarly we get well defined maps
     \begin{align}
    \{\Gamma\text{-kernels}\}/\sim_{ c}&\xrightarrow{\widetilde{\ob}} \bigoplus_{\sim \Out A} H^3(\Gamma, K_{0}^\#(A))\label{eqn: tildeobmap}\\
    \{\tau\text{-preserving}\ \Gamma\text{-kernels}\}/\sim_{ \tau. c.}&\xrightarrow{\ob_\tau} \bigoplus_{\sim \Outt A} H^3(\Gamma, Z\ker\Delta_\tau). \label{eqn: tauobmap}
    \end{align}
    \begin{remark}\label{rmk:simplificationsobs}
    We will mostly care about the case that $A$ has centre $\C$, then the right hand sides of \eqref{eqn: obmap} and \eqref{eqn: tauobmap} are simply identified with $H^3(\Gamma,\bT)$ and $H^3(\Gamma,\tau(K_0(A))/\Z)$ with the coefficient modules carrying the trivial $\Gamma$-module structures. Also, when $A$ has a unique trace $\tau$, every $\Gamma$-kernel is automatically $\tau$-preserving. Note though that even when $A$ has trivial centre the induced module structure on $K_0^{\#}(A)$ may still be non-trivial. By \cite[Proposition 5.1.3]{thesis:sergio} (see also \cite[Lemma 3.4]{IZ23}) there is an isomorphism of $\Gamma$-modules
    \[
    K_0^{\#}(A)\cong \frac{\R \oplus \pi_1(U(A))}{\Z(-1,[e_1])}.
    \]
    If all automorphisms of $A$ are approximately inner, then the induced action on $\pi_1(U(A))$, and hence on $K_0^{\#}(A)$, is trivial for any $\Gamma$-kernel. Indeed, for an automorphism $\alpha\in \Aut A $ and $\tilde{u}\in \Omega U(A)$, there exists a unitary $v\in U(A)$ such that $\|\alpha(\tilde{u})-v\tilde{u}v^*\|<1$. Thus, $\alpha(\tilde{u})$ and $v\tilde{u}v^*$ are homotopic in $\Omega U(A)$. Also, as $v\in U(A)_{0}$, then the loop $v\tilde{u}v^*$ is homotopic to $\tilde{u}$ in $\Omega U(A)$. Thus all automorphism induce the trivial map on $\pi_1(U(A))$. In this case, the right hand side of \eqref{eqn: tildeobmap} reduces to $H^3(\Gamma, K_0^{\#}(A))$ with the trivial $\Gamma$-module structure. The assumptions that $A$ has trivial centre and that all its automorphisms are approximately inner are satisfied for all strongly self-absorbing C$^*$-algebras. This is the class of C$^*$-algebras predominantly considered in \cite{IZ23}. 
    \end{remark}

\subsection{Categories, simplicial spaces and classifying spaces}\label{subsec:simplicalspaces}
In this section we will recall how to turn a topological category into a simplicial space via the nerve construction. The geometric realisation of the nerve is known as the classifying space of the category. In case of a discrete group, considered as a one-object category, the cohomology of the classifying space is isomorphic to the algebraically defined group cohomology. 
\begin{definition} \label{def:simp_cat}
    For $n \in \N_0$ let $[n] = \{0, \dots, n\}$ considered as a linearly ordered set. The sets $[n]$ are the objects of the \emph{simplex category} $\Delta$. Its morphisms are the monotone functions $[n] \to [m]$. We will also need the subcategory $\Deltainj \subset \Delta$, which has the same objects, but whose morphisms are the injective monotone maps.
\end{definition}
The morphisms in $\Delta$ are generated by the coface maps $\partial_i \colon [n-1] \to [n]$ and $\sigma_i \colon [n+1] \to [n]$ with
\[
    \partial_i(k) = \begin{cases}
        k & \text{if } k < i \ ,\\
        k+1 & \text{if } k \geq i
    \end{cases}
    \quad \text{and} \quad
    \sigma_i(k) = \begin{cases}
        k & \text{if } k \leq i \ , \\
        k-1 & \text{if } k > i\ .
    \end{cases}
\]
Denote by $\Top$ the category of compactly generated Hausdorff spaces. 
\begin{definition} \label{def:simp_space}
    A \emph{simplicial space} $X$ is a presheaf on $\Delta$ with values in $\Top$, i.e.\ it is a functor $X \colon \Delta^{\rm op} \to \Top$. We will often denote $X([n])$ by $X_n$ and the simplicial space $X$ by $X_\ast$. Moreover, $d_i = X(\partial_i) \colon X_n \to X_{n-1}$ are called the \emph{face maps} of $X_\ast$ and $s_i = X(\sigma_i) \colon X_n \to X_{n+1}$ are the \emph{degeneracy maps}. A \emph{simplicial map} $f \colon X_\ast \to Y_\ast$ of simplicial spaces $X_\ast$ and $Y_\ast$ is a continuous natural transformation between the functors $X,Y \colon \Delta^{\rm op} \to \Top$. We denote the category consisting of simplicial spaces and simplicial maps between them $\Top^{\Delta^{\rm op}}$.
\end{definition}
Since all morphisms in $\Delta$ are compositions of $\partial_i$'s and $\sigma_i$'s, the functor~$X$ is on morphisms completely determined by defining what it associates to those. Diagrammatically, this can be viewed as follows:
\[
\begin{tikzcd}
     \dots \ar[r,yshift=15pt] \ar[r,yshift=5pt] \ar[r,yshift=-5pt] \ar[r,yshift=-15pt] & \ar[l,yshift=-10pt] \ar[l,yshift=0pt] \ar[l,yshift=10pt] X_2 \ar[r,yshift=10pt] \ar[r,yshift=0pt] \ar[r,yshift=-10pt] & X_1  \ar[r,yshift=5pt] \ar[r,yshift=-5pt] \ar[l,yshift=5pt] \ar[l,yshift=-5pt] & \ar[l] X_0
\end{tikzcd}
\]
with face maps pointing to the right and degeneracies pointing to the left. If each $X_n$ is a discrete set, then it should be understood as the set of $n$-simplices. The $i$th face map gives the $i$th face of the simplex as an $(n-1)$-simplex. This ``blueprint'' for a topological space can then be turned into a real one by the geometric realisation.

Recall that the standard $p$-simplex $\Delta^p$ is the following subspace of $\R^{p+1}$:
\[
    \Delta^p = \left\{ (t_0, \dots, t_p) \in \R^{p+1}\ |\  \sum_{i=0}^p t_i = 1\text{ and } t_i \geq 0 \text{ for all } i \right\}
\]
Each morphism $\varphi \in \hom_{\Delta}([p], [q])$ induces a continuous map $\varphi_* \colon \Delta^p \to \Delta^q$ defined by 
\[
    \varphi_*(t_0, \dots, t_p) = (s_0, \dots, s_q) \quad \text{with} \quad s_j = \sum_{i \in \varphi^{-1}(j)} t_i\ .
\]
In particular, $(\partial_i)_*$ enters a $0$ in the $i$th coordinate and $(\sigma_i)_*$ adds the $i^{\text{th}}$ and $(i+1)^{\text{st}}$ coordinate.

\begin{definition} \label{def:geom_real}
The \emph{(thin) geometric realisation} of a simplicial space $X_\ast$ is the quotient space
\[
    \lvert X_\ast \rvert = \left( \coprod_{n} X_n \times \Delta^n \right)/\!\!\sim\ ,
\]
where $\sim$ is the equivalence relation generated by $(x, \varphi_*(t)) \sim (X(\varphi)(x), t)$ for all morphisms $\varphi$ in $\Delta$. The \emph{fat geometric realisation} is defined as 
\[
    \lVert X_\ast \rVert = \left( \coprod_{n} X_n \times \Delta^n \right)/\!\!\sim_f\ ,
\]
where $\sim_f$ is the equivalence relation generated by $(x, \varphi_*(t)) \sim_f (X(\varphi)(x), t)$, but only for the morphisms $\varphi$ in $\Deltainj$ (i.e.\ ignoring the degeneracy maps).
\end{definition}
Both of these constructions are functorial with respect to simplicial maps and therefore provide functors $\Top^{\Delta^{\rm op}} \to \Top$ from the category of simplicial spaces to topological spaces. We refer the reader to \cite[Section 11]{book:MayIterated}, \cite[Section 1.2]{EBWI19} and \cite[Appendix A]{SE74} for references about the geometric realisation functors.

While the fat geometric realisation often has the homotopy type we are after, the thin one is better behaved from a categorical viewpoint. For example, for the constant simplicial space (i.e.\ $X_n = Y$ for all $n \in \N_0$ and all structure maps equal to identities) we have 
\[
    \lvert X_\ast \rvert \cong Y \quad , \quad \lVert X_\ast \rVert \cong Y \times \Delta^\infty\ ,
\]
where $\Delta^\infty$ is an infinite-dimensional simplex. Moreover, the thin geometric realisation preserves products by \cite[Theorem 11.5]{book:MayIterated}, while this is true for the fat one only up to weak equivalence \cite[Theorem 7.2]{EBWI19}. An important consequence is that the thin geometric realisation of a simplicial topological group is again a topological group \cite[Corollary 11.7]{book:MayIterated}. The following cofibrancy conditions appear in the literature \cite[Definition A.4]{SE74}, :
\begin{definition} \label{def:good_proper}
    A simplicial space $X_\ast$ is called \emph{good} if all degeneracy maps $X_n \to X_{n+1}$ are closed cofibrations. It is called \emph{proper} (or \emph{Reedy cofibrant} if the inclusion $sX_n \to X_n$ with $sX_n = \bigcup_i X(\sigma_i)(X_{n-1})$ is a closed cofibration.
\end{definition}
If the simplicial space $X_\ast$ is good, then the quotient map
\(
    \lVert X_\ast \rVert \to \lvert X_\ast \rvert
\)
is an equivalence by \cite[Proposition A.1 (iv)]{SE74}. 

We will make extensive use of the following facts: 
If $f_\ast \colon X_\ast \to Y_\ast$ is a map of simplicial spaces such that $f_n \colon X_n \to Y_n$ is a weak homotopy equivalence for all $n \in \N_0$, then $\lVert f_\ast \rVert \colon \lVert X_\ast \rVert \to \lVert Y_\ast \rVert$ is a weak homotopy equivalence as well \cite[Theorem 2.2]{EBWI19}. Moreover, if $f_\ast$ is a map of proper simplicial spaces such that each $f_n$ is a homotopy equivalence, then $\lvert f_\ast \rvert$ is also a homotopy equivalence \cite[Theorem A.4]{MAY74}. In fact, a good simplicial space is also proper. This follows for example from \cite[Lemma A.6]{book:MayIterated}.

One of the main sources of simplicial spaces are topological categories:
\begin{definition} \label{def:nerve}
    Let $\cC$ be a topological category. The \emph{nerve} of $\cC$ is the simplicial space
    \[
        N_n\cC = \text{Fun}([n], \cC) \ ,
    \]
    where we consider $[n]$ as the poset category with a unique arrow $i \leftarrow j$ if and only if $i \leq j$.\footnote{The choice of direction for the arrows in the category was made in such a way that $(0 \leftarrow 1) \circ (1 \leftarrow 2) = (0 \leftarrow 2)$.} Each such functor is determined by a starting object and a chain of composable morphisms of length $n$. The topology is then fixed by considering $N_n\cC$ as a subspace of $\ob(\cC) \times \text{Mor}(\cC)^n$.
\end{definition}

Let $\Gamma$ be a topological group. It gives rise to a topological category $\cC_\Gamma$ with the one-point space as the objects and the group as the morphism space. Consider the spaces
\[
    \cB\Gamma = \lVert N_\ast \cC_{\Gamma} \rVert \quad, \quad B\Gamma = \lvert N_\ast \cC_\Gamma \rvert  \ .
\]
The space $\cB\Gamma$ is called the \emph{classifying space} of $\Gamma$. By \cite[Proposition 2]{paper:tomDieckBG} and the preceding remark there, this space is the one that appeared as $B\Gamma_{\N}$ in \cite[page 107]{paper:SegalBG}, which is homeomorphic to Milnor's classifying space construction (\cite{paper:MilnorBG}, \cite[Section 14.4.3]{book:TomDieckTopology}). Therefore the homotopy set $[X,\cB\Gamma]$ is in bijection with the set of isomorphism classes of numerable principal $\Gamma$-bundles over $X$. The space $B\Gamma$ is a special case of the bar construction. It is homeomorphic to $B(\ast, \Gamma, \ast)$ \cite[Section 7]{book:MayBGAndFib}. The space $E\Gamma = B(\ast, \Gamma, \Gamma)$ comes with a natural quotient map to $B\Gamma$. In case the group $\Gamma$ is \emph{well-pointed} in the sense that the inclusion $\{1\} \to \Gamma$ is a cofibration, the map $E\Gamma \to B\Gamma$ is a numerable principal $\Gamma$-bundle with contractible total space \cite[Theorem 8.2]{book:MayBGAndFib}. In this case $\cB\Gamma \to B\Gamma$ is a weak homotopy equivalence and $B\Gamma$ is another model for the classifying space.

\section{The special unitary group associated to each trace}\label{sec:SUA}
In \cite{CHRO23} the authors study the special unitary group of a unital C$^*$-algebra defined by
\[
SU(A)=\{u\in U(A): u=e^{ih_1}e^{ih_2}\ldots e^{ih_n}\ \text{for}\ n\in\N, h_i\in A^{sa}\cap \ker(T)\},
\]
where $T:A\rightarrow A/\overline{[A,A]}$ is the quotient map (often called the \emph{universal trace}). The topology given by the basis of open sets at the identity
\[
\mathcal{V}_{\varepsilon}=\{e^{ih}: h\in A^{sa}\cap \ker(T),\ \|h\|<\varepsilon\}
\]
for $\varepsilon>0$ make $SU(A)$ a Banach Lie subgroup of $U(A)_0$ associated to the Lie algebra of skewadjoint commutators $iA^{sa}\cap\overline{[A,A]}$. In fact $SU(A)$ is a Polish group (see \cite[Section 5]{CHRO23}).
\par In this section we generalise this construction to any tracial continuous linear map $\tau:A\rightarrow E$ for some Banach space $E$. Therefore, for the remainder of this section, we denote by $\tau$ an arbitrary such tracial, continuous linear map. We will be particularly interested in the case that $\tau\in T(A)$, as this case will be required in the next section to observe the invariant $\ob_\tau$ in the language of crossed modules.
\begin{definition}
    Let $\tau:A\rightarrow E$ be a tracial continuous linear map. We denote by
\[
    SU_{\tau}(A)=\{u\in U(A): u=e^{ih_1}e^{ih_2}\ldots e^{ih_n}\ \text{for}\ n\in\N,\ h_i\in A^{sa}\cap \ker(\tau)\}
\]
 which is a subgroup of $U(A)_0$.
\end{definition}
If one equips $SU_\tau(A)$ with the topology given by the basis of open sets at the identity
\begin{equation*}
   \mathcal{U}_{\varepsilon}=\{e^{ih}: h\in A^{sa}\cap \ker(\tau),\ \|h\|<\varepsilon\}
\end{equation*}
for $\varepsilon>0$ the group $SU_{\tau}(A)$ is the Banach Lie subgroup of $U(A)_0$ associated to the Lie algebra $iA^{sa}\cap \ker(\tau)$. Similarly, as in \cite{CHRO23}, we will show that this topology of $SU_{\tau}(A)$ is metrisable and that $SU_{\tau}(A)$ is a Polish group.
\begin{lemma}\label{lem:expinequalities}
    Let $h_1,h_2\in A^{sa}$. Then 
    \begin{equation*}
        \|h_1-h_2\|\left(1-\frac{\|h_1\|+\|h_2\|}{2}\right)\leq \|e^{ih_1}-e^{ih_2}\|\leq \|h_1-h_2\|.
    \end{equation*}
\end{lemma}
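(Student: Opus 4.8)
The plan is to treat the two inequalities separately; the right-hand bound is a routine Duhamel estimate, while the left-hand bound requires extracting a cancellation that the naive argument misses, and that is where all the difficulty lies.

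\textbf{Upper bound.} I would connect $h_1$ and $h_2$ by the straight-line path $h_t = (1-t)h_2 + t h_1$, $t\in[0,1]$, with $\dot h_t = c := h_1 - h_2$. Writing $e^{ih_1} - e^{ih_2} = \int_0^1 \frac{d}{dt}e^{ih_t}\,dt$ and using the derivative-of-exponential formula
\[
  \frac{d}{dt}e^{ih_t} = i\int_0^1 e^{i\sigma h_t}\,c\,e^{i(1-\sigma)h_t}\,d\sigma ,
\]
every factor $e^{\pm i(\cdots)h_t}$ is unitary, so the integrand has norm at most $\|c\|$. Hence $\|e^{ih_1}-e^{ih_2}\|\le\|c\| = \|h_1-h_2\|$, which is the right-hand inequality.

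\textbf{Lower bound.} The obstacle is the constant $\tfrac12$: estimating at the level of $g(h)=e^{ih}-1-ih$ via $\|e^{ih_1}-e^{ih_2}\|\ge\|c\|-\|g(h_1)-g(h_2)\|$ only gives $\|Dg(h)\| = O(\|h\|)$, because the first-order part of the conjugation $c\mapsto e^{i\sigma h}c\,e^{i(1-\sigma)h}$ survives; this yields the weaker bound $\|c\|(1-(\|h_1\|+\|h_2\|))$. The key idea is to pass to the imaginary part. Since $e^{ih_j} = \cos h_j + i\sin h_j$ with both summands self-adjoint, and $\|R+iS\|\ge\|S\|$ for self-adjoint $R,S$, we get $\|e^{ih_1}-e^{ih_2}\|\ge\|\sin h_1 - \sin h_2\|$. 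Setting $r(h)=\sin h - h$ and writing $\sin h_1 - \sin h_2 = c + (r(h_1)-r(h_2))$, it then suffices to prove $\|r(h_1)-r(h_2)\|\le\frac{\|h_1\|+\|h_2\|}{2}\|c\|$.

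The point is that $r$ gains an order. Differentiating $t\mapsto r(h_t)$ and using $\sin h=\frac{1}{2i}(e^{ih}-e^{-ih})$ gives the symmetric formula
\[
  D\sin(h)[c] - c = \frac12\int_0^1\left(e^{i\sigma h}c\,e^{i(1-\sigma)h} + e^{-i\sigma h}c\,e^{-i(1-\sigma)h} - 2c\right)d\sigma ,
\]
in which the two conjugations contribute opposite first-order terms $\pm i(\sigma hc+(1-\sigma)ch)$ that cancel; a second-order Taylor estimate then bounds the integrand by $\|h\|^2\|c\|$, so $\|D\sin(h)[c]-c\|\le\frac12\|h\|^2\|c\|$. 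Integrating along the path, and using $\|h_t\|\le(1-t)\|h_2\|+t\|h_1\|$, yields
\[
  \|r(h_1)-r(h_2)\|\ \le\ \frac{\|c\|}{2}\int_0^1\|h_t\|^2\,dt\ \le\ \frac{\|c\|}{2}\cdot\frac{\|h_1\|^2 + \|h_1\|\|h_2\| + \|h_2\|^2}{3}.
\]
This is even stronger than required: when $\|h_1\|+\|h_2\|\ge2$ the claimed lower bound is non-positive and holds trivially, while for $\|h_1\|+\|h_2\|<2$ one has $\|h_1\|^2+\|h_1\|\|h_2\|+\|h_2\|^2\le(\|h_1\|+\|h_2\|)^2\le 2(\|h_1\|+\|h_2\|)\le 3(\|h_1\|+\|h_2\|)$, so the displayed quantity is at most $\frac{\|h_1\|+\|h_2\|}{2}\|c\|$. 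The main obstacle is thus entirely the sharp constant, and it is resolved by replacing $e^{ih}$ with its imaginary part $\sin h$: this is exactly what forces the first-order conjugation errors to cancel in $\pm$ pairs, upgrading the Lipschitz behaviour of the remainder from $O(\|h\|)$ to $O(\|h\|^2)$ and producing the factor $\tfrac12$.
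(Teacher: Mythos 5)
Your proof is correct, but it reaches the lower bound by a genuinely different mechanism than the paper, and your framing of where the difficulty lies is not quite accurate. The paper derives both inequalities from a single integral representation: with $f(t)=e^{ith_1}e^{-ith_2}$ one has $\|e^{ih_1}-e^{ih_2}\|=\|\int_0^1 e^{ith_1}(h_1-h_2)e^{-ith_2}\,dt\|$, and for the lower bound the integrand is split as $(h_1-h_2)+(e^{ith_1}-1)(h_1-h_2)e^{-ith_2}+(h_1-h_2)(e^{-ith_2}-1)$, with the error terms controlled by $\|e^{ith_j}-1\|\le t\|h_j\|$. The sharp constant $\tfrac12$ therefore does not require any second-order cancellation: it is exactly $\int_0^1 t\,dt$, i.e.\ it comes from the fact that the first-order errors vanish at $t=0$ and are then integrated over the path parameter. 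The same correction rescues the scheme you dismiss as "naive": bounding $\|g(h_1)-g(h_2)\|$ by $\|c\|\int_0^1\|h_t\|\,dt\le \tfrac{\|h_1\|+\|h_2\|}{2}\|c\|$, rather than by $\|c\|\sup_t\|h_t\|$, already proves the lemma. Your actual argument --- compressing onto the imaginary part via $\|e^{ih_1}-e^{ih_2}\|\ge\|\sin h_1-\sin h_2\|$ and using the $\pm$ cancellation in $\sin h=\tfrac{1}{2i}\bigl(e^{ih}-e^{-ih}\bigr)$ to get $\|D\sin(h)[c]-c\|\le\tfrac12\|h\|^2\|c\|$ --- is correct in every step (the Taylor remainder bound, the integral $\int_0^1\|h_t\|^2\,dt\le\tfrac13\bigl(\|h_1\|^2+\|h_1\|\|h_2\|+\|h_2\|^2\bigr)$, and the closing case analysis all check out), and it buys a strictly stronger, quadratic-error estimate $\|e^{ih_1}-e^{ih_2}\|\ge\|h_1-h_2\|\bigl(1-\tfrac16(\|h_1\|^2+\|h_1\|\|h_2\|+\|h_2\|^2)\bigr)$; the price is extra machinery that is not needed for the paper's application, since the lemma is only invoked in \cref{lem:estimatenorms} for exponentials of elements of norm less than $\pi/2$. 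Your upper bound is essentially the paper's argument, run along the linear path $h_t$ instead of the path $f(t)=e^{ith_1}e^{-ith_2}$.
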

\begin{proof}
    First we show the rightmost inequality. Let $f(t)=e^{ith_1}e^{-ith_2}$. Then
    \begin{align*}
        \|e^{ih_1}-e^{ih_2}\|=\|f(1)-f(0)\|&=\|\int_0^1 f'(t)dt\|\\
        &=\|\int_0^1 e^{ith_1}(h_1-h_2)e^{-ith_2}\|\\
        &\leq \int_0^1\|e^{ith_1}(h_1-h_2)e^{-ith_2}\|dt\\
        &=\int_0^1\|h_1-h_2\|dt\\
        &=\|h_1-h_2\|.
    \end{align*}
    Now for the leftmost inequality we may make use of the inequalities above along with standard trigonometric inequalities.
    \begin{align*}
\|e^{ih_1}-e^{ih_2}\|&=\|\int_0^1(e^{ith_1}-1)(h_1-h_2)e^{-ith_2}+(h_1-h_2)(e^{-ith_2}-1)+(h_1-h_2)dt\|\\
&\geq \|h_1-h_2\|-\|\int_0^1(h_1-h_2)(e^{-ith_2}-1)dt\|-\|\int_0^1 (e^{ith_1}-1)(h_1-h_2)dt\|\\
&\geq \|h_1-h_2\|\left(1-\int_0^1\|e^{-ith_2}-1\|dt-\int_0^1\|e^{ith_1}-1\|dt\right)\\
&= \|h_1-h_2\|\left(1-\int_0^12\left(\sin{\left(\frac{t\|h_2\|}{2}\right)}\right)dt-\int_0^1 2\left(\sin{\left(\frac{t\|h_1\|}{2}\right)}\right)dt\right)\\
&\geq \|h_1-h_2\|\left(1-\int_0^1t\|h_2\|dt-\int_0^1t\|h_1\|dt\right)\\
&=\|h_1-h_2\|\left(1-\frac{\|h_1\|+\|h_2\|}{2}\right).
\end{align*}
\end{proof}
\par Consider the exponential length function
\begin{align*}
\el_{\tau}:SU_{\tau}(A)&\rightarrow [0,\infty)\\
u&\mapsto \inf\left\{\sum_{j=1}^n\|h_j\|: h_j\in A^{sa}, \tau(h_j)=0, u=e^{ih_1}e^{ih_2}\ldots e^{ih_n}\right\}.
\end{align*}
Firstly, note that $d_{\tau}(u,v)=\el_{\tau}(u^*v)$ is a pseudometric on $SU_{\tau}(A)$ and that $\el_\tau(uvu^*)=\el_\tau(v)$ for all $u\in U(A)$ and $v\in SU_\tau(A)$ and hence $d_{\tau}$ is both left and right invariant. We proceed to show that $d_\tau$ is a metric.
\begin{lemma}\label{lemma:exponentillengthsmall}
Let $A$ be a unital C$^*$-algebra with a tracial continuous function $\tau:A\rightarrow E$ and $u\in SU_\tau(A)$ such that $el_\tau(u)<\pi$. Then $el_\tau(u)=\|\log (u)\|$ and $\tau(\log (u))=0$.
\end{lemma}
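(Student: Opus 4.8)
The plan is to reduce everything to the principal logarithm. Writing $L=\el_\tau(u)<\pi$, I will first show that $-1\notin\sigma(u)$, so that $\log u=ik$ with $k=k^*$, $\sigma(k)\subseteq(-\pi,\pi)$ is defined, and that $\|\log u\|=\|k\|\le L$; then that $\tau(k)=0$; and finally combine these into the asserted equality. \emph{Step 1 (spectral bound, the main point).} I claim that for any factorisation $u=e^{ih_1}\cdots e^{ih_n}$ with $h_j\in A^{sa}$ and $s:=\sum_j\|h_j\|<\pi$ one has $\sigma(u)\subseteq\{e^{i\theta}:|\theta|\le s\}$. Fix a faithful unital representation $A\subseteq B(H)$; by spectral permanence of $C^*$-subalgebras the spectrum is unchanged. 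On the real Hilbert space underlying $H$ the function $D(\xi,\eta)=\arccos(\operatorname{Re}\langle\xi,\eta\rangle)$ is the great-circle metric on the unit sphere, hence is unitarily invariant and obeys the triangle inequality. For a single exponential and a unit vector $\xi$, the spectral measure $\mu$ of $h$ at $\xi$ is supported in $[-\|h\|,\|h\|]$, so for $\|h\|\le\pi$
\[
    \operatorname{Re}\langle\xi,e^{ih}\xi\rangle=\int\cos\lambda\,d\mu(\lambda)\ge\cos\|h\|,\qquad\text{i.e.}\qquad D(\xi,e^{ih}\xi)\le\|h\|.
\]
Applying the triangle inequality along the chain $\xi,\,e^{ih_n}\xi,\,e^{ih_{n-1}}e^{ih_n}\xi,\dots,u\xi$ gives $D(\xi,u\xi)\le s$, hence $\operatorname{Re}\langle\xi,u\xi\rangle\ge\cos s$ for \emph{every} unit vector $\xi$. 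Since every point of $\sigma(u)$ is an approximate eigenvalue of the normal operator $u$, an element $e^{\pm i\phi}\in\sigma(u)$ with $\phi\in(s,\pi]$ would produce unit vectors with $\operatorname{Re}\langle\xi,u\xi\rangle\to\cos\phi<\cos s$, a contradiction. Taking the infimum over factorisations then yields $-1\notin\sigma(u)$ and $\|\log u\|=\|k\|\le\el_\tau(u)$. (This argument sidesteps the exponential estimates of \cref{lem:expinequalities}, which I only need for the single-exponential displacement bound above.)

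\emph{Step 2 ($\tau(\log u)=0$).} I pick a factorisation with $\tau(h_j)=0$ and $\sum_j\|h_j\|<\pi$ and consider the smooth path $\gamma(t)=e^{ith_1}\cdots e^{ith_n}$ from $1$ to $u$. For each $t$ the partial exponents again sum to $<\pi$, so by Step 1 the path stays in the set $\Omega=\{v\in U(A):-1\notin\sigma(v)\}$, which is contractible because $\log$ identifies it with a convex set of self-adjoints of norm $<\pi$. On one hand, by the computation of the de la Harpe--Skandalis invariant on paths of this form recalled earlier, $\tilde{\Delta}_\tau(\gamma)=\tfrac{1}{2\pi}\sum_j\tau(h_j)=0$. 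On the other hand $\gamma$ is homotopic rel endpoints, through $C^1$ paths inside $\Omega$, to $t\mapsto e^{itk}$ (interpolate the logarithms linearly and exponentiate); by the homotopy invariance of $\tilde{\Delta}_\tau$ from \cite[Lemme~1]{SkdlH} this forces $\tilde{\Delta}_\tau(\gamma)=\tfrac{1}{2\pi}\tau(k)$. Hence $\tau(k)=0$, i.e.\ $\tau(\log u)=i\tau(k)=0$.

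\emph{Step 3 (equality) and the main obstacle.} By Step 2 the element $k=-i\log u$ lies in $A^{sa}\cap\ker\tau$ and satisfies $u=e^{ik}$, so it is an admissible one-term factorisation in the definition of $\el_\tau$, giving $\el_\tau(u)\le\|k\|=\|\log u\|$; combined with Step 1 this yields $\el_\tau(u)=\|\log u\|$. The genuine difficulty is Step 1: converting the metric hypothesis $\el_\tau(u)<\pi$ into sharp spectral information. The naive norm/chordal Lipschitz bound for the spectrum is too weak, as it conflates chordal and arc length, and an eigenvalue-velocity argument is awkward in infinite dimensions; the spherical-metric phase-tracking above is what makes the estimate simultaneously sharp and representation-independent. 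The one remaining point to verify carefully in Step 2 is that the homotopy to $t\mapsto e^{itk}$ can indeed be taken through $C^1$ paths within $\Omega$, so that the cited homotopy invariance of $\tilde{\Delta}_\tau$ genuinely applies.
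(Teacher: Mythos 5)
Your proof is correct, and its overall skeleton is the same as the paper's: a spectral estimate showing $-1\notin\sigma(u)$ together with $\|\log u\|\le\el_\tau(u)$, then a de la Harpe--Skandalis homotopy-invariance argument giving $\tau(\log u)=0$, and finally the one-term factorisation $u=e^{ik}$ to get the reverse inequality. The genuine difference is your Step 1: the paper simply cites \cite[Corollary 2.2 and Remark 2.3]{RI92} for the fact that a factorisation $u=e^{ih_1}\cdots e^{ih_n}$ with $s=\sum_j\|h_j\|<\pi$ forces $\sigma(u)\subseteq\{e^{i\theta}:|\theta|\le s\}$ and $\|\log u\|\le s$, whereas you re-prove this from scratch via the great-circle metric $D(\xi,\eta)=\arccos(\operatorname{Re}\langle\xi,\eta\rangle)$, the single-exponential displacement bound $D(\xi,e^{ih}\xi)\le\|h\|$, and approximate eigenvalues of the normal operator $u$; this argument is sound (spectral permanence and the emptiness of the residual spectrum of normal operators are used correctly), so your write-up is self-contained exactly where the paper outsources the key estimate. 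Your Step 2 is the paper's argument in slightly different clothing: the paper contracts the \emph{loop} $t\mapsto e^{-t\log u}\,e^{ith_1}\cdots e^{ith_n}$ through the loops obtained by scaling all exponents by $s$, while you homotope the \emph{path} $\gamma$ rel endpoints to $t\mapsto e^{itk}$; multiplying by $e^{-t\log u}$ and using that $\tilde{\Delta}_\tau$ is a group homomorphism translates one formulation into the other. Finally, the worry you flag at the end is not a real gap: for fixed $s$ the exponent $(1-s)k(t)+stk$ has norm at most $t\sum_j\|h_j\|<\pi$, so the interpolated homotopy stays in $\Omega$, and it is jointly smooth because the principal logarithm is given by holomorphic functional calculus on a neighbourhood of the fixed compact set $\{e^{i\theta}:|\theta|\le s\}\subset\bT\setminus\{-1\}$ containing all the spectra $\sigma(\gamma(t))$; in any case the invariance statement of \cite[Lemme 1]{SkdlH}, as recalled in the paper, only requires a continuous homotopy rel endpoints between $C^1$ paths, which you have.
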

\begin{proof}
    Let $u=e^{ih_1}e^{ih_2}\ldots e^{ih_n}$ for $h_i\in A^{sa}\cap \ker(\tau)$. Firstly, as $\el_\tau(u)<\pi$ it follows from \cite[Corollary 2.2]{RI92} (see also \cite[Remark 2.3]{RI92}) that $\sigma(u)\subset \bT \setminus\{-1\}$ and that $\|\log(u)\|\leq \sum_{i=1}^n\|h_i\|$. Let 
    \[
    u(t)=e^{-t\log(e^{ih_1}e^{ih_2}\ldots e^{ih_n})}e^{ith_1}e^{ith_2}\ldots e^{ith_n}
    \]
    \par then $u(t)$ is a $C^{\infty}$ loop in $U(A)_0$ based on $1_A$. Let
    \[
    u_s(t)=e^{-t\log(e^{ish_1}e^{ish_2}\ldots e^{itsh_n})}e^{itsh_1}e^{itsh_2}\ldots e^{itsh_n}
    \]
    for $(s,t)\in [0,1]^2$. Then $u_s$ is a homotopy of based loops with $u_0=1$ and $u_1=u$. As $\tilde{\Delta}_\tau$ is equal on homotopic paths (see \cite[Lemme 1]{SkdlH})
    \[
    0=\tilde{\Delta}_\tau(1)=\tilde{\Delta}_\tau(u)=\frac{1}{2\pi}(\sum_{j=1}^n\tau(h_j))-\frac{1}{2\pi i}\tau(\log(u)).
    \]
    Therefore $\tau(\log(u))=0$. As $h_1,h_2,\ldots ,h_n$ are arbitrary $\el_\tau(u)=\|\log(u)\|$.
    \end{proof}
    It is immediate from Lemma \ref{lemma:exponentillengthsmall} that $d_\tau$ is a left and right invariant metric. In fact $d_\tau$ equips $SU_\tau(A)$ with the structure of a Polish group.
\begin{theorem}
    Let $A$ be a separable C$^*$-algebra, then $SU_\tau(A)$ is a Polish group.
\end{theorem}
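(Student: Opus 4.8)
The plan is to verify the three defining features of a Polish group for $(SU_\tau(A), d_\tau)$: that $d_\tau$ is a metric inducing the stated topology, that the group operations are continuous, and that the space is separable and $d_\tau$-complete. Continuity of the operations is immediate from the two-sided invariance recorded just before the statement: since $\el_\tau(x^*) = \el_\tau(x)$ and $\el_\tau(wxw^*) = \el_\tau(x)$, one computes $d_\tau(uv, u_0v_0) \le d_\tau(u,u_0) + d_\tau(v,v_0)$ and $d_\tau(u^{-1}, u_0^{-1}) = d_\tau(u,u_0)$, so multiplication and inversion are Lipschitz. To see that $d_\tau$ induces the topology generated by $\{\cU_\varepsilon\}$, I would note that for $\|h\| < \pi$ the principal logarithm satisfies $\log e^{ih} = ih$, so $\el_\tau(e^{ih}) \le \|h\|$, while \cref{lemma:exponentillengthsmall} gives $\el_\tau(u) = \|\log u\|$ whenever $\el_\tau(u) < \pi$; combining these shows $B_{d_\tau}(1,\varepsilon) = \cU_\varepsilon$ for every $\varepsilon < \pi$, and left-invariance propagates the agreement to all points.

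For separability, fix the symmetric open neighbourhood $V = \cU_{\pi/2}$ of $1$. By the identification above, $\exp$ restricts to a homeomorphism from the ball of radius $\pi/2$ in the separable real Banach space $A^{sa} \cap \ker(\tau)$ onto $V$, so $V$ is separable. Since every element of $SU_\tau(A)$ is a finite product $e^{ih_1}\cdots e^{ih_n}$ and each factor $e^{ih_j}$ is a finite product of exponentials of norm $< \pi/2$, we have $SU_\tau(A) = \bigcup_{n \ge 1} V^n$, where $V^n$ is the image of $V^{\times n}$ under the continuous $n$-fold multiplication map. Each $V^n$ is a continuous image of the separable space $V^{\times n}$, hence separable, and a countable union of separable subspaces is separable.

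The main work, and the expected obstacle, is completeness. From \cref{lem:expinequalities} (telescoping together with adjoint invariance of $\el_\tau$) one gets $\|u - v\| \le d_\tau(u,v)$, so any $d_\tau$-Cauchy sequence is norm-Cauchy and converges in norm to a unitary $u \in U(A)$. The difficulty is to show $u$ actually lies in $SU_\tau(A)$ — i.e.\ that it is a genuine \emph{finite} product of exponentials from $A^{sa}\cap\ker(\tau)$, rather than only an infinite one — and that the convergence holds in $d_\tau$. I would pass to a subsequence $(u_n)$ with $d_\tau(u_n, u_{n+1}) < 2^{-n}$ and write $u_n^* u_{n+1} = e^{ik_n}$ with $k_n \in A^{sa}\cap\ker(\tau)$ and $\|k_n\| = \el_\tau(u_n^* u_{n+1}) < 2^{-n}$ via \cref{lemma:exponentillengthsmall}. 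Choosing $N$ so large that $\sum_{n \ge N}\|k_n\| < 1$, the tail $v_N = u_N^* u = \prod_{n \ge N} e^{ik_n}$ is a norm-convergent infinite product with $\|v_N - 1\| < 1$, hence $\sigma(v_N) \subset \bT \setminus \{-1\}$ and $\log v_N$ is defined.

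The crux is to show $v_N \in SU_\tau(A)$ with small exponential length. Each finite truncation $v_N^{(M)} = \prod_{n=N}^{M} e^{ik_n}$ lies in $SU_\tau(A)$ with $\el_\tau(v_N^{(M)}) \le \sum_{n=N}^M \|k_n\| < \pi$, so \cref{lemma:exponentillengthsmall} gives $\tau(\log v_N^{(M)}) = 0$. Letting $M \to \infty$, norm-continuity of the principal logarithm on unitaries with spectrum away from $-1$, together with continuity of $\tau$, yields $\tau(\log v_N) = 0$; hence $h := -i\log v_N \in A^{sa}\cap\ker(\tau)$, so $v_N = e^{ih} \in SU_\tau(A)$ and $\el_\tau(v_N) \le \|h\| = \|\log v_N\| \to 0$ as $N \to \infty$. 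Consequently $u = u_N v_N \in SU_\tau(A)$ and $d_\tau(u_N, u) = \el_\tau(v_N) \to 0$, so the subsequence converges in $d_\tau$; as the original sequence is Cauchy it converges to the same limit. This establishes completeness, and together with separability and the group structure we conclude that $SU_\tau(A)$ is Polish, following the template of \cite{CHRO23}.
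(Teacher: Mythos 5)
Your proposal is correct and follows essentially the same route as the paper: both identify the $d_\tau$-balls about the identity with the sets $\mathcal{U}_\varepsilon$ via \cref{lemma:exponentillengthsmall}, and both derive completeness from that same lemma. The paper's own proof is a three-sentence compression of exactly this; your write-up simply supplies the completeness argument (Cauchy sequence, norm limit, vanishing trace of the tail logarithms) and the separability and group-operation checks that the paper leaves implicit.
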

\begin{proof}
This follows as the topology given by the basis of open sets at the identity $\mathcal{V}_{\varepsilon}=\{e^{ih}: h\in A^{sa}\cap \ker(\tau),\ \|h\|<\varepsilon\}$ coincides with that given by the metric $d_\tau$. Indeed, for any $\varepsilon<\pi$, Lemma \ref{lemma:exponentillengthsmall} implies that $\mathcal{V}_{\varepsilon}=\{u\in SU_\tau(A): el_\tau(u)<\epsilon\}=B_{d_\tau}(1,\varepsilon)$. That $SU_\tau(A)$ is complete with respect to the metric $d_\tau$ also follows from Lemma \ref{lemma:exponentillengthsmall}.
\end{proof}
By the following lemma the $d_\tau$ topology is stronger than the norm topology.
\begin{lemma}\label{lem:estimatenorms}
    For $0<\varepsilon<\frac{\pi}{2}$ and  for any $u,v\in \mathcal{V}_{\varepsilon}=\{e^{ih}: h\in A^{sa}\cap \ker(\tau),\ \|h\|<\varepsilon\}$ we have that
    \[
    (1-\varepsilon)\|\log(u)-\log(v)\|\leq \|u-v\|\leq d_\tau(u,v)\leq \frac{\pi}{2}\|u-v\|\leq \frac{\pi}{2}\|\log(u)-\log(v)\|.
    \]
\end{lemma}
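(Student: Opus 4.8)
The plan is to fix $u,v\in\mathcal{V}_\varepsilon$ and write $u=e^{ih_1}$, $v=e^{ih_2}$ with $h_1,h_2\in A^{sa}\cap\ker\tau$ and $\|h_1\|,\|h_2\|<\varepsilon<\tfrac\pi2$. Since $\varepsilon<\pi$, the spectra of $h_1,h_2$ lie in $(-\pi,\pi)$, so the principal logarithm recovers $\log u=ih_1$ and $\log v=ih_2$; in particular $\|\log u-\log v\|=\|h_1-h_2\|$. The two outer inequalities are then immediate from \cref{lem:expinequalities}. For the leftmost one, its lower bound gives $\|u-v\|=\|e^{ih_1}-e^{ih_2}\|\ge\|h_1-h_2\|\bigl(1-\tfrac{\|h_1\|+\|h_2\|}{2}\bigr)\ge(1-\varepsilon)\|h_1-h_2\|$, using $\tfrac{\|h_1\|+\|h_2\|}{2}<\varepsilon$. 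For the rightmost one, its upper bound gives $\|u-v\|\le\|h_1-h_2\|$, and multiplying by $\tfrac\pi2$ yields $\tfrac\pi2\|u-v\|\le\tfrac\pi2\|\log u-\log v\|$.

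For the inequality $\|u-v\|\le d_\tau(u,v)$ I would argue that $d_\tau$ dominates the norm on all of $SU_\tau(A)$. Since $u$ is unitary, $\|u-v\|=\|1-u^*v\|$. Given any factorisation $u^*v=e^{ig_1}\cdots e^{ig_n}$ with $g_j\in A^{sa}\cap\ker\tau$, set $w_0=1$ and $w_j=e^{ig_1}\cdots e^{ig_j}$; telescoping and using that each $w_{j-1}$ is unitary gives
\[
\|1-u^*v\|=\Bigl\|\sum_{j=1}^n w_{j-1}(1-e^{ig_j})\Bigr\|\le\sum_{j=1}^n\|1-e^{ig_j}\|\le\sum_{j=1}^n\|g_j\|,
\]
where the last step is the rightmost bound of \cref{lem:expinequalities} applied to the pair $0,g_j$. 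Taking the infimum over all such factorisations gives $\|u-v\|\le\el_\tau(u^*v)=d_\tau(u,v)$.

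The main obstacle is the upper bound $d_\tau(u,v)\le\tfrac\pi2\|u-v\|$. Here I would use the explicit factorisation $u^*v=e^{-ih_1}e^{ih_2}$ to see that $\el_\tau(u^*v)\le\|h_1\|+\|h_2\|<2\varepsilon<\pi$, so that \cref{lemma:exponentillengthsmall} applies and yields $d_\tau(u,v)=\el_\tau(u^*v)=\|\log(u^*v)\|$ with $\sigma(u^*v)\subset\bT\setminus\{-1\}$. It then remains to prove the purely functional-calculus estimate $\|\log w\|\le\tfrac\pi2\|1-w\|$ for a unitary $w$ with $\sigma(w)\subset\{e^{i\theta}:|\theta|\le\pi\}$. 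Since $w$ is normal, $\|\log w\|$ and $\|1-w\|$ are the suprema over $\sigma(w)$ of $|\theta|$ and of $|1-e^{i\theta}|=2|\sin(\theta/2)|$ respectively, so the estimate reduces to the elementary inequality $|\theta|\le\pi|\sin(\theta/2)|$ on $[-\pi,\pi]$, equivalently $\tfrac{|\phi|}{|\sin\phi|}\le\tfrac\pi2$ for $|\phi|\le\tfrac\pi2$, which holds because $\phi/\sin\phi$ is increasing with value $\tfrac\pi2$ at $\phi=\tfrac\pi2$. Combining with $\|1-w\|=\|1-u^*v\|=\|u-v\|$ closes the chain. I expect the only genuinely delicate point to be the bookkeeping that confirms $\el_\tau(u^*v)<\pi$, which is exactly what makes \cref{lemma:exponentillengthsmall} available; everything else is a direct appeal to \cref{lem:expinequalities} or to monotonicity of $\phi/\sin\phi$.
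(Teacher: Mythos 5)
Your proof is correct, and three of the four inequalities are handled exactly as in the paper: the two outer ones by direct appeal to \cref{lem:expinequalities}, and the bound $d_\tau(u,v)\le\tfrac{\pi}{2}\|u-v\|$ by observing $\el_\tau(u^*v)\le\|h_1\|+\|h_2\|<2\varepsilon<\pi$, invoking \cref{lemma:exponentillengthsmall} to get $d_\tau(u,v)=\|\log(u^*v)\|$, and then reducing to Jordan's inequality $\tfrac{2}{\pi}\phi\le\sin\phi$ on $[0,\pi/2]$ (the paper phrases this via the functional-calculus identity $\|u-v\|=2\sin(d_\tau(u,v)/2)$, you via the pointwise spectral estimate $|\theta|\le\pi|\sin(\theta/2)|$; these are the same computation). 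The genuine difference is in the inequality $\|u-v\|\le d_\tau(u,v)$: the paper deduces it from that same identity $\|u-v\|=2\sin(d_\tau(u,v)/2)\le d_\tau(u,v)$, which again depends on \cref{lemma:exponentillengthsmall} and hence on $u,v$ lying in $\mathcal{V}_\varepsilon$, whereas your telescoping estimate $\|1-e^{ig_1}\cdots e^{ig_n}\|\le\sum_j\|g_j\|$ proves the stronger statement that the norm is dominated by $d_\tau$ on all of $SU_\tau(A)$, with no smallness hypothesis and using only the upper bound of \cref{lem:expinequalities}. That global version is precisely what the remark preceding the lemma (``the $d_\tau$ topology is stronger than the norm topology'') requires, so your route is marginally more self-contained on this point, at the cost of a slightly longer argument.
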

\begin{proof}
    As $u,v \in \mathcal{V}_{\epsilon}$ one has that $u=e^{\log(u)}$ and $v=e^{\log(v)}$. Therefore the leftmost and rightmost inequalities follow immediately from Lemma \ref{lem:expinequalities}. We also have that $\el_\tau(u^*v)<\pi$ and hence by Lemma \ref{lemma:exponentillengthsmall} and making use of standard trigonometric inequalities one has that
    \begin{equation*}
     \|u-v\|=\|1-u^*v\|=\|1-e^{\log(u^*v)}\|=2\sin\left(\frac{d_\tau(u,v)}{2}\right)\leq d_\tau(v,v)
    \end{equation*}
    and
    \begin{equation*}
        d_\tau(u,v)=\pi\left(\frac{2}{\pi}\cdot \frac{d_\tau(u,v)}{2}\right)\leq \pi \left(\sin\left(\frac{d_\tau(u,v)}{2}\right)\right)=\frac{\pi}{2}\|u-v\|.
    \end{equation*}
\end{proof}
\par Observe that $SU_{\tau}(A)\subset \ker(\Delta_\tau)\cap U(A)_0$. We will care about the case when this inclusion is an equality. This is shown to be the case when $\tau$ is the universal trace and $A$ has stable rank one in \cite{CHRO23}. Following a similar argument, we will show the equality of this inclusion for any tracial state $\tau$ on any C$^*$-algebra $A$ that satisfies a $K$-theoretic condition which is implied whenever $A$ has stable rank one. First note that, as is the case for the determinant associated to the universal trace, the following holds.
\begin{lemma}[{cf.\ \cite[Lemma 5.1]{CHRO23}}]\label{lem:pathSUtau}
Let $\tau\in T(A)$. There exists a continuous path $\eta:[0,1]\rightarrow U(A)$ with $u(0)=1_A$ and $u(1)=u$ and $\tilde{\Delta}_\tau(\eta)=0$ if and only if $u\in SU_\tau(A)$.
\end{lemma}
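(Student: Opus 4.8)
The plan is to prove the two implications separately: the ($\Leftarrow$) direction is a direct computation, while the ($\Rightarrow$) direction is obtained by approximating the given path by a piecewise-exponential one and reading off the determinant explicitly.

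For the ($\Leftarrow$) direction, suppose $u\in SU_\tau(A)$, so that $u=e^{ih_1}\cdots e^{ih_n}$ with $h_j\in A^{sa}\cap\ker(\tau)$. I would take the smooth path $\eta(t)=e^{ith_1}\cdots e^{ith_n}$, which is the pointwise product of the paths $t\mapsto e^{ith_j}$ and satisfies $\eta(0)=1_A$, $\eta(1)=u$. Since $\tilde\Delta_\tau$ is a group homomorphism on $C^1$ paths in $U(A)$ starting at $1_A$ (\cite[Lemme 1]{SkdlH}) and $\tilde\Delta_\tau(t\mapsto e^{ith})=\tfrac{1}{2\pi}\tau(h)$ by the computation recalled in \cref{sec:prelim}, one obtains $\tilde\Delta_\tau(\eta)=\tfrac{1}{2\pi}\sum_j\tau(h_j)=0$.

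For the ($\Rightarrow$) direction, the key observation is a sufficient criterion for membership in $SU_\tau(A)$: if $u=e^{ik_1}\cdots e^{ik_m}$ with $k_j\in A^{sa}$ and $\sum_j\tau(k_j)=0$, then $u\in SU_\tau(A)$. Indeed, writing $k_j=k_j^0+\tau(k_j)1_A$ with $k_j^0=k_j-\tau(k_j)1_A\in A^{sa}\cap\ker(\tau)$ (using $\tau(1_A)=1$ and $\tau(k_j)\in\R$), the central scalars factor out to give $u=e^{i\sum_j\tau(k_j)}\prod_j e^{ik_j^0}=\prod_j e^{ik_j^0}\in SU_\tau(A)$. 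It therefore suffices to produce such a decomposition of $u$ from the hypothesis $\tilde\Delta_\tau(\eta)=0$. To this end I would replace $\eta$ by a piecewise-exponential path: choosing a fine enough partition $0=t_0<\cdots<t_N=1$ so that $\eta(t_{i-1})^{-1}\eta(t_i)=e^{ig_i}$ for $g_i\in A^{sa}$ of small norm, telescoping yields $u=\eta(1)=\prod_{i=1}^N e^{ig_i}$. The path $\tilde\eta$ equal to $\eta(t_{i-1})e^{isg_i}$ (with $s$ the normalised parameter) on $[t_{i-1},t_i]$ agrees with $\eta$ at the nodes and, for a sufficiently fine partition, is homotopic to $\eta$ rel endpoints inside $U(A)$, so $\tilde\Delta_\tau(\tilde\eta)=\tilde\Delta_\tau(\eta)=0$ by homotopy invariance. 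Computing the defining integral of $\tilde\Delta_\tau(\tilde\eta)$ directly, using conjugation invariance $\tau(e^{-isg_i}g_ie^{isg_i})=\tau(g_i)$, gives $\tilde\Delta_\tau(\tilde\eta)=\tfrac{1}{2\pi}\sum_i\tau(g_i)$, whence $\sum_i\tau(g_i)=0$ and the criterion applies.

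The main obstacle is this forward direction, and in particular making sense of $\tilde\Delta_\tau(\eta)=0$ for a merely continuous path: one must either interpret $\tilde\Delta_\tau(\eta)$ through the piecewise-exponential approximation above (well defined precisely because of the homotopy invariance of $\tilde\Delta_\tau$ on $C^1$ paths from \cite[Lemme 1]{SkdlH}) or assume $\eta$ piecewise smooth. Once this is settled, the explicit integral computation for $\tilde\eta$ does all the work, and it is worth emphasising that it avoids any appeal to the image of $\pi_1(U(A))$ in $K_0(A)$; this is why, in contrast to the equality $SU_\tau(A)=\ker(\Delta_\tau)\cap U(A)_0$ treated later, the present characterisation requires no $K$-theoretic hypothesis on $A$.
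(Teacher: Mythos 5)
Your proof is correct, and its endgame coincides with the paper's: once $u=e^{ik_1}\cdots e^{ik_m}$ with $k_j\in A^{sa}$ and $\sum_j\tau(k_j)=0$, the central scalars $e^{i\tau(k_j)}$ are factored out to exhibit $u\in SU_\tau(A)$. The difference lies in how that decomposition is produced. The paper obtains it by invoking \cite[Lemma 5.1]{CHRO23} (``following'' the argument there, written for the universal trace, for the given tracial state $\tau$), whereas you construct it from scratch: subdivide $\eta$, write the increments $\eta(t_{i-1})^{-1}\eta(t_i)=e^{ig_i}$, pass to the piecewise-exponential path $\tilde\eta$, and compute $\tilde\Delta_\tau(\tilde\eta)=\tfrac{1}{2\pi}\sum_i\tau(g_i)$ directly, so that homotopy invariance forces $\sum_i\tau(g_i)=0$. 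This essentially inlines the proof of the cited lemma, making the statement self-contained; it also has the merit of saying precisely how $\tilde\Delta_\tau(\eta)$ is to be interpreted for a merely continuous path, a point the paper leaves to the reference. Two details you assert without proof are routine but worth recording: the homotopy rel endpoints between $\eta$ and $\tilde\eta$ (for a fine enough partition $\eta(t)^{-1}\tilde\eta(t)$ stays uniformly close to $1$, so $H(s,t)=\eta(t)\exp\bigl(s\log(\eta(t)^{-1}\tilde\eta(t))\bigr)$ is such a homotopy and fixes the endpoints), and the fact that $\tilde\eta$ has corners at the nodes, so each piece should be reparametrised (or the corners smoothed) before applying \cite[Lemme 1]{SkdlH}, which concerns $C^1$ paths. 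Neither issue affects correctness, and your closing remark is also accurate: no $K$-theoretic hypothesis is needed here, in contrast to the later identification $SU_\tau(A)=\ker(\Delta_\tau)\cap U(A)_0$.
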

\begin{proof}
        The if direction is clear. Choose a path $\eta$ as in the hypothesis. Following \cite[Lemma 5.1]{CHRO23} there exist $h_j\in A^{sa}$ for $1\leq j \leq n$ such that $u=e^{ih_1}e^{ih_2}\ldots e^{ih_n}$ and $\tau(\sum_{j=1}^n h_j)=0$. Then $u=e^{i(h_1-\tau(h_1))}\ldots e^{i(h_n-\tau(h_n))}$ exhibits $u\in SU_\tau(A)$.
\end{proof}
   By Bott periodicity the Bott map $K_0(A)\rightarrow \pi_1(U_{\infty}(A))$ is an isomorphism. By precomposing the inverse of the Bott map with the obvious mapping $\pi_1(U(A))\rightarrow \pi_1(U_\infty(A))$ we get a canonical map 
   \[
   B_A: \pi_1(U(A))\rightarrow K_0(A).
   \]
\begin{prop}\label{prop: kerdetcoincidewithSU}
    Let $\tau\in T(A)$ and suppose that the map $B_A:\pi_1(U(A))\rightarrow K_0(A)$ is surjective then $\ker(\Delta_\tau)\cap U(A)_0=SU_{\tau}(A)$.\footnote{The map $\pi_1(U(A))\rightarrow K_0(A)$ is surjective in many cases of interest. For example whenever $A$ has stable rank one (see \cite[Theorem 3.3]{RIE87}), when $A$ is Jiang-Su stability (see \cite[Theorem 3]{JIA97},\cite[Theorem 4.8]{Class1}) or when $K_0(A)$ is generated by the projections in $A$.} 
\end{prop}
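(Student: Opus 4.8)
The inclusion $SU_\tau(A) \subseteq \ker(\Delta_\tau) \cap U(A)_0$ has already been recorded above, so the plan is to establish only the reverse containment. Fix $u \in \ker(\Delta_\tau) \cap U(A)_0$. Since $u$ lies in the identity component, I can choose a $C^\infty$ path $\xi \colon [0,1] \to U(A)$ with $\xi(0) = 1_A$ and $\xi(1) = u$, for instance one of the form $\xi(t) = e^{ith_1}\cdots e^{ith_m}$. By the very definition of the determinant, $\Delta_\tau(u) = \tilde{\Delta}_\tau(\xi) + \tau_*(K_0(A))$, so the hypothesis $\Delta_\tau(u) = 0$ says precisely that $\tilde{\Delta}_\tau(\xi) \in \tau_*(K_0(A))$, say $\tilde{\Delta}_\tau(\xi) = \tau_*(x)$ for some $x \in K_0(A)$. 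In view of \cref{lem:pathSUtau} it suffices to correct $\xi$ to a path $\eta$ in $U(A)$ from $1_A$ to $u$ with $\tilde{\Delta}_\tau(\eta) = 0$; the whole point is to cancel the defect $\tau_*(x)$ by a loop that stays inside $U(A)$ rather than inside $U_\infty(A)$.

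The key computation identifies the determinant on loops coming from $U(A)$. Since $\tilde{\Delta}_\tau$ is invariant under path-homotopy \cite[Lemme 1]{SkdlH}, it descends to a homomorphism $\pi_1(U_\infty(A)) \to E$, and evaluating it on the Bott generators $\lambda_{p,q}(t) = e^{2\pi i tp}e^{-2\pi i tq}$ gives $\tilde{\Delta}_\tau(\lambda_{p,q}) = \tau(p) - \tau(q) = \tau_*([p]-[q])$, exactly as in the computation of $\Delta_\tau$ on products of exponentials above. Thus, under the Bott isomorphism $\pi_1(U_\infty(A)) \cong K_0(A)$, this induced homomorphism is precisely $\tau_*$. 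Because $B_A$ is by construction the composite of the map $\pi_1(U(A)) \to \pi_1(U_\infty(A))$ with the inverse Bott isomorphism, I obtain for every loop $\lambda$ in $U(A)$ based at $1_A$ the identity $\tilde{\Delta}_\tau(\lambda) = \tau_*(B_A([\lambda]))$. In particular the set of values taken by $\tilde{\Delta}_\tau$ on loops in $U(A)$ equals $\tau_*(B_A(\pi_1(U(A))))$.

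This is where surjectivity of $B_A$ enters: it forces $\tau_*(B_A(\pi_1(U(A)))) = \tau_*(K_0(A))$, so I can pick a loop $\lambda$ in $U(A)$ at $1_A$ with $\tilde{\Delta}_\tau(\lambda) = \tau_*(x) = \tilde{\Delta}_\tau(\xi)$. Since $\tilde{\Delta}_\tau$ is a group homomorphism for the pointwise multiplication of paths \cite[Lemme 1]{SkdlH}, the pointwise product $\eta(t) = \lambda(t)^{-1}\xi(t)$ is again a $C^\infty$ path in $U(A)$ from $1_A$ to $u$ (note $\lambda(0)^{-1} = 1_A$ and $\lambda(1)^{-1} = 1_A$), and $\tilde{\Delta}_\tau(\eta) = -\tilde{\Delta}_\tau(\lambda) + \tilde{\Delta}_\tau(\xi) = 0$. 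By \cref{lem:pathSUtau} this yields $u \in SU_\tau(A)$, which completes the reverse inclusion and hence the proof.

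I expect the main technical point to be the identity $\tilde{\Delta}_\tau(\lambda) = \tau_*(B_A([\lambda]))$, that is, pinning down exactly which values in $\tau_*(K_0(A))$ are realised by determinants of loops that remain in $U(A)$ as opposed to in $U_\infty(A)$. This is the only place the $K$-theoretic hypothesis is used, and once it is in hand everything else reduces to the homotopy invariance and the homomorphism property of the de la Harpe--Skandalis determinant combined with \cref{lem:pathSUtau}.
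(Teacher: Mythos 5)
Your proof is correct and follows essentially the same route as the paper's: both use surjectivity of $B_A$ to produce a loop in $U(A)$ whose de la Harpe--Skandalis determinant cancels the defect of a path from $1_A$ to $u$, and then conclude via \cref{lem:pathSUtau}. The only organizational difference is that the paper first factors $u = e^{il_1}\cdots e^{il_n}e^{i\tau(\sum h_j)}$ and corrects the scalar factor $e^{2\pi i\tau(p)}$, whereas you correct the whole path at once; this is a cosmetic rearrangement, not a different argument.
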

\begin{proof}
    Let $u\in \ker(\Delta_\tau)\cap U(A)_0$. Then there exists $h_1,h_2,\ldots h_n\in A^{sa}$ such that $u=e^{ih_1}e^{ih_2}\ldots e^{ih_n}$. Let $l_j=h_j-\tau(h_j)$ for $1\leq j\leq n$, then 
    \[
    u=e^{il_1}e^{il_2}\ldots e^{il_n}e^{i\tau(\sum_{j=1}^n h_i)}.
    \]
    As $u\in \ker(\Delta_\tau)$ it follows that $\tau(\sum_{j=1}^n h_i)\in 2\pi \tau_{*}(K_0(A))$ and so there exists $p,q\in P_n(A)$ for some $n\in \N$ with $\tau(\sum_{j=1}^n h_j)=2\pi \tau(p)-2\pi \tau(q)$. Hence it suffices to show that $e^{2\pi i \tau(p)}\in SU_\tau(A)$ for any $p \in P_n(A)$ and $n\in \N$. If the canonical map $\pi_1(U(A))\rightarrow K_0(A)$ is surjective there exists a continuous loop $\eta:[0,1]\rightarrow U(A)_0$ that is homotopic to the loop $e^{2\pi itp}$. Therefore, the path $\xi(t)=e^{2\pi i t\tau(p)}\eta(t)^{-1}$ is a continuous path contained in $U(A)$ from $1$ to $e^{2\pi i \tau(p)}$. Then
    $\tilde{\Delta}_\tau(\xi)=\tilde{\Delta}_\tau(e^{2\pi i t\tau(p)})-\tilde{\Delta}_{\tau}(\eta)=0$. So by Lemma \ref{lem:pathSUtau} the unitary $e^{2\pi i \tau(p)}\in SU_\tau(A)$. 
\end{proof}
    \begin{remark}\label{rem:discrete}
        In most cases the $d_\tau$ topology is strictly stronger than the norm topology. For example, assuming that $\tau\in T(A)$ and that $\pi_1(U(A))\rightarrow K_0(A)$ is surjective, then $\ker(\Delta_\tau)\cap U(A)_0=SU_\tau(A)$ and so $e^{2\pi i \tau_{^*}(K_0(A))}$ is a subgroup of $SU_\tau(A)$. Moreover, for any non-zero $r\in \tau_{*}(K_0(A))/\Z$ the evaluation $\tau(\log(e^{2\pi i r}))\neq 0$. Thus it follows immediately from Lemma \ref{lemma:exponentillengthsmall} that $e^{2\pi i \tau_{^*}(K_0(A))}$ is a discrete subgroup of $SU_\tau(A)$. However, if $A$ is a UHF-algebra of infinite type, then the subgroup $e^{2\pi i \tau_{^*}(K_0(A))}$ is a countable dense subgroup of $\bT$ in the norm topology.
    \end{remark}
    Denote by $PU_0(A)$ the quotient topological group $U(A)_0/ZU(A)$. It is clear that the groups $SU_\tau(A)/ZSU_\tau(A)$ and $PU_0(A)$ are isomorphic as discrete groups. In fact, they are isomorphic as topological groups from the $d_\tau$ topology to the norm topology.
\begin{lemma}\label{lem:topologyproj}
    The norm topology on $PU_0(A)$ coincides with the $d_\tau$ topology after the canonical identification of $PU_0(A)$ with $SU_\tau(A)/ZSU_\tau(A)$.
\end{lemma}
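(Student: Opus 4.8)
The plan is to realise the canonical identification as an explicit group isomorphism
$\phi\colon SU_\tau(A)/ZSU_\tau(A)\to PU_0(A)$, $[s]\mapsto q(s)$, where $q\colon U(A)_0\to PU_0(A)$ is the norm quotient map, and to show it is a homeomorphism when the source carries the quotient of the $d_\tau$ topology and the target the quotient of the norm topology. Throughout I take $\tau\in T(A)$, so that for $k\in A^{sa}$ the correction $e^{i\tau(k)}$ is a scalar, hence central. First I would record that $ZSU_\tau(A)=SU_\tau(A)\cap ZU(A)$: an element central in $SU_\tau(A)$ commutes with every $e^{ih}$ for $h\in A^{sa}\cap\ker\tau$, hence with all of $A^{sa}=(A^{sa}\cap\ker\tau)\oplus\R\cdot 1$, so it lies in $Z(A)$; the reverse inclusion is trivial. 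This makes $\phi$ well defined and bijective. Both spaces are topological groups — quotients of the topological groups $SU_\tau(A)$ and $U(A)_0$ by closed normal subgroups, the closedness of $ZSU_\tau(A)$ in the $d_\tau$ topology being its discreteness (\cref{rem:discrete}) — so by the homomorphism principle it suffices to verify that $\phi$ and $\phi^{-1}$ are continuous at the identity.

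Continuity of $\phi$ is the easy direction and I would dispatch it with the quotient universal property. By \cref{lem:estimatenorms} the $d_\tau$ topology is finer than the norm topology, so the inclusion $\iota\colon SU_\tau(A)\hookrightarrow U(A)_0$ is continuous; composing with $q$ gives a continuous map $q\circ\iota\colon SU_\tau(A)\to PU_0(A)$ that sends $ZSU_\tau(A)\subseteq ZU(A)=\ker q$ to the identity. Writing $p\colon SU_\tau(A)\to SU_\tau(A)/ZSU_\tau(A)$ for the $d_\tau$ quotient map, we have $q\circ\iota=\phi\circ p$, and since $p$ is a quotient map this forces $\phi$ to be continuous.

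The substantive direction is continuity of $\phi^{-1}$, equivalently openness of $\phi$, and this is where the main obstacle lies: on $SU_\tau(A)$ the $d_\tau$ topology is \emph{strictly} finer than the norm topology (the central unitaries $e^{2\pi i\tau_*(K_0(A))}$ can be norm-small but $d_\tau$-large, \cref{rem:discrete}), so one must argue that this discrepancy is entirely concentrated in $ZSU_\tau(A)$ and hence washed out by the quotient. I would make this precise by a lifting argument. Given a $d_\tau$-open $W\ni[1]$, choose $\delta\in(0,\pi)$ with $\mathcal{V}_\delta\subseteq p^{-1}(W)$. For $u\in U(A)_0$ with $\|u-1\|$ small enough that the principal logarithm is defined, write $u=e^{ik}$ with $k=-i\log u\in A^{sa}$ of controlled norm, and set $s=e^{i(k-\tau(k)1)}$. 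Then $k-\tau(k)1\in A^{sa}\cap\ker\tau$, so $s\in SU_\tau(A)$, while $u=s\,e^{i\tau(k)}$ exhibits $u$ and $s$ as differing by the central unitary $e^{i\tau(k)}$, whence $\phi^{-1}(q(u))=[s]$. Since $\|k-\tau(k)1\|\le\|k\|+|\tau(k)|\le 2\|k\|$, shrinking the norm-ball so that $2\|k\|<\delta$ gives $s\in\mathcal{V}_\delta\subseteq p^{-1}(W)$, i.e. $[s]\in W$. Thus a full norm-ball around $1$ lands in $W$ under $\phi^{-1}\circ q$; as $q$ is an open map, $\phi(W)$ contains a norm-neighbourhood of $[1]$, which is exactly continuity of $\phi^{-1}$ at the identity. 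Combining the two directions, $\phi$ is a homeomorphism, proving the lemma.
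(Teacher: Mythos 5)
Your proof is correct in substance and follows essentially the same route as the paper's: the easy direction comes from the $d_\tau$ topology being finer than the norm topology, and the hard direction uses exactly the paper's scalar-correction trick --- replace a norm-small lift $u=e^{ik}$ by $s=e^{i(k-\tau(k)1)}\in SU_\tau(A)$, which differs from $u$ by the central scalar $e^{i\tau(k)}$, and estimate $d_\tau(1,s)\le\|k-\tau(k)1\|\le 2\|k\|$. The paper phrases this with sequences (both topologies being metrisable) where you use neighbourhoods and openness of the quotient map, and you additionally spell out the identification $ZSU_\tau(A)=ZU(A)\cap SU_\tau(A)$, which the paper treats as clear; these are cosmetic differences.

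One side claim is wrong, though it does not sink the proof: you assert that $ZSU_\tau(A)$ is discrete in the $d_\tau$ topology, citing \cref{rem:discrete}. That remark only shows (under the extra hypothesis that $\pi_1(U(A))\to K_0(A)$ is surjective) that the scalar subgroup $e^{2\pi i\tau_*(K_0(A))}$ is discrete; it says nothing about $ZSU_\tau(A)$ in general, and indeed discreteness fails when $Z(A)\neq\C$: any $c\in Z(A)^{sa}\cap\ker\tau$ with $\|c\|$ small gives a nontrivial central unitary $e^{ic}\in SU_\tau(A)$ with $d_\tau(1,e^{ic})\le\|c\|$. Fortunately you only invoke discreteness to get closedness of $ZSU_\tau(A)$, which holds anyway: by your own identification $ZSU_\tau(A)=ZU(A)\cap SU_\tau(A)$ is norm-closed in $SU_\tau(A)$, hence $d_\tau$-closed since the $d_\tau$ topology is finer (\cref{lem:estimatenorms}); and in fact Hausdorffness of the quotients is not needed at all to compare the two quotient topologies, so the remark can simply be deleted.
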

\begin{proof}
    Let 
    \begin{align*}
    q_1&:U(A)_0\rightarrow PU_0(A),\\
    q_2&:SU_\tau(A)\rightarrow SU_\tau(A)/ZSU_\tau(A)
    \end{align*}
    be the quotient maps. As the $d_\tau$ topology is stronger than the norm topology, for any norm open $V\subset PU_0(A)$ one has that $q_2^{-1}(V)=q_1^{-1}(V)\cap SU_\tau(A)$ is open in $d_\tau$ topology.
    \par As both topologies are metrisable, it suffices to show that for any sequence $\overline{u}_n\in PU_0(A)$ converging to 1 the sequence $\overline{u}_n$ converges also in $d_\tau$ to 1. One may choose $u_n\in U(A)_0$ such that $q_1(u_n)=\overline{u}_n$ and $\|u_n-1\|\rightarrow 0$. Then for sufficiently large $n$ one has that $\|u_n-1\|< 1$ and so $v_n=u_ne^{-\tau(\log(u_n))}\in \ker(\Delta_\tau)$. Now $q_2(v_n)=\overline{u}_n$ and 
    \[
    d_\tau(1,v_n)\leq \|\log(u_n)-\tau(\log(u_n))\|\leq 2\|\log(u_n)\|\rightarrow 0 \qedhere
    \]
\end{proof}

\section{Crossed modules associated to C$^*$-algebras}\label{sec:crossedmodules}
We start by recalling the definition of a topological crossed module.
\begin{definition} \label{def:crossed_mod}
	A \emph{(topological) crossed module} consists of a continuous homomorphism $\partial \colon H \to G$ of topological groups, and a continuous left action of $G$ on $H$ such that 
	\begin{enumerate}[(i)]
		\item \label{cm:i} $\partial(\alpha(u)) = \alpha\,\partial(u)\,\alpha^{-1}$ for all $\alpha \in G$ and $u \in H$,
		\item \label{cm:ii} $\partial(u)(v) = u\,v\,u^{-1}$ for all $u,v \in H$.
	\end{enumerate}
Where we denote the action of $\alpha\in G$ on $u\in H$ by $\alpha(u)$.
\end{definition}
Any topological group $G$ defines a crossed module $1\rightarrow G$. Similarly, a topological group $H\rightarrow 1$ defines a crossed module if and only if $H$ is abelian. There are canonical crossed modules associated to C$^*$-algebras too. The map
\[
	\partial = \Ad \colon U(A) \to \Aut{A} \quad , \quad u \mapsto \Ad_u
\]
is a continuous group homomorphism. Moreover, $\Aut{A}$ acts continuously on $U(A)$ by mapping $u$ to $\alpha(u)$ for $\alpha \in \Aut{A}$ and $u \in U(A)$. It is straightforward to check that conditions \eqref{cm:i} and \eqref{cm:ii} of \cref{def:crossed_mod} are satisfied. Indeed,
\begin{align*}
	\partial(\alpha(u)) &= \Ad_{\alpha(u)} = \alpha \circ \Ad_u \circ \alpha^{-1} = \alpha\,\partial(u)\,\alpha^{-1} \ ,\\
	\partial(u_1)(u_2) &= \Ad_{u_1}(u_2) = u_1\,u_2\,u_1^{-1}\ .
\end{align*}
We will denote this crossed module by $\cG_A$. We can modify this construction slightly to construct another crossed module. Let
\[
	PU(A) = U(A) / ZU(A)\ .
\]
As a quotient of a Banach Lie group by a closed normal Lie subgroup $PU(A)$ is again a Banach Lie group (see for example \cite[Thm.~1]{paper:GloecknerNeeb}). Since $\Ad \colon U(A) \to \Aut{A}$ factors through a homomorphism $PU(A) \to \Aut{A}$, which we continue to denote $\Ad$, and the action of $\Aut{A}$ is also well-defined on $PU(A)$, we obtain another crossed module that will be denoted by $P\cG_A$. 
\par For the remainder of the section we assume that $U(A)=U(A)_0$. Denote by $\covU{A}$ the universal cover of $U(A)$, which is again a Banach Lie group modelled on the same Banach Lie algebra of self-adjoint elements in $A$. This universal cover has a concrete description. Let
\[
	\mathcal{P}_{U(A)} = \{ \gamma \in C([0,1],U(A))\ |\ \gamma(0) = 1_A \}
\] 
This is a group with respect to pointwise multiplication of paths. Recall that $\Omega_0 U(A) \subset \mathcal{P}_{U(A)}$ is the normal subgroup of those loops in $U(A)$ based at $1_A$ that are based homotopic to the constant loop at $1_A$, i.e.\ the path component of the identity in $\Omega U(A)$. We have  
\[
	\covU{A} = \mathcal{P}_{U(A)}/ \Omega_0U(A)\ .
\] 
Note that $\Aut{A}$ acts continuously on $\mathcal{P}_{U(A)}$ via $\alpha(\gamma) = \alpha \circ \gamma$. This actions maps $\Omega_0U(A)$ into itself and therefore induces a continuous action of $\Aut{A}$ on $\covU{A}$. Let $q \colon \covU{A} \to U(A)$ be the bundle map given by $q([\gamma]) = \gamma(1)$, then 
\[
	\widetilde{\Ad} = \Ad \circ q \colon \covU{A} \to \Aut{A} 
\]
is a continuous group homomorphism. 
\begin{lemma} \label{lem:adjoint_action}
Let $\gamma_1, \gamma_2 \in \mathcal{P}_{U(A)}$ be paths representing $[\gamma_1], [\gamma_2] \in \covU{A}$. In~$\covU{A}$ the following identity holds:
\begin{equation} \label{eqn:Ad-eq}
	\Ad_{\gamma_1(1)}([\gamma_2]) = \Ad_{[\gamma_1]}([\gamma_2])\ .
\end{equation}
Thus $\widetilde{Ad}:\covU A\rightarrow \Aut A$ defines a crossed module. We denote this crossed module by $\wcG_A$.
\end{lemma}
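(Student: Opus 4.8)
The plan is to first recognise that the displayed identity \eqref{eqn:Ad-eq} is exactly condition (ii) of \cref{def:crossed_mod} for $\partial = \widetilde{\Ad}$. Indeed $\widetilde{\Ad}([\gamma_1]) = \Ad\circ q([\gamma_1]) = \Ad_{\gamma_1(1)}$, so the left-hand side of \eqref{eqn:Ad-eq} is the value of the $\Aut{A}$-action of the automorphism $\Ad_{\gamma_1(1)}$ on the element $[\gamma_2]\in\covU{A}$, while the right-hand side $\Ad_{[\gamma_1]}([\gamma_2])$ is shorthand for the conjugation $[\gamma_1][\gamma_2][\gamma_1]^{-1}$ inside the group $\covU{A}$. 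Condition (i) of \cref{def:crossed_mod} needs no new work: since $q([\alpha\circ\gamma]) = \alpha(\gamma(1))$, the already-verified identity for $\cG_A$ gives $\widetilde{\Ad}(\alpha([\gamma])) = \Ad_{\alpha(\gamma(1))} = \alpha\,\Ad_{\gamma(1)}\,\alpha^{-1} = \alpha\,\widetilde{\Ad}([\gamma])\,\alpha^{-1}$ for all $\alpha\in\Aut{A}$. Together with the continuity of $\widetilde{\Ad}$ and of the $\Aut{A}$-action (both established before the lemma), this reduces the whole statement to \eqref{eqn:Ad-eq}.

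To prove \eqref{eqn:Ad-eq} I would represent both sides by explicit paths in $\mathcal{P}_{U(A)}$. Unwinding the $\Aut{A}$-action and the pointwise group operations, the left-hand side is the class of $t\mapsto \gamma_1(1)\,\gamma_2(t)\,\gamma_1(1)^{-1}$ and the right-hand side is the class of $t\mapsto \gamma_1(t)\,\gamma_2(t)\,\gamma_1(t)^{-1}$. Both paths start at $1_A$ and end at the common unitary $\gamma_1(1)\gamma_2(1)\gamma_1(1)^{-1}$. Since $\covU{A} = \mathcal{P}_{U(A)}/\Omega_0U(A)$, two paths from $1_A$ with equal endpoints represent the same class precisely when they are homotopic rel endpoints: in the topological group $U(A)$ the pointwise loop $\alpha\beta^{-1}$ is based-homotopic to the concatenation of $\alpha$ with the reverse of $\beta$, so it lies in $\Omega_0U(A)$ iff $\alpha$ and $\beta$ are path-homotopic. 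It therefore suffices to exhibit such a homotopy.

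The homotopy I would use interpolates the conjugating factor from $\gamma_1(t)$ to $\gamma_1(1)$, namely
\[
    H(s,t) = \gamma_1\big((1-s)t+s\big)\,\gamma_2(t)\,\gamma_1\big((1-s)t+s\big)^{-1}, \qquad (s,t)\in[0,1]^2 .
\]
This is continuous, and $H(0,\cdot)$, $H(1,\cdot)$ recover the two paths above. The decisive checks are on the $t$-boundary: at $t=0$ we get $H(s,0) = \gamma_1(s)\,1_A\,\gamma_1(s)^{-1} = 1_A$, and at $t=1$ we have $(1-s)+s = 1$, so $H(s,1) = \gamma_1(1)\gamma_2(1)\gamma_1(1)^{-1}$ independently of $s$. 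Thus $H$ is a homotopy rel endpoints and \eqref{eqn:Ad-eq} follows. (Alternatively one can argue by connectedness: since $q\colon\covU{A}\to U(A)$ is a covering homomorphism with discrete kernel and $\covU{A}$ is connected, the map $[\gamma_1]\mapsto \Ad_{\gamma_1(1)}([\gamma_2])\cdot\big([\gamma_1][\gamma_2][\gamma_1]^{-1}\big)^{-1}$ is continuous, lands in $\ker q$ because both factors have image $\gamma_1(1)\gamma_2(1)\gamma_1(1)^{-1}$ under the homomorphism $q$, hence is constant with value $1$ at the trivial class.) The only genuine subtlety, and so the step to write carefully, is the identification of equality in $\covU{A}$ with path-homotopy rel endpoints and the attendant endpoint bookkeeping for $H$; beyond that the argument is formal.
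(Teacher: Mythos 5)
Your proposal is correct and follows essentially the same route as the paper: the identity \eqref{eqn:Ad-eq} is reduced to exhibiting a homotopy rel endpoints between the conjugated paths, and your homotopy $H(s,t)=\gamma_1((1-s)t+s)\,\gamma_2(t)\,\gamma_1((1-s)t+s)^{-1}$ is exactly the paper's homotopy with the roles of the path and homotopy variables relabelled, with the crossed-module axioms then checked by the same direct computations. The parenthetical covering-space argument is a valid alternative for \eqref{eqn:Ad-eq}, but the main argument coincides with the paper's proof.
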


\begin{proof}
Consider the homotopy $H \colon [0,1] \times [0,1] \to U(A)$ defined by 
\[
	H(s,t) = \gamma_1(s + (1-s)t)\cdot \gamma_2(s) \cdot \gamma_1(s + (1-s)t)^{-1}\ .
\]
This satisfies $H(0,t) = 1_A$, $H(\cdot,0) = \Ad_{\gamma_1}(\gamma_2)$, $H(\cdot, 1) = \Ad_{\gamma_1(1)}(\gamma_2)$ and $H(1,t) = \Ad_{\gamma_1(1)}(\gamma_2(1))$, which implies $H(\cdot, 0)^{-1} \cdot H(\cdot, 1) \in \Omega_0U(A)$.	
We now check that $\wcG_A$ is a crossed module. Let $\alpha \in \Aut{A}$ and $\gamma, \gamma_1,\gamma_2 \in \mathcal{P}_{U(A)}$. By Equation \eqref{eqn:Ad-eq} we have
\begin{align*}
	\widetilde{\Ad}_{\alpha([\gamma])} &= \Ad_{\alpha(\gamma(1))} = \alpha \circ \Ad_{\gamma(1)} \circ \alpha^{-1} = \alpha \circ \widetilde{\Ad}_{[\gamma]} \circ \alpha^{-1} \\
	\widetilde{\Ad}_{[\gamma_1]}([\gamma_2]) &= \Ad_{\gamma_1(1)}([\gamma_2]) = \Ad_{[\gamma_1]}(\gamma_2)=[\gamma_1] \cdot [\gamma_2] \cdot [\gamma_1]^{-1}.
\end{align*}
\end{proof}

Note that Lemma \ref{lem:adjoint_action} also proves that we have a central extension
\[
	1 \to \pi_1(U(A)) \to \covU{A} \to U(A) \to 1\ .
\]

\par We now turn to crossed modules associated to tracial C$^*$-algebras.
\begin{lemma}
    Let $\tau\in T(A)$. The canonical action of $\Autt A$ on $SU_\tau(A)$ is continuous. Hence the mapping $\Ad: SU_\tau(A)\rightarrow \Autt A$ defines a crossed module denoted by $S\cG_A^{\tau}$.
\end{lemma}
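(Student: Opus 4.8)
The plan is to separate the purely algebraic content from the continuity statement, which is the only subtle point since $SU_\tau(A)$ is equipped with the $d_\tau$-topology rather than the norm topology. First I would check that the action is well defined: for $\varphi \in \Autt{A}$ and $u = e^{ih_1}\cdots e^{ih_n} \in SU_\tau(A)$ with $h_j \in A^{sa}\cap\ker(\tau)$ one has $\varphi(u) = e^{i\varphi(h_1)}\cdots e^{i\varphi(h_n)}$, where each $\varphi(h_j)$ is self-adjoint, satisfies $\tau(\varphi(h_j)) = \tau(h_j) = 0$, and has $\|\varphi(h_j)\| = \|h_j\|$; hence $\varphi(u) \in SU_\tau(A)$. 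Taking the infimum over decompositions and applying the same to $\varphi^{-1} \in \Autt{A}$ yields $\el_\tau(\varphi(u)) = \el_\tau(u)$, so by the bi-invariance of $d_\tau$ every $\varphi$ acts as a $d_\tau$-isometry of $SU_\tau(A)$. The crossed module structure is then essentially inherited from $\cG_A$: for $u \in SU_\tau(A)$ the trace property gives $\tau(\Ad_u(a)) = \tau(a)$, so $\Ad$ maps $SU_\tau(A)$ into $\Autt{A}$, and it is continuous because the $d_\tau$-topology dominates the norm topology while $\Ad \colon U(A) \to \Aut{A}$ is norm-to-point-norm continuous; the two crossed module axioms hold verbatim as for $\cG_A$.

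It remains to prove joint continuity of the action $\Autt{A} \times SU_\tau(A) \to SU_\tau(A)$ at a point $(\varphi_0, u_0)$. Using that every automorphism is a $d_\tau$-isometry together with bi-invariance, I would estimate
\[
    d_\tau(\varphi(u), \varphi_0(u_0)) \le d_\tau(\varphi(u), \varphi(u_0)) + d_\tau(\varphi(u_0), \varphi_0(u_0)) = d_\tau(u, u_0) + d_\tau(\psi(u_0), u_0),
\]
where $\psi = \varphi_0^{-1}\varphi \to \mathrm{id}$ in the point-norm topology as $\varphi \to \varphi_0$. Thus it suffices to show $d_\tau(\psi(u_0), u_0) \to 0$ for a fixed decomposition $u_0 = e^{ih_1}\cdots e^{ih_n}$. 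Telescoping across the factors and cancelling common left and right factors by bi-invariance reduces this to
\[
    d_\tau(\psi(u_0), u_0) \le \sum_{j=1}^n d_\tau(e^{i\psi(h_j)}, e^{ih_j})\ ,
\]
so everything comes down to a single inequality of the form $d_\tau(e^{ih}, e^{ik}) \le C\,\|h - k\|$ for $h, k \in A^{sa}\cap\ker(\tau)$.

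This last estimate is the main obstacle, and the difficulty is that $h$ and $k$ may have large norm while only $\|h-k\|$ is small, so Lemma \ref{lem:estimatenorms} — which compares $d_\tau$ with the norm only near the identity — cannot be applied directly. The remedy is a rescaling argument: choosing $m$ so large that $\|h\|/m$ and $\|k\|/m$ are below some $\varepsilon \in (0, \tfrac{\pi}{2})$, the elements $e^{ih/m}$ and $e^{ik/m}$ lie in $\mathcal{V}_\varepsilon$, whence Lemma \ref{lem:estimatenorms} gives $d_\tau(e^{ih/m}, e^{ik/m}) \le \tfrac{\pi}{2m}\|h - k\|$. Since $(e^{ih/m})^m = e^{ih}$, a second telescoping over the $m$ factors (again using bi-invariance) yields $d_\tau(e^{ih}, e^{ik}) \le m\, d_\tau(e^{ih/m}, e^{ik/m}) \le \tfrac{\pi}{2}\|h - k\|$. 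Feeding this back gives $d_\tau(\psi(u_0), u_0) \le \tfrac{\pi}{2}\sum_j \|\psi(h_j) - h_j\|$, which tends to $0$ because point-norm convergence $\psi \to \mathrm{id}$ forces $\|\psi(h_j) - h_j\| \to 0$ for each fixed $h_j$. Once the rescaling inequality is in hand, the remaining arguments are routine manipulations with the bi-invariant metric.
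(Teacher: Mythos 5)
Your proof is correct. It shares the paper's overall skeleton: both exploit that every $\tau$-preserving automorphism acts as a $d_\tau$-isometry to reduce joint continuity to showing that $\psi(u_0)\to u_0$ in $d_\tau$ whenever $\psi\to\id{A}$ in point-norm, for a fixed decomposition $u_0=e^{ih_1}\cdots e^{ih_n}$ with trace-zero exponents. But you settle this key step by a genuinely different mechanism. The paper works with the single unitary $\psi(u_0)u_0^*$: since it tends to $1$ in norm, one takes its logarithm, argues via homotopy invariance of $\tilde{\Delta}_\tau$ (``following an argument as in Lemma \ref{lemma:exponentillengthsmall}'') that $\tau(\log(\psi(u_0)u_0^*))=0$, and then converts norm-smallness into $d_\tau$-smallness with Lemma \ref{lem:estimatenorms}. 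You instead telescope factor by factor using bi-invariance of $d_\tau$ and prove the global Lipschitz bound $d_\tau(e^{ih},e^{ik})\le\tfrac{\pi}{2}\|h-k\|$ on $A^{sa}\cap\ker(\tau)$ by the $m$-th-root rescaling trick, again finishing with Lemma \ref{lem:estimatenorms}. Your route is the more elementary and arguably the more robust one: the paper's trace-of-logarithm step is only sketched, and it needs some extra care, because the natural decomposition of $\psi(u_0)u_0^*$ has exponential length roughly $2\sum_j\|h_j\|$, which may exceed $\pi$, so the explicit null-homotopy of Lemma \ref{lemma:exponentillengthsmall} is not available verbatim (one must, for instance, use uniform convergence of $\psi$ on the compact set $\{e^{ith_1}\cdots e^{ith_n}\colon t\in[0,1]\}$ to keep the pointwise logarithms well defined along the homotopy). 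What the paper's approach buys in exchange is a direct comparison between $d_\tau(\psi(u_0),u_0)$ and the norm distance $\|\psi(u_0)u_0^*-1\|$, in the spirit of Lemma \ref{lem:continuity} later; your Lipschitz estimate is a clean quantitative by-product that could be reused elsewhere. Your treatment of the well-definedness of the action, the isometry property, and the two algebraic crossed-module axioms is also correct and slightly more complete than what the paper records explicitly.
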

\begin{proof}
    Let $(\alpha_n,u_n)$ be a sequence in $\Autt A \times SU_\tau(A)$ such that $\alpha_n$ converges to $\alpha$ and $u_n$ converges to $u$ in point norm and $d_\tau$ topology respectively. We want to show that $\alpha_n (u_n)$ converges to $\alpha(u)$ in $d_\tau$. Firstly, as $\alpha$ preserves the $d_\tau$ metric, we may assume that $\alpha=\id A$. Moreover, as $\alpha_n (u_n)=\alpha_n (u_n u^*)\alpha_n (u)$, and $\alpha_n (u_n u^*)$ converges to 1 in $d_\tau$, then it suffices to show that whenever $\alpha_n$ converges to $\id A$, then $\alpha_n(u)$ converges to $u$ in $d_\tau$.

    Let $u=e^{ih_1}\ldots e^{ih_k}$ with $h_i\in A^{sa}\cap \ker(\tau)$ for $1\leq i\leq k$. Then $\alpha_n(u)=e^{i\alpha_n(h_1)}\ldots e^{i\alpha_n(h_k)}$. Since $\alpha_n(u)u^*$ converges to $1$ in norm, then for large enough $n$ one can pick a cntinuous branch of the logarithm such that
    \[
    \alpha_n(u)u^*=e^{\log(\alpha_n(u)u^*)}
    \]
    \[
    \|\log(\alpha_n(u)u^*)\|<\pi/2
    \]
    and following an argument as in Lemma \ref{lemma:exponentillengthsmall}
    \[
    \tau(\log(\alpha_n(u)u^*))=\sum_{i=1}^k \tau(h_i)+\tau(\alpha_n(h_i))=0.
    \]
    Thus it follows from Lemma \ref{lem:estimatenorms} that $\alpha_n(u)$ converges to $u$ in $d_\tau$.
\end{proof} 
The map $\Ad:PU(A)\rightarrow \Autt A$ also defines a crossed module which we denote by $P\cG^{\tau}_A$.
\begin{definition}\label{defn:morphicrossedmodules}
Let $(\partial_1, H_1,G_1)$ and $(\partial_2, H_2, G_2)$ be crossed modules. A morphism of crossed modules $\Phi=(\phi_1,\phi_2): (\partial_1, H_1,G_1)\rightarrow (\partial_2, H_2, G_2)$ consists of a pair of continuous group homomorphisms $\phi_1:H_1\rightarrow H_2$ and $\phi_2: G_1\rightarrow G_2$ satisfying
\begin{enumerate}[(i)]
		\item \label{mcm:i} $\partial_2\circ \phi_1=\phi_2 \circ \partial_1$,
		\item \label{mcm:ii} $\phi_1(\alpha(u))=\phi_2(\alpha)(\phi_1(u))$ for all $\alpha\in G_1$, $u\in H_1$.
	\end{enumerate}
\end{definition}
Moreover, if $\cG_0$, $\cG_1$ and $\cG_2$ are crossed modules we call a sequence 
\[
\cG_0\xrightarrow{\Phi} \cG_1 \xrightarrow{\Psi} \cG_2
\]
a \emph{short exact sequence} if the two canonically associated sequences of groups are short exact sequences of topological groups in the sense of Section \ref{sec:prelim}.
\par In particular there are short exact sequences of crossed modules 
\begin{gather*}
(ZU(A) \to 1)\longrightarrow \cG_A\longrightarrow P\cG_A, \\
(ZSU_\tau(A) \to 1)\longrightarrow S\cG_\tau(A)\rightarrow P\cG_A,
\end{gather*}
as $ZU(A)\rightarrow U(A)\rightarrow PU(A)$ and $ZSU_\tau(A)\rightarrow SU_\tau(A)\rightarrow PU(A)$ are Hurewicz fibrations by \cite[II.7 Example 4, II.7 Corollary 14]{SPA66} and Lemma \ref{lem:topologyproj}. Recall that $q:\covU{A}\rightarrow U(A)$ is the bundle map $q([\lambda])=\lambda(1)$ and denote by $r:U(A)\rightarrow PU(A)$ the canonical quotient map.
\begin{lemma} \label{lem:ses-covU}
	The composition $\pi = r \circ q \colon \covU{A} \to PU(A)$ gives rise to a short exact sequence
	\[
	\begin{tikzcd}
		K_0^\#(A) \ar[r] \ar[d] & \covU{A} \ar[d,"\Ad{}"] \ar[r,"\pi"] & PU(A) \ar[d,"\Ad{}"] \\
		1 \ar[r] & \Aut{A} \ar[equal]{r}	 & \Aut{A}
	\end{tikzcd}
	\]
	of crossed modules, where the first row is a central extension of Banach Lie groups. Moreover, $\pi \colon \covU{A} \to PU(A)$ is a principal $K_0^\#(A)$-bundle. 
\end{lemma}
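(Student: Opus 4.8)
The plan is to understand the map $\pi = r \circ q$ through its kernel and then to realise it as a principal bundle; the Hurewicz fibration property (hence short exactness of the crossed modules) will drop out of the bundle structure. First I would note that $\pi([\gamma]) = r(\gamma(1)) = 1$ in $PU(A)$ exactly when $\gamma(1) \in ZU(A)$, so $\ker \pi$ is precisely the subgroup $K_0^\#(A) \subset \covU{A}$ of classes of paths ending in the centre. Since $U(A) = U(A)_0$ is path connected, both $q$ and $r$ are surjective, hence so is $\pi$, and the top row is exact as abstract groups. To see it is a \emph{central} extension, take $[\gamma] \in K_0^\#(A)$ and $[\delta] \in \covU{A}$; by \cref{lem:adjoint_action} we have $[\gamma][\delta][\gamma]^{-1} = \widetilde{\Ad}_{[\gamma]}([\delta]) = \Ad_{\gamma(1)}([\delta])$, and since $\gamma(1) \in ZU(A)$ the automorphism $\Ad_{\gamma(1)}$ is the identity and therefore acts trivially on $\covU{A}$. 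Thus $[\gamma]$ is central.

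Next I would verify the diagram is a short exact sequence of crossed modules. The left column is the crossed module $(K_0^\#(A) \to 1)$, which is legitimate because $K_0^\#(A)$ is abelian, while the middle and right columns are $\wcG_A$ and $P\cG_A$. The right-hand square commutes because $\Ad(\pi([\gamma])) = \Ad_{\gamma(1)} = \widetilde{\Ad}_{[\gamma]}$, and the two equivariance conditions of \cref{defn:morphicrossedmodules} are immediate: $\widetilde{\Ad}$ restricted to $K_0^\#(A)$ is trivial (central unitaries), and $\pi(\alpha \circ \gamma) = r(\alpha(\gamma(1))) = \alpha(\pi([\gamma]))$ for the induced $\Aut{A}$-action on $PU(A)$. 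The bottom row $1 \to \Aut{A} \xrightarrow{\id} \Aut{A} \to 1$ is trivially a short exact sequence of topological groups, so everything reduces to showing that the top row is one, that is, that $\pi$ is a Hurewicz fibration; this follows once $\pi$ is exhibited as a principal bundle.

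The main step, and the one genuinely nontrivial point, is the principal bundle structure. The central subgroup $K_0^\#(A)$ acts on $\covU{A}$ by multiplication; this action is free, and since $K_0^\#(A) = \ker\pi$ its orbits are exactly the fibres of $\pi$, so it suffices to produce local sections. As $PU(A) = U(A)/ZU(A)$ is the quotient of a Banach Lie group by a closed normal Banach Lie subgroup, $r$ is a locally trivial principal $ZU(A)$-bundle by \cite[Thm.~1]{paper:GloecknerNeeb}; hence each point of $PU(A)$ has a neighbourhood $V$, which we may take contractible since $PU(A)$ is a Banach manifold, carrying a continuous section $s \colon V \to U(A)$ of $r$. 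As $V$ is simply connected and $q \colon \covU{A} \to U(A)$ is the universal covering, $s$ lifts to a continuous $\tilde s \colon V \to \covU{A}$ with $q \circ \tilde s = s$, so $\pi \circ \tilde s = r \circ s = \id_V$. Then $(v,k) \mapsto \tilde s(v)\cdot k$ is a $K_0^\#(A)$-equivariant homeomorphism $V \times K_0^\#(A) \to \pi^{-1}(V)$ (with inverse $x \mapsto (\pi(x), \tilde s(\pi(x))^{-1}x)$), so $\pi$ is a principal $K_0^\#(A)$-bundle and in particular a Hurewicz fibration. Finally $K_0^\#(A)$ is itself a Banach Lie group, being the kernel of the submersion $\pi$ between Banach Lie groups, with Lie algebra $\ker(d\pi)$ equal to the self-adjoint part of the centre $Z(A)$; this makes the top row a central extension of Banach Lie groups. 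The delicate part is thus exactly the local triviality of $\pi$, which rests on marrying the local sections from the Banach Lie quotient $r$ with covering-space lifting through $q$ over a contractible neighbourhood; the remaining verifications are direct applications of \cref{lem:adjoint_action} and the definitions.
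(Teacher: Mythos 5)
Your proof is correct, and it diverges from the paper's exactly at the point you single out as the nontrivial one. The centrality argument is the same as the paper's (both rest on \cref{lem:adjoint_action} together with the fact that paths in $K_0^\#(A)$ end in $ZU(A)$, so the corresponding inner automorphism is trivial), and your explicit verifications of the kernel identification, surjectivity, and the morphism-of-crossed-modules conditions are simply left implicit in the paper. For the principal bundle statement, however, the paper constructs no local sections: it observes that $K_0^\#(A)$ is a closed central Banach Lie subgroup of $\covU{A}$ (disconnected if $\pi_1(U(A)) \neq 1$) with the same Banach Lie algebra as $ZU(A)$, and then quotes \cite[Remark II.5]{paper:GloecknerNeeb}, which gives the principal bundle structure on the quotient $\covU{A} \to \covU{A}/K_0^\#(A) \cong PU(A)$ directly. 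You instead transfer the bundle structure of $r \colon U(A) \to PU(A)$ upward through the universal covering $q$: sections of $r$ over contractible charts lift through $q$ because such charts are simply connected and locally path connected, and the lifted sections trivialise $\pi$. Your route is more self-contained (it needs only local triviality of the Lie-group quotient $r$, which the paper has already invoked to make $PU(A)$ a Banach Lie group), and it makes visible why $\pi$ is a fibration; the paper's route is shorter and produces the Lie-theoretic description of $K_0^\#(A)$ in the same breath. Two small points to tighten in your write-up: local triviality yields a \emph{Hurewicz} fibration (as required by the paper's definition of a short exact sequence of topological groups) only after noting that the base $PU(A)$ is metrisable, hence paracompact, so Hurewicz's uniformisation theorem applies; and the Lie algebra in question is the space of skew-adjoint central elements $iZ(A)^{sa}$ (the Lie algebra of $ZU(A)$), not the self-adjoint part of $Z(A)$.
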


\begin{proof}
	Let $[\delta] \in K_0^\#(A)$ and $[\gamma] \in \covU{A}$. By Lemma \ref{lem:adjoint_action} we have that
	\[
		\Ad_{[\delta]}([\gamma]) = \Ad_{\delta(1)}([\gamma]) = [\gamma],
	\]
	since $\delta(1) \in ZU(A)$. This shows that $K_0^\#(A)$ is a central in $\covU{A}$. It is also a normal Lie subgroup of $\covU{A}$ (disconnected if $\pi_1(U(A)) \neq 1$) with the same Banach Lie algebra as $ZU(A)$. This implies that the first row of the diagram in the lemma is a central extension and reveals the whole diagram as a short exact sequence of crossed modules. The statement about principal bundles follows from \cite[Remark II.5]{paper:GloecknerNeeb}.
\end{proof}
\subsection{Cohomology with coefficients in a crossed module} \label{ssec:Cohomology}
Let $\Gamma$ be a discrete group. A $1$-cocycle on $\Gamma$ with values in $\cG$ consists of a pair $(\alpha, u)$, where $\alpha \colon \Gamma \to G$ and $u \colon \Gamma \times \Gamma \to H$ satisfying the following conditions
\begin{align}
	\alpha_{g} \, \alpha_{h} &= \partial(u_{g,h}) \, \alpha_{gh}\ ,\label{eqn:alpha}\\
	\alpha_g(u_{h,k})\,u_{g,hk} &= u_{g,h}\,u_{gh,k} \label{eqn:u}\ .
\end{align}
Denote the set of all $1$-cocycles on $\Gamma$ with values in $\cG$ by $Z^1(\Gamma, \cG)$. Two such cocycles $(\alpha^{(0)}, u^{(0)})$ and $(\alpha^{(1)}, u^{(1)})$ are cohomologous if there exists an element $\gamma \in G$ and a map $w \colon \Gamma \to H$ with the properties:
\begin{align}
	\alpha^{(1)}_g &= \partial w_g\,\gamma\,\alpha^{(0)}_g\,\gamma^{-1} \label{eqn:alpha12}\\
	u^{(1)}_{g,h} &= w_g \gamma(\alpha^{(0)}_g(\gamma^{-1}(w_h))u^{(0)}_{g,h})w_{gh}^{-1} \label{eqn:u12}  	
\end{align}
It is straightforward to check that being cohomologous defines an equivalence relation on the set of all $1$-cocycles. 

\begin{definition} \label{def:cohomology}
	Let $\cG = (H,G, \partial)$ be a topological crossed module. The pointed set defined by
	\[
		H^1(\Gamma, \cG) = Z^1(\Gamma, \cG)/\!\sim
	\]
	is called the cohomology of $\Gamma$ with coefficients in the crossed module $\cG$. We usually think of $H^1(\Gamma,\cG)$ as a pointed set with the trivial cocycle as the distinguished point.
\end{definition}
\begin{remark}
	The definition we have given here is compatible with the one in \cite{paper:Noohi-coh}. To turn as cocycle $(\alpha, u)$ into a cocycle $(p, \varepsilon)$ as in \cite[Section 4]{paper:Noohi-coh} we define
	\[
		p_g = \alpha_{g^{-1}}^{-1} \quad \text{and} \quad \varepsilon_{g,h} = u_{h^{-1}, g^{-1}}^* .
	\] 
	Since the equivalence relations are compatible as well, the above provides a bijection between the cohomology sets $H^1(\Gamma, \cG)$ from \cref{def:cohomology} and the ones in  \cite[Section 4]{paper:Noohi-coh}.
\end{remark}
In case of $\cG_A$ a cocycle $(\alpha, u)$ with $\alpha \colon \Gamma \to \Aut{A}$ and $u \colon \Gamma \times \Gamma \to U(A)$ is exactly the same as a cocycle action of $\Gamma$ on $A$. It is easy to check that the equivalence relation used to define $H^1(\Gamma, \cG_A)$ corresponds to cocycle conjugacy with respect to this identification. Therefore we have a natural bijection of pointed sets
\[
	H^1(\Gamma,\cG_A) \cong \{\text{cocycle actions of }\Gamma \text{ on }A\}/\sim_{c.c}\ .
\]
We have a similar interpretation of cohomology with coefficients in $P\cG_A$.

\begin{prop}
	Let $\Gamma$ be a discrete group and let $A$ be a unital $C^*$-algebra. Let $q \colon \Aut{A} \to \Out{A}$ be the quotient homomorphism. The map $(\alpha, [u]) \mapsto q\circ\alpha$ provides a natural isomorphism of pointed sets
	\[
		H^1(\Gamma, P\cG_A) \cong \{ \text{$\Gamma$-kernels of $A$} \}/\!\sim_c.
	\] 
	Similarly, if $\tau\in T(A)$ then $(\alpha, [u]) \mapsto q\circ\alpha$ provides a natural isomorphism of pointed sets
    \[ 
    H^1(\Gamma, P\cG_A^{\tau})\cong \{\text{$\tau$-preserving $\Gamma$-kernels of $A$} \}/\!\sim_{\tau.c}.
    \]
\end{prop}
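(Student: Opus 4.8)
The plan is to exploit the fact that for the crossed module $P\cG_A$ the boundary map $\partial = \Ad \colon PU(A) \to \Aut{A}$ is \emph{injective} with image $\Inn{A}$, since $\ker(\Ad \colon U(A) \to \Aut{A}) = ZU(A)$ and $PU(A) = U(A)/ZU(A)$. This means the second component $[u]$ of a cocycle $(\alpha, [u])$ is completely determined by the first: from \eqref{eqn:alpha} we have $\partial([u_{g,h}]) = \alpha_g\alpha_h\alpha_{gh}^{-1} \in \Inn{A}$, and injectivity of $\Ad$ on $PU(A)$ recovers $[u_{g,h}]$. Conversely, given any $\alpha \colon \Gamma \to \Aut{A}$ with $\alpha_g\alpha_h\alpha_{gh}^{-1} \in \Inn{A}$ for all $g,h$, setting $[u_{g,h}] = \Ad^{-1}(\alpha_g\alpha_h\alpha_{gh}^{-1})$ yields a cocycle. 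The first condition \eqref{eqn:alpha} holds by construction, and I would verify that the second condition \eqref{eqn:u} is automatic by applying $\partial$ to both sides, checking equality in $\Inn{A}$ using associativity of $\alpha_g\alpha_h\alpha_k$ together with the crossed module axiom $\partial(\alpha_g([u_{h,k}])) = \alpha_g\,\partial([u_{h,k}])\,\alpha_g^{-1}$, and then invoking injectivity of $\Ad$ once more. This is exactly the statement that the obstruction $\tilde\alpha_g(u_{h,k})u_{g,hk}u_{gh,k}^*u_{g,h}^*$ lands in $ZU(A) = \ker(\Ad)$ and hence vanishes in $PU(A)$.

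With this identification, cocycles of $P\cG_A$ correspond bijectively to set-theoretic lifts $\alpha \colon \Gamma \to \Aut{A}$ of $\Gamma$-kernels. First I would show $(\alpha, [u]) \mapsto q\circ\alpha$ is well-defined and surjective: applying $q$ to \eqref{eqn:alpha} kills the inner factor $\Ad_{u_{g,h}}$, so $q\circ\alpha$ is a homomorphism into $\Out{A}$; conversely any $\Gamma$-kernel admits a lift to $\Aut{A}$, which produces a cocycle as above. Next I would check that the equivalence relations match. Given the cohomology relation \eqref{eqn:alpha12}--\eqref{eqn:u12} with data $(\gamma, w)$, applying $q$ to \eqref{eqn:alpha12} removes $\partial(w_g) = \Ad_{w_g}$ and shows $q\circ\alpha^{(1)}$ and $q\circ\alpha^{(0)}$ are conjugate by $\gamma \in \Aut{A}$, i.e.\ $\sim_c$.

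For injectivity on classes I would run this in reverse: if $q\circ\alpha^{(0)}$ and $q\circ\alpha^{(1)}$ are conjugate by $\varphi \in \Aut{A}$, set $\gamma = \varphi$; then $\alpha^{(1)}_g$ and $\gamma\alpha^{(0)}_g\gamma^{-1}$ differ by an inner automorphism, so $\alpha^{(1)}_g = \Ad_{w_g}\gamma\alpha^{(0)}_g\gamma^{-1}$ for a suitable $[w_g] \in PU(A)$, giving \eqref{eqn:alpha12}. To obtain \eqref{eqn:u12} I would define $[v_{g,h}]$ to be its right-hand side; since the transformation \eqref{eqn:alpha12}--\eqref{eqn:u12} sends cocycles to cocycles, $(\alpha^{(1)}, [v])$ is again a cocycle, and because its $u$-part is forced by $\alpha^{(1)}$ via injectivity of $\Ad$, we get $[v_{g,h}] = [u^{(1)}_{g,h}]$, so the two cocycles are cohomologous. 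Naturality in $\Gamma$ is immediate, as both sides pull back along group homomorphisms by precomposition and $(\alpha,[u]) \mapsto q\circ\alpha$ commutes with precomposition.

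The tracial statement follows the same blueprint with $\Aut{A}$ replaced by $\Autt{A}$. The only extra points are that inner automorphisms automatically preserve any trace, so $\Ad \colon PU(A) \to \Autt{A}$ remains well-defined and injective with image $\Inn{A}$; that $q\circ\alpha$ is $\tau$-preserving when $\alpha$ takes values in $\Autt{A}$; and that a $\tau$-preserving $\Gamma$-kernel admits lifts in $\Autt{A}$, so surjectivity persists. The conjugating automorphism $\gamma = \varphi$ produced in the injectivity step now lies in $\Autt{A}$, matching $\sim_{\tau.c}$. I expect the main obstacle to be bookkeeping in the injectivity direction---specifically, verifying cleanly that the forced $u$-component argument identifies $[v]$ with $[u^{(1)}]$---rather than any conceptual difficulty, since the whole proof rests on the single structural fact that $\Ad$ is injective on $PU(A)$ and that the lifting obstruction lives precisely in the kernel we have quotiented out.
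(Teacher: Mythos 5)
Your proof is correct and takes essentially the same approach as the paper: both hinge on the fact that $\Ad$ is injective on $PU(A)$ (the paper phrases this as ``$[u_{g,h}]$ is uniquely fixed by this requirement'' and ``therefore we must have $[v_{g,h}] = \dots$''), with associativity of composition yielding the cocycle identity \eqref{eqn:u} and application of $q$ matching the two equivalence relations. Your explicit organisation around injectivity of $\partial$ --- identifying cocycles with set-theoretic lifts and using forced uniqueness of the $[u]$-component in the injectivity step --- is only a more structural packaging of the paper's direct computations.
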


\begin{proof}
We only prove the first statement as the proof of the second statement follows in the same way. Let $(\alpha,[u])$ be a $1$-cocycle with values in $P\cG_A$. By Equation \ref{eqn:alpha} the map $q \circ \alpha \colon \Gamma \to \Out{A}$ is a group homomorphism. By Equation \ref{eqn:alpha12} the equivalence relation on cocycles is compatible with conjugation. Hence,
	\[
		H^1(\Gamma, P\cG_A) \to \{ \text{$\Gamma$-kernels of $A$} \}/\!\sim_c
	\]  
	is well-defined. Let $\bar{\alpha} \colon \Gamma \to \Out{A}$ be a group homomorphism. For each $g \in \Gamma$ choose a lift $\alpha_g \in \Aut{A}$. For each pair $g,h \in \Gamma$ there is projective unitary $[u_{g,h}] \in PU(A)$ such that 
	\[
		\alpha_g \circ \alpha_h = \Ad_{u_{g,h}} \circ\, \alpha_{gh}\ .
	\]
	Notice that $[u_{g,h}]$ is uniquely fixed by this requirement. A comparison of $(\alpha_g \circ \alpha_h) \circ \alpha_k$ with $\alpha_g \circ (\alpha_h \circ \alpha_k)$ shows that the cocycle condition \cref{eqn:u} is satisfied. Therefore $(\alpha,u)$ is a $1$-cocycle mapping to $\bar{\alpha}$ showing that the above map is surjective.
	
	Let $(\alpha, [u])$ and $(\beta,[v])$ be $1$-cocycles. Assume that there exists an element $\bar{\gamma} \in \Out{A}$ such that 
	\[
		q \circ \beta = \bar{\gamma} \circ (q \circ \alpha) \circ \bar{\gamma}^{-1}\ .
	\]
	Choose a lift of $\bar{\gamma}$ to $\gamma \in \Aut{A}$. The equation $q \circ \beta = q \circ (\gamma \circ \alpha \circ \gamma^{-1})$ implies the existence of a map $[w] \colon \Gamma \to PU(A)$ such that 
	\[
		\beta_g = \Ad_{w_g} \circ\ (\gamma \circ \alpha_g \circ \gamma^{-1})
	\]
	for all $g \in \Gamma$. Observe that
	\begin{align*}
		\beta_g \circ \beta_h &=  \Ad_{w_g} \circ\ (\gamma \circ \alpha_g \circ \gamma^{-1}) \circ \Ad_{w_h} \circ\ (\gamma \circ \alpha_h \circ \gamma^{-1}) \\
		&= \Ad_{w_g \gamma(\alpha_g(\gamma^{-1}(w_h))u_{g,h})} \gamma\circ \alpha_{gh}\circ \gamma^{-1} \\
		&= \Ad_{w_g \gamma(\alpha_g(\gamma^{-1}(w_h))u_{g,h})w_{gh}^*} \beta_{gh}\ .
	\end{align*}
	Therefore we must have $[v_{g,h}] = [w_g \gamma(\alpha_g(\gamma^{-1}(w_h))u_{g,h})w_{gh}^*] \in PU(A)$ and the pair $(\gamma, [w])$ satisfies Equations \eqref{eqn:alpha12} and \eqref{eqn:u12}. Hence, $(\alpha, [u])$ and $(\beta, [v])$ are equivalent and the map $(\alpha,[u]) \mapsto q \circ \alpha$ is injective as well.
\end{proof}
Let $H$ be an abelian group. It is a consequence of Equation \ref{eqn:u} that a $1$-cocycle in $Z^1(\Gamma, H \to 1)$ is actually a $2$-cocycle on $\Gamma$ with values in $H$. Moreover, Equation \ref{eqn:u12} implies that two such $1$-cocycles are equivalent if and only if they differ by a $2$-coboundary in group cohomology with coefficients in $B$. Hence, we have a natural isomorphism  of pointed sets
\begin{equation} \label{eqn:abel-iso}
	H^1(\Gamma, H \to 1) \cong H^2(\Gamma, H)\ .
\end{equation}
 As discussed for example in \cite{paper:Noohi-coh}, one can also make sense of lower cohomology pointed sets in degree $-1$ and $0$. Moreover, there is an analogue of the classical long exact sequence of cohomology also in the case of crossed module coefficients. However, as the pointed sets $H^1(\Gamma, \cG)$ are not a group in general, we first recall the right notion of an exact sequence in the category of pointed sets.
 \begin{definition}
     Let $f:(X,x_0)\rightarrow (Y,y_0)$ and $g:(Y,y_0)\rightarrow (Z,z_0)$ be maps of pointed sets then a sequence
     \[
	\begin{tikzcd}
		X \ar[r,"f"] & Y \ar[r,"g"] & Z
	\end{tikzcd}
\]
is said to be exact at $Y$ if and only if $g^{-1}(z_0) = f(X)$.
 \end{definition}
 If $\cG_{-1}\xrightarrow{\Psi} \cG_0\xrightarrow{\Phi} \cG_1$ is a short exact sequence of crossed modules, then it follows from \cite[Proposition 11.3]{paper:Noohi-coh} that one has a long exact sequence of pointed sets finishing at $H^1(\Gamma,\cG_1)$. When $\cG_{-1}$ is an abelian crossed module one can extend the associated long exact sequence one more step (\cite[Remark 11.6]{paper:Noohi-coh}). Before we show this we need to introduce some notation.
\par In the short exact sequence above we let $\cG_0= (\partial_0:H_0\rightarrow G_0)$, $\cG_1= (\partial_1: H_1\rightarrow G_1)$, $\cG_{-1}=(H_{-1}\rightarrow 1)$ and $\Phi=(\phi_0,\phi_1): \cG_0\rightarrow \cG_1$. Observe that $\phi_1$ is an isomorphism. For simplicity, we will assume that $G_0=G_1$ and that $\phi_1$ is the identity. We identify $H_{-1}$ with the kernel of $\phi_0$ and thus one may assume that $\Psi$ is the inclusion of $H_{-1}$ into $H_0$. Note that $\partial_0(k)=\partial_1(\phi_0(k))=0$  for all $k\in H_{-1}$ so $H_{-1}\rightarrow H_0\rightarrow H_1$ is a central extension. 
\par Any $1$-cocycle $(\alpha,u)\in Z^1(\Gamma,\cG_1)$ induces an action of $\Gamma$ on $H_{-1}$ by $g\cdot k=\alpha_g(k)$. Indeed $\phi_0(\alpha_g(k))=\alpha_g(\phi_0(k))=0$ so $\alpha_g(k)\in H_{-1}$ for all $g\in \Gamma$. We will denote by $H_{-1}^{(\alpha,u)}$ the $\Gamma$ module induced by $(\alpha,u)$. The abelian group 
     \[
     \underset{\sim \cG_1}{\bigoplus}H^3(\Gamma, H_{-1}).
     \]
     is given by first taking the direct sum of $H^3(\Gamma,H_{-1}^{(\alpha,u)})$ over all $(\alpha,u)\in Z^1(\Gamma,\cG_1)$, and then taking the quotient of this direct sum by the action of $G_1$, where $\gamma\in G_1$ acts by \footnote{We identify $H_{-1}^{(\alpha,u)}$ with $H_{-1}^{(\beta,v)}$ if the induced actions by $\Gamma$ are the same. So strictly speaking we only take the direct sum over equivalence classes of elements of $Z^1(\Gamma,\cG_1)$ where two cocycles are identified if they induce the same module structure on $H_{-1}$.}
     \begin{align*}
     H^3(\Gamma, H_{-1}^{(\alpha,u)})&\rightarrow H^3(\Gamma, H_{-1}^{(\gamma\alpha\gamma^{-1},\gamma(u))})\\
     \omega_{g,h,k}&\mapsto (\gamma\cdot \omega)_{g,h,k}=\gamma(\omega_{g,h,k}). 
     \end{align*}
     This construction comes equipped with canonical group homomorphisms
     \[
     \iota_{(\alpha,u)}:H^3(\Gamma, H_{-1}^{(\alpha,u)})\rightarrow \underset{\sim \cG_1}{\bigoplus}H^3(\Gamma, H_{-1})
     \]
     which satisfy $\iota_{\gamma\alpha\gamma^{-1},\gamma(u)}(\gamma \cdot \omega)=\gamma\cdot \iota_{\alpha,u}(\omega)$ for every $\gamma\in G_1$ and $\omega\in H^3(\Gamma,H_{-1}^{(\alpha,u)})$.
 \begin{lemma}\label{lem:longexact}
     Let $\cG_0= (\partial_0:H_0\rightarrow G_0)$ and $\cG_1= (\partial_1: H_1\rightarrow G_0)$ be crossed modules and $\Gamma$ a discrete group. Let $\Phi=(\phi_0,\id): \cG_0\rightarrow \cG_1$ be a morphism of crossed modules and $H_{-1}$ an abelian group fitting into a short exact sequence of crossed modules
     \[
	\begin{tikzcd}
		H_{-1} \ar[r] \ar[d] & H_0 \ar[d,"\partial_0"] \ar[r,""] & H_1 \ar[d,"\partial_1"] \\
		1 \ar[r] & G_0 \ar[equal]{r}	 & G_0
	\end{tikzcd}
     \]
     then there is an exact sequence of pointed sets
     \[
		\begin{tikzcd}
			H^2(\Gamma, H_{-1}) \ar[r] & H^1(\Gamma, \cG_0) \ar[r] & H^1(\Gamma,\cG_1) \ar[r] & \underset{\sim \cG_1}{\bigoplus}H^3(\Gamma, H_{-1}) \ ,
		\end{tikzcd}
	\]
     which is both natural in $\Gamma$ and in the coefficient exact modules.
 \end{lemma}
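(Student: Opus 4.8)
The plan is to obtain the first portion of the sequence directly from Noohi's machinery and to construct the final boundary map by hand, following the template of the lifting obstruction in \cref{def:obalpha}. Concretely, the short exact sequence of crossed modules $(H_{-1}\to 1)\to \cG_0\to \cG_1$ feeds into the long exact sequence of pointed sets of \cite[Proposition 11.3]{paper:Noohi-coh}, which already yields exactness at $H^1(\Gamma,\cG_0)$; combining this with the natural identification $H^1(\Gamma, H_{-1}\to 1)\cong H^2(\Gamma,H_{-1})$ from \eqref{eqn:abel-iso} produces the initial three terms $H^2(\Gamma,H_{-1})\to H^1(\Gamma,\cG_0)\to H^1(\Gamma,\cG_1)$. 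It remains to define the boundary map into $\bigoplus_{\sim\cG_1}H^3(\Gamma,H_{-1})$ and to establish exactness at $H^1(\Gamma,\cG_1)$; this is the step requiring genuine work and is where \cite[Remark 11.6]{paper:Noohi-coh} must be upgraded to account for the varying module structures.

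To define $\ob$, I would start from a cocycle $(\alpha,u)\in Z^1(\Gamma,\cG_1)$ and choose, for each pair $g,h$, a lift $\tilde u_{g,h}\in H_0$ of $u_{g,h}$ along the surjection $\phi_0\colon H_0\to H_1$. Setting
\[
\omega_{g,h,k}=\alpha_g(\tilde u_{h,k})\,\tilde u_{g,hk}\,\tilde u_{gh,k}^{-1}\,\tilde u_{g,h}^{-1},
\]
the cocycle condition \eqref{eqn:u} in $H_1$ forces $\phi_0(\omega_{g,h,k})=1$, so $\omega$ takes values in $H_{-1}$. A direct four-fold associativity computation, identical in structure to the one verifying that $\ob(\alpha)$ in \cref{def:obalpha} is a $3$-cocycle, shows $\omega$ is a $3$-cocycle for the module $H_{-1}^{(\alpha,u)}$, and since $H_{-1}$ is central in $H_0$, replacing $\tilde u$ by another lift changes $\omega$ by a $3$-coboundary. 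Thus $[\omega]\in H^3(\Gamma,H_{-1}^{(\alpha,u)})$ is well defined, and I would set $\ob([\alpha,u])=\iota_{(\alpha,u)}([\omega])$. The crucial well-definedness claim is that cohomologous cocycles yield the same class in the quotient: if $(\beta,v)$ is obtained from $(\alpha,u)$ via $(\gamma,w)$ as in \eqref{eqn:alpha12}--\eqref{eqn:u12}, then lifting $w$ to $H_0$ and telescoping shows $[\omega_{(\beta,v)}]=\gamma\cdot[\omega_{(\alpha,u)}]$ up to a coboundary, so the two images agree after passing to the $G_0$-quotient $\bigoplus_{\sim\cG_1}H^3(\Gamma,H_{-1})$. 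This is precisely why the quotient receptacle, rather than a single $H^3$, is needed, exactly as in \eqref{eqn: obmap}.

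For exactness at $H^1(\Gamma,\cG_1)$ I would argue both inclusions. If $[\alpha,u]$ lies in the image of $\Phi_\ast$, choose a representative of the form $\Phi(\alpha,\tilde u)$ with $(\alpha,\tilde u)\in Z^1(\Gamma,\cG_0)$; then $\tilde u$ is a genuine lift satisfying \eqref{eqn:u} in $H_0$, so $\omega\equiv 1$ and $\ob([\alpha,u])=0$. Conversely, suppose $\ob([\alpha,u])=0$. Since the $G_0$-orbit of the zero class is just the zero class, vanishing in the quotient forces $[\omega]=0$ in $H^3(\Gamma,H_{-1}^{(\alpha,u)})$ for the given representative, i.e.\ $\omega=\delta c$ for some $c\colon\Gamma\times\Gamma\to H_{-1}$. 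Replacing $\tilde u_{g,h}$ by $c_{g,h}^{-1}\tilde u_{g,h}$ then produces a lift still satisfying \eqref{eqn:alpha} (because $\partial_0$ annihilates the central correction $c_{g,h}^{-1}$) and now also \eqref{eqn:u}; hence $(\alpha,c^{-1}\tilde u)\in Z^1(\Gamma,\cG_0)$ maps to $(\alpha,u)$, witnessing $[\alpha,u]\in\mathrm{im}(\Phi_\ast)$. Naturality in $\Gamma$ and in the coefficient modules is then immediate, since every ingredient, namely the lifts, the formula for $\omega$, and the assembly maps $\iota_{(\alpha,u)}$, is functorial.

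The main obstacle I anticipate is the well-definedness of $\ob$ on $H^1(\Gamma,\cG_1)$, namely the telescoping computation showing that applying an equivalence $(\gamma,w)$ transforms $[\omega]$ exactly by the prescribed $G_0$-action up to a coboundary. Care is required here because the module structure on $H_{-1}$ itself changes under conjugation, so the comparison takes place between different summands of the direct sum; one must check that the coboundary contributed by the unitary correction $w$ is genuinely trivial in cohomology and does not interfere with the conjugation twist. A secondary point worth spelling out is that the target $\bigoplus_{\sim\cG_1}H^3(\Gamma,H_{-1})$ should be read as the orbit pointed set of the $G_0$-action, whose distinguished element is the class of $0$; since each summand includes injectively and the preimage of this distinguished element under the quotient map is $\{0\}$, the vanishing $\ob([\alpha,u])=0$ is equivalent to $[\omega]=0$, which is exactly what the converse inclusion in the exactness argument requires.
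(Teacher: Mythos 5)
Your proposal is correct and follows essentially the same route as the paper: exactness at $H^1(\Gamma,\cG_0)$ via \cite[Proposition 11.3]{paper:Noohi-coh} together with \eqref{eqn:abel-iso}, the boundary map defined by lifting $u_{g,h}$ through $\phi_0$ and forming $\omega_{g,h,k}=\alpha_g(\tilde u_{h,k})\tilde u_{g,hk}\tilde u_{gh,k}^{-1}\tilde u_{g,h}^{-1}$, well-definedness via centrality of $H_{-1}$ and the $G_0$-action on the direct sum, and exactness at $H^1(\Gamma,\cG_1)$ by correcting the lift with the coboundary. The telescoping computation you flag as the main obstacle (showing a coboundary $(\gamma,w)$ transforms $[\omega]$ exactly by the $G_0$-action) is precisely the explicit calculation with $r_{g,h}=s_g\varphi\alpha_g\varphi^{-1}(s_h)\varphi(t_{g,h})s_{gh}^{-1}$ that the paper carries out, and it works exactly as you describe.
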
 
 \begin{proof}
     The exactness in $H^1(\Gamma,\cG_0)$ follows from \cite[Proposition 11.3]{paper:Noohi-coh} and the isomorphism in \eqref{eqn:abel-iso}. We now define the final map in the exact sequence of cohomology sets. Let
     \begin{align}
     \theta: H^1(\Gamma,\cG_1)&\rightarrow \underset{\sim \cG_1}{\bigoplus}H^3(\Gamma, H_{-1}).\label{eqn:theta}\\
     (\alpha,u)&\mapsto \iota_{(\alpha,u)}(\alpha_g(v_{h,k})v_{g,hk}v_{gh,k}^{-1}v_{g,h}^{-1})\notag
     \end{align}
     where $v_{g,h}\in H_0$ for $g,h\in \Gamma$ is a choice of lift of $u_{g,h}$ i.e. $\phi_0(v_{g,h})=u_{g,h}$. First note that by precisely the same argument  as in \cite[Lemma 7.1]{EIMA47} the $3$-cochain $\omega_{g,h,k}=\alpha_g(v_{h,k})v_{g,hk}v_{gh,k}^{-1}v_{g,h}^{-1}$ defines an element of $Z^3(\Gamma, H_{-1}^{(\alpha,u)})$. 
     Note also that if $v'_{g,h}\in H_0$ is another family such that $\phi_0(v_{g,h}')=u_{g,h}$, then there exists $\lambda_{g,h}\in H_{-1}$ such that $\lambda_{g,h}v_{g,h}=v_{g,h}'$. Therefore 
     \[
     \alpha_g(v_{h,k}')v_{g,hk}'v_{gh,k}'^{-1}v_{g,h}'^{-1}=\partial \lambda_{g,h,k} \omega_{g,h,k}
     \]
     and so $[\omega]\in H^3(\Gamma, H_{-1}^{(\alpha,u)})$ does not depend on the choice of lift $v$. 
     \par We now show that $\theta$ is well defined. If $(\beta,\rho)\sim (\alpha,u)$ then there exists $\varphi\in G_1$ and $\sigma:\Gamma\rightarrow H_1$ such that the following two equations hold
    \begin{align*}
     \beta_g &=\partial_1(\sigma_g)\varphi\alpha_g\varphi^{-1}, \\
     \rho_{g,h} &=\sigma_g\varphi\alpha_g\varphi^{-1}(\sigma_h)\varphi(u_{g,h})\sigma_{gh}^{-1}
    \end{align*}
     Let $s:\Gamma\rightarrow H_0$ be such that $\phi_0(s_g)=\sigma_g$ and $t_{g,h}\in H_0$ such that $\phi_0(t_{g,h})$. Then $r_{g,h}=s_g\varphi\alpha_g\varphi^{-1}(s_h)\varphi(t_{g,h})s_{gh}^{-1}$ is a lift of $\rho_{g,h}$ under $\phi_0$. We compute that
     \begin{align*}
          &\beta_g(r_{h,k})r_{g,hk}r_{gh,k}^{-1}r_{g,h}^{-1}\\
         =\ &s_g\varphi\alpha_g\varphi^{-1}(s_h)\varphi\alpha_g\alpha_h\varphi^{-1}(s_k)\varphi\alpha_g(t_{h,k}) \varphi\alpha_g\varphi^{-1}(s_{hk}^{-1})s_g^{-1}r_{g,hk}r_{gh,k}^{-1}r_{g,h}^{-1}\\
         =\ &s_g\varphi\alpha_g\varphi^{-1}(s_h)\varphi\alpha_g\alpha_h\varphi^{-1}(s_k)\varphi\alpha_g(t_{h,k}) \varphi(t_{g,hk})s_{ghk}^{-1}r_{gh,k}^{-1}r_{g,h}^{-1} \\
         =\ &s_g\varphi\alpha_g\varphi^{-1}(s_h)\varphi\alpha_g\alpha_h\varphi^{-1}(s_k)\varphi\alpha_g(t_{h,k}) \varphi(t_{g,hk}t_{gh,k}^{-1})\varphi\alpha_{gh}\varphi^{-1}(s_k^{-1})\varphi(t_{g,h}^{-1})\\
         &\varphi\alpha_g\varphi^{-1}(s_h^{-1})s_g^{-1}\\
         =\ &\partial_0(s_g\varphi\alpha_g\varphi^{-1}(s_h)\varphi\alpha_g\alpha_h\varphi^{-1})\varphi(\omega_{g,h,k}) = \varphi(\omega_{g,h,k})
     \end{align*}
     where in the final step we have used that $\varphi(\omega_{g,h,k})$ is still $H_{-1}$ which is contained in the centre of $H_0$.
     \par Hence \begin{align*}
     \theta(\beta,\rho)&=\iota_{(\beta,\rho)}(\beta_g(r_{h,k})r_{g,hk}r_{gh,k}^{-1}r_{g,h}^{-1})\\
     &=\iota_{(\varphi\alpha\varphi^{-1},\varphi(u))}(\beta_g(r_{h,k})r_{g,hk}r_{gh,k}^{-1}r_{g,h}^{-1})\\
     &=\iota_{(\varphi\alpha\varphi^{-1},\varphi(u))}(\varphi(\omega_{g,h,k}))\\
     &=\varphi\cdot \iota_{(\alpha,u)}(\omega_{g,h,k})\\
     &=\theta(\alpha,u).
     \end{align*}
     To show the exactness at $H^1(\Gamma,\cG_1)$ take $(\alpha,u)\in H^1(\Gamma,\cG_1)$ such that $\theta(\alpha,u)=0$. As $\iota_{(\alpha,u)}$ is injective there exists $\eta:\Gamma\times \Gamma\rightarrow H_{-1}^{(\alpha,u)}$ such that using the notation in Equation \ref{eqn:theta}
     \[
     \alpha_g(v_{h,k})v_{g,hk}v_{gh,k}^{-1}v_{g,h}^{-1}=\alpha_g(\eta_{h,k})\eta_{g,hk}^{-1}\eta_{g,hk}\eta_{g,h}
     \]
     As $\eta$ is valued in $H_{-1}$ the elements $s_{g,h}=v_{g,h}\eta_{g,h}^{-1}$ lift the unitaries $u_{g,h}$ through $\phi_0$. But by construction $(\alpha,s)\in Z^{1}(\Gamma,\cG_0)$ and maps onto $(\alpha,u)$ under the map $H^1(\Gamma,\cG_0)\rightarrow H^1(\Gamma,\cG_1)$. The naturality of the exact sequence follows from a straightfowards computation.
\end{proof}
     We now apply the above lemma to our examples of operator algebraic crossed modules.
     \begin{theorem}\label{thm:operatoralgcrossedmoduleseq}
         Let $A$ be a unital C$^*$-algebra, and let $Z_A=Z(U(A))$. Then there exist exact sequence of pointed sets
     \[
		\begin{tikzcd}
			H^2(\Gamma, Z_A) \ar[r] & H^1(\Gamma, \cG_A) \ar[r] & H^1(\Gamma,P\cG_A) \ar[r,"\ob"] & \underset{\sim P\cG_A}{\bigoplus}H^3(\Gamma, Z_A),
		\end{tikzcd}
	\]
    \[
		\begin{tikzcd}[column sep=4ex]
			H^2(\Gamma, K_0^\#(A)) \ar[r] & H^1(\Gamma, \wcG_A) \ar[r] & H^1(\Gamma, P\cG_A) \ar[r,"\tilde{\ob}"] & \underset{\sim P\cG_A}{\bigoplus} H^3(\Gamma, K_0^\#(A)),
		\end{tikzcd}
	\]
    \[
    \begin{tikzcd}[column sep=4ex]
			H^2(\Gamma, \pi_1(U(A))) \ar[r] & H^1(\Gamma, \wcG_A) \ar[r] & H^1(\Gamma, \cG_A) \ar[r,"\kappa^3"] & \underset{\sim \cG_A}{\bigoplus} H^3(\Gamma, \pi_1(U(A)))
		\end{tikzcd}
    \]
and if $\tau \in T(A)$ then also
\[
		\begin{tikzcd}
			H^2(\Gamma, Z_A^\tau) \ar[r] & H^1(\Gamma, S\cG_A^{\tau}) \ar[r] & H^1(\Gamma, P\cG_A^{\tau}) \ar[r,"\ob_\tau"] & \underset{\sim P\cG_A^{\tau}}{\bigoplus} H^3(\Gamma,Z_A^\tau)
		\end{tikzcd}
	\]
denoting $ZSU_\tau(A)$ by. These sequences are natural in $\Gamma$ and in the coefficient crossed modules.
     \end{theorem}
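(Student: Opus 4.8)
The plan is to produce each of the four sequences as an instance of \cref{lem:longexact}, and then to identify the boundary map $\theta$ of \eqref{eqn:theta} with the named obstruction by inspecting its defining cocycle. Recall that \cref{lem:longexact} applies to any morphism $(\phi_0,\id)\colon\cG_0\to\cG_1$ of crossed modules sharing the same $G$-component, with $\phi_0$ surjective and abelian central kernel $H_{-1}=\ker\phi_0$ sitting in a short exact sequence of topological groups
\[
(H_{-1}\to 1)\to\cG_0\to\cG_1 .
\]
I would therefore first exhibit the four relevant such sequences. For the first, the quotient $U(A)\to PU(A)$ has kernel $Z_A=ZU(A)$, giving $(Z_A\to1)\to\cG_A\to P\cG_A$. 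For the second, \cref{lem:ses-covU} already produces $(K_0^\#(A)\to1)\to\wcG_A\to P\cG_A$ via $\pi=r\circ q$. For the third, the central extension $\pi_1(U(A))\to\covU{A}\to U(A)$ noted after \cref{lem:adjoint_action} gives $(\pi_1(U(A))\to1)\to\wcG_A\to\cG_A$ via $q$. For the fourth, \cref{lem:topologyproj} identifies $SU_\tau(A)/ZSU_\tau(A)$ with $PU(A)=PU_0(A)$ (using the standing assumption $U(A)=U(A)_0$), so the quotient $SU_\tau(A)\to PU(A)$ with kernel $Z_A^\tau=ZSU_\tau(A)$ yields $(Z_A^\tau\to1)\to S\cG_A^\tau\to P\cG_A^\tau$.

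Next I would check that each of these is a short exact sequence of topological groups in our sense, so that \cref{lem:longexact} indeed applies. Centrality and abelianness of the kernels is clear for $Z_A$ and $Z_A^\tau$ (they are centres), and holds for $\pi_1(U(A))$ and $K_0^\#(A)$ by \cref{lem:adjoint_action} and \cref{lem:ses-covU}. The Hurewicz-fibration requirement for the first and fourth sequences is exactly what was recorded before \cref{lem:ses-covU} (via \cite{SPA66} and \cref{lem:topologyproj}); for the second it is the principal-bundle statement of \cref{lem:ses-covU}; and for the third it follows since $q\colon\covU{A}\to U(A)$ is a covering map, hence a fibration.

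It then remains to match boundary maps. By construction $\theta$ chooses lifts $v_{g,h}\in H_0$ of $u_{g,h}$ and outputs the class of $\alpha_g(v_{h,k})\,v_{g,hk}\,v_{gh,k}^{-1}\,v_{g,h}^{-1}$. For the first sequence this is verbatim the cocycle of \cref{def:obalpha}, so $\theta=\ob$. For the second and third I would use the path model $\covU{A}=\mathcal{P}_{U(A)}/\Omega_0U(A)$: a lift of $u_{g,h}$ to $\covU{A}$ is precisely a homotopy class of a continuous path $\widetilde{u}_{g,h}$ from $1$ to a unitary projecting onto $u_{g,h}$, so $\theta$ reproduces the formula defining $\widetilde{\ob}$ in \cref{def:tildeobalpha} (valued in $K_0^\#(A)$) and, in the cocycle-action case, the formula for $\kappa^3$ (valued in $\pi_1(U(A))$, where \eqref{eqn:u} forces the product into $\ker q$). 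For the fourth, a lift in $SU_\tau(A)\subset\ker\Delta_\tau$ is exactly a lift $u$ with $\Delta_\tau(u)=0$, so $\theta=\ob_\tau$; the identification of the coefficient group $Z_A^\tau=ZSU_\tau(A)$ with the $Z\ker\Delta_\tau$ used to define $\ob_\tau$ requires $SU_\tau(A)=\ker\Delta_\tau\cap U(A)_0$, which holds under the surjectivity hypothesis of \cref{prop: kerdetcoincidewithSU}.

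Naturality in $\Gamma$ and in the coefficient crossed modules is inherited directly from \cref{lem:longexact}. The step I expect to be most delicate is the verification that the tracial sequence is genuinely short exact: because $SU_\tau(A)$ carries the nonstandard $d_\tau$-topology, both the surjectivity of $SU_\tau(A)\to PU(A)$ and the fibration property rest on \cref{lem:topologyproj} rather than on a direct argument. A close second is the bookkeeping needed to see that lifts in $\covU{A}$ are homotopy classes of unitary paths, which is what makes $\theta$ agree with $\widetilde{\ob}$ and $\kappa^3$ on the nose.
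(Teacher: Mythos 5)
Your proposal is correct and takes essentially the same route as the paper: the paper's proof likewise consists of applying \cref{lem:longexact} to the four short exact sequences of crossed modules $(Z_A\to 1)\to\cG_A\to P\cG_A$, $(K_0^\#(A)\to 1)\to\wcG_A\to P\cG_A$, $(\pi_1(U(A))\to 1)\to\wcG_A\to\cG_A$ and $(Z_A^\tau\to 1)\to S\cG_A^\tau\to P\cG_A^\tau$, with short exactness supplied by \cref{lem:ses-covU} and the discussion preceding it (the fibration statements via \cite{SPA66} and \cref{lem:topologyproj}). Your additional verification that the boundary map $\theta$ of \cref{lem:longexact} coincides with $\ob$, $\widetilde{\ob}$, $\kappa^3$ and $\ob_\tau$ makes explicit an identification the paper leaves implicit, and is consistent with the definitions.
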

     \begin{proof}
         This follows by applying Lemma \ref{lem:longexact} to the short exact sequence of crossed modules
         \begin{equation} \label{eqn:short-ex-UA-PUA}
	\begin{tikzcd}
		Z_A \ar[r] \ar[d] & U(A)\ar[d,"\Ad{}"] \ar[r] & PU(A) \ar[d,"\Ad{}"] \\
		1 \ar[r] & \Aut{A} \ar[equal]{r}	 & \Aut{A},
	\end{tikzcd}
    \end{equation}
    \begin{equation} \label{eqn: short-ex-tilUA-PUA}
        \begin{tikzcd}
		K_0^{\#}(A) \ar[r] \ar[d] & \tilde{U}(A)\ar[d,"\widetilde{\Ad}{}"] \ar[r] & PU(A) \ar[d,"\Ad{}"] \\
		1 \ar[r] & \Aut{A} \ar[equal]{r}	 & \Aut{A},
	\end{tikzcd}
\end{equation}
\begin{equation} \label{eqn: short-ex-tilUA-UA}
        \begin{tikzcd}
		\pi_1(U(A)) \ar[r] \ar[d] & \tilde{U}(A)\ar[d,"\widetilde{\Ad}{}"] \ar[r] & U(A) \ar[d,"\Ad{}"] \\
		1 \ar[r] & \Aut{A} \ar[equal]{r}	 & \Aut{A},
	\end{tikzcd}
\end{equation}
\begin{equation}\label{eqn: short-ex-SU-PU}
        \begin{tikzcd}
		Z_A^\tau \ar[r] \ar[d] & SU_\tau(A)\ar[d,"\Ad{}"] \ar[r] & PU(A) \ar[d,"\Ad{}"] \\
		1 \ar[r] & \Aut{A} \ar[equal]{r}	 & \Aut{A}.
	\end{tikzcd}
\end{equation}
That these are short exact sequences of crossed modules follows from Lemma \ref{lem:ses-covU} and the discussion preceeding this lemma.
\end{proof}
When $A$ has trivial centre one can remove the direct sum in the final term of the first exact sequence of Theorem \ref{thm:operatoralgcrossedmoduleseq} (see Remark \ref{rmk:simplificationsobs}). Under this assumption one can remove the direct sum in the final terms of the second and third exact sequence when $A$ is has only approximately inner automorphisms and in the fourth when $A$ is monotracial.
 \section{Classifying spaces} \label{sec:ClassifyingSpaces}
 Each topological crossed module $\partial \colon H \to G$ gives rise to a strict topological $2$-ca\-te\-go\-ry internal to topological spaces, which we will also denote by $\cG$. Its space of objects $\cG_0$ consists of a single point. Its space of $1$-morphisms $\cG_1$ is the topological group $G$ with composition given by the group multiplication in $G$. The space of $2$-morphisms $\cG_2$ is the semidirect product $G \rtimes H$. More precisely, an element $(\alpha, u) \in G \rtimes H$ gives a $2$-morphism $\alpha \Rightarrow \partial(u)\alpha$ and the vertical composition is given by 
\[
	(\partial(u_2)\alpha, u_1) \circ_v (\alpha, u_2) = (\alpha, u_1u_2)\ ,
\]
i.e.\ the multiplication in $H$. The horizontal composition of the $2$-morphisms $(\alpha, u)$ and $(\beta, v)$ is defined to be
\[
	(\alpha, u) \circ_h (\beta, v) = (\alpha \cdot \beta, u\,\alpha(v) )
\]
and uses the action of $G$ on $H$. Using arrows for $1$-morphisms and bold arrows for $2$-morphisms the two compositions are pictured below:
\[
	\begin{tikzcd}[row sep=2cm]
		\ast & & \ar[ll,bend right=80,""{name=U},"\alpha" above] \ar[ll,""{name=M}] \ar[ll,bend left=80,""{name=D},"\partial(u_1u_2)\alpha" below] \ast
		\arrow[Rightarrow, from=U, to=M, end anchor={[shift={(0pt,8pt)}]south},"u_2"] 
		\arrow[Rightarrow, from=M, to=D, end anchor={[shift={(0pt,8pt)}]south},"u_1"] 
	\end{tikzcd}
	\rightsquigarrow
	\begin{tikzcd}[row sep=2cm]
		\ast & & \ar[ll,bend right=80,""{name=U2},"\alpha" above] \ar[ll,bend left=80,""{name=D2},"\partial(u_1u_2)\alpha" below] \ast
		\arrow[Rightarrow, from=U2, to=D2, end anchor={[shift={(0pt,10pt)}]south},"u_1u_2"] 
	\end{tikzcd} 
\]
\[
	\begin{tikzcd}[row sep=2cm]
		\ast & & \ar[ll,bend right=60,""{name=U1},"\alpha" above] \ar[ll,bend left=60,""{name=D1},"\partial(u)\alpha" below] \ast & & \ar[ll,bend right=60,""{name=U2},"\beta" above] \ar[ll,bend left=60,""{name=D2},"\partial(v)\beta" below] \ast
		\arrow[Rightarrow, from=U1, to=D1, end anchor={[shift={(0pt,10pt)}]south},"u"] 
		\arrow[Rightarrow, from=U2, to=D2, end anchor={[shift={(0pt,10pt)}]south},"v"] 
	\end{tikzcd}
	\rightsquigarrow
	\begin{tikzcd}[row sep=2cm]
		\ast & & \ar[ll,bend right=60,""{name=U3},"\alpha \cdot \beta" above] \ar[ll,bend left=60,""{name=D3},"\partial(u\alpha(v))\,\alpha \cdot \beta" below] \ast 
		\arrow[Rightarrow, from=U3, to=D3, end anchor={[shift={(0pt,10pt)}]south},"u\alpha(v)"] 
	\end{tikzcd}	
\]
Strict topological $2$-categories with one object and invertible $1$- and $2$-mor\-phisms are called topological $2$-groups. As explained in \cite{paper:BaezStevenson} the above construction gives a bijective correspondence between topological $2$-groups and topological crossed modules.

Neglecting the topology, any topological $2$-group is an example of a $2$-category in the sense of \cite[Definition 2.1.3 and Definition 2.3.2]{book:JohnsonYau}. We recall the definition of strictly unital pseudofunctors between $2$-categories as stated in \cite[Definition 4.1.2]{book:JohnsonYau}

\begin{definition} \label{def:2-functor}
	Let $\cC$ and $\cC'$ be small $2$-categories as in \cite[Definition 2.3.2]{book:JohnsonYau}. Denote the set of objects by $\cC_0$, $\cC'_0$, respectively, and let $\cC_k$, $\cC'_k$ for $k \in \{1,2\}$ be the sets of $k$-morphisms. A strictly unital pseudofunctor $\theta = (\theta^0, \theta^1, \eta) \colon \cC \to \cC'$ consists of 
	\begin{enumerate}[i)]
		\item a map $\theta^0 \colon \cC_0 \to \cC'_0$,
		\item for each pair of objects $x,y \in \cC_0$ a functor 
		\[
			\theta^1_{x,y} \colon \cC(x,y) \to \cC'(\theta^0(x), \theta^0(y))\ ,
		\]
		with $\theta^1_{x,x}(\id{x}) = \id{\theta^0(x)}$,
		\item For all objects $x,y,z \in \cC_0$ and pair of morphisms $\beta \colon x \to y$, $\alpha \colon y \to z$ a natural isomorphism 
		\[
		  \eta_{\alpha, \beta} \colon \theta^1(\alpha \circ \beta) \Longrightarrow  \theta^1(\alpha) \circ \theta^1(\beta)
		\]
		between the two functors 
		\[
		  \theta^1 \circ c_{\cC},\ c_{\cC'} \circ (\theta^1 \times \theta^1) \colon \cC(y,z) \times \cC(x,y) \to \cC'(\theta^0(x),\theta^0(z)) \ ,
		\]  
		where $c_{\cC}$ and $c_{\cC'}$ denote the composition in $\cC$ and $\cC'$, respectively, and the following associativity diagram commutes
		\[
		\begin{tikzcd}[column sep=2cm]
		  \theta^1(\alpha) \circ \theta^1(\beta) \circ \theta^1(\gamma) & \theta^1(\alpha \circ \beta) \circ \theta^1(\gamma) \arrow[l,Rightarrow,"\eta_{\alpha,\beta} \circ_h \id{}" above] \\
			\theta^1(\alpha) \circ \theta^1(\beta \circ \gamma) \arrow[u,Rightarrow,"\id{} \circ_h \eta_{\beta,\gamma}" left]  & \theta^1(\alpha \circ \beta \circ \gamma) \arrow[l,Rightarrow, "\eta_{\alpha, \beta \circ \gamma}" below] \arrow[u,Rightarrow,"\eta_{\alpha \circ \beta, \gamma}" right]
		\end{tikzcd}\ .
		\]
	\end{enumerate}
	The set of all strictly unital pseudofunctors between $\cC$ and $\cC'$ will be denoted by $\Fun{\cC}{\cC'}$. Let $\theta \in \Fun{\cC}{\cC'}$ and $\sigma \in \Fun{\cC'}{\cC''}$. Then the composition $\sigma \circ \theta$ of the two is defined by
	\begin{align*}
		(\sigma \circ \theta)^0 &= \sigma^0 \circ \theta^0\ ,\\
		(\sigma \circ \theta)^1_{x,y} &= \sigma^1_{\theta^0(x),\theta^0(y)} \circ \theta^1_{x,y} \ . 
	\end{align*}
	If $\mu$ and $\eta$ denote the natural isomorphisms for $\sigma$ and $\theta$, respectively, then  
	\[
		\mu_{\theta^1(\alpha), \theta^1(\beta)} \circ \sigma^1(\eta_{\alpha, \beta})  \colon \sigma^1(\theta^1(\alpha \circ \beta)) \Longrightarrow 
        \sigma^1(\theta^1(\alpha)) \circ \sigma^1(\theta^1(\beta))
	\]	
	defines the natural isomorphism for $\sigma \circ \theta$.
\end{definition}

\par Any morphism of crossed modules $\Phi=(\phi_1,\phi_2):\cG_1=(\partial_1,H_1,G_1)\rightarrow \cG_2=(\partial_2,H_2,G_2)$ as in Definition \ref{defn:morphicrossedmodules} induces a (continuous) strictly unital pseudofunctor from the 2-category associated to $\cG_1$ to the 2-category associated to $\cG_2$. This functor is defined by the identity on the single object, $\phi_2$ at the level of 1-morphisms, at the level of 2-morphisms it is given by
\begin{align*}
    G_1\rtimes H_1&\rightarrow G_2\rtimes H_2\\ 
    (g,h)&\mapsto (\phi_2(g),\phi_1(h))
\end{align*}
   and the natural isomorphisms $\eta$ are chosen to be the identity. There are several ways of associating a classifying space to a topological $2$-group obtained from a crossed module $\cG = (G,H,\partial)$. We will describe two of them in more detail. All of them proceed by first constructing a simplicial space from $\cG$. The geometric realisation of this simplicial space is then the classifying space. 

The first construction goes back to Duskin \cite{paper:Duskin}: Let $[n]$ be the $2$-category associated to the ordered set $\{0, \dots, n\}$, i.e.\ there is a unique $1$-morphism from $j$ and $i$ if and only if $i \leq j$ and the category has only identity $2$-morphisms. Let 
\[
	N_n^D(\cG) = \Fun{[n]}{\cG}\ .
\] 
Since $\cG$ has only one object, a strictly unital pseudofunctor is determined by a choice of $1$-morphism for each pair $i < j$ and a $2$-morphism for $i < j < k$. Therefore  
\[
	\Fun{[n]}{\cG} \subseteq G^{\binom{n+1}{2}} \times H^{\binom{n+1}{3}}
\]
and we give it the subspace topology. Any order-preserving map $f \colon [m] \to [n]$ can be viewed as a functor and gives rise to a continuous map  
\[
	f^* \colon \Fun{[n]}{\cG} \to \Fun{[m]}{\cG}\ .
\]
With this definition $N^D_\ast(\cG)$ is a simplicial space, called the Duskin nerve of~$\cG$. Unpacking the definition, we have 
\begin{align*}
	N^D_0(\cG) &= \ast \ ,\\
	N^D_1(\cG) &= G \ .
\end{align*}
The space of $n$-simplices $N^D_n(\cG)$ consists of $(\alpha_{ij})_{0 \leq i < j \leq n}$ with $\alpha_{ij} \in G$ and $(u_{ijk})_{0 \leq i < j < k \leq n}$ with $u_{ijk} \in H$ such that
\begin{align*}
	\alpha_{ij} \, \alpha_{jk} &= \partial(u_{ijk})\, \alpha_{ik}\ , \\
	\alpha_{ij}(u_{jkl})\,u_{ijl} &= u_{ijk}\,u_{ikl}\ .	
\end{align*}
In particular, the space of $2$-simplices can be visualised as the space of $H$-labelled triangles with $G$-labelled edges, subject to the condition shown on the right in the following diagram: 
\begin{center}
\begin{tabular}{cc}
	\begin{tikzcd}[row sep=1.3cm, column sep=1.3cm]
		\ast & \arrow[d,Rightarrow,yshift=5pt,shorten=2mm,"u_{012}" right] & \ast \ar[ll, "\alpha_{02}" above] \arrow{dl}[below,xshift=8pt]{\alpha_{12}} \\
		& \ast  \arrow{ul}[below,xshift=-8pt]{\alpha_{01}}
	\end{tikzcd}
&
	$\qquad \partial(u_{012}) \, \alpha_{02} = \alpha_{01} \, \alpha_{12}$
\end{tabular}
\end{center}
The space of $3$-simplices consists of tetrahedra with triangular faces as above subject to a compatibility condition that arises from composing the $2$-morphisms on the sides. This is shown in the unfolded diagram below: 
\[
\begin{tikzcd}[column sep=1.5cm, row sep=1cm]
& \ast 
\arrow{rddd}[right,xshift=5pt]{\alpha_{13}} 
\arrow[dashed]{dd}[yshift=-10pt]{\alpha_{23}} 
\arrow{lddd}[left,xshift=-4pt]{\alpha_{03}} \\
& & \\
& \ast 
\arrow[dashed]{rd}[below,xshift=-6pt]{\alpha_{12}} 
\arrow[dashed]{ld}[right,xshift=5pt]{\alpha_{02}} & \\
\ast  & & 
\arrow[ll,"\alpha_{01}" below] \ast   
\end{tikzcd}
\]
\[
\begin{tikzcd}[row sep=1.2cm, column sep=1.1cm]
	& \ast 
	\arrow{dl}[left,xshift=-5pt]{\alpha_{01}} 
	\\
	\ast 
	& \arrow[u,Rightarrow,shorten=2mm,"u_{012}" right] & 
	\ast 
	\ar[ll, "\alpha_{02}" below] 
	\arrow{ul}[right,xshift=5pt]{\alpha_{12}}  
	& \ast 
	\arrow{l}[above]{\alpha_{23}} 
	\arrow[bend left=40]{lll}[below]{\alpha_{03}} 
	\\
	& & 
  	\arrow[Rightarrow,shorten=3mm,xshift=-9mm,yshift=3mm]{u}[right]{u_{023}}
\end{tikzcd}
=
\begin{tikzcd}[row sep=1.2cm, column sep=1.1cm]
	& & \ast 
	\arrow{dl}[left,xshift=-5pt]{\alpha_{12}}
	\\
	\ast 
	&  
	\ast 
	\arrow{l}[above]{\alpha_{01}}
	& 
	\arrow[Rightarrow,shorten=2mm]{u}[right]{u_{123}}
	&
	\ast 
	\arrow{ul}[right,xshift=5pt]{\alpha_{23}}
	\arrow{ll}[below]{\alpha_{13}}
	\arrow[bend left=40]{lll}[below]{\alpha_{03}}
	\\
	& & 
        \arrow[Rightarrow,shorten=3mm,xshift=-24pt,yshift=3mm]{u}[right]{u_{013}}
\end{tikzcd}
\]
The simplicial structure maps arise from letting the order-preserving maps act on the indices: If $f \colon [m] \to [n]$ is an order-preserving map, then 
\[
	f^*\left( (\alpha_{ij}, u_{ijk})_{0 \leq i < j < k \leq n} \right) = (\beta_{rs}, v_{rst})_{0 \leq r < s < t \leq m}
\]
with 
\begin{align*}
	\beta_{rs} &= \begin{cases}
		\alpha_{f(r)f(s)} & \text{if } f(r) < f(s) \ ,\\
		1 & \text{else}
	\end{cases} \\
	v_{rst} &= \begin{cases}
		u_{f(r)f(s)f(t)} & \text{if } f(r) < f(s) < f(t) \ .\\
		1 & \text{else}
	\end{cases}
\end{align*}
\begin{remark}\label{rmk:k-space}
    Strictly speaking $N_{\ast}^D(\cG)$ is only a simplicial space when both $G$ and $H$ are compactly generated as topological spaces as by Definition \ref{def:simp_space}. We will apply the Duskin nerve construction to the operator algebraic crossed modules constructed in Section \ref{sec:crossedmodules} (or also the operator algebraic crossed modules constructed in Lemma \ref{lem:crossedmodulestrict}). All of the groups involved in these crossed modules will be Polish groups in the case of separable $A$, so their compact generation as topological spaces will be immediate in this case.
\end{remark}
\begin{definition} \label{def:Duskin_nerve}
	Let $\cG$ be a topological crossed module. The space $B^D\cG$ obtained as the fat geometric realisation of the Duskin nerve $N^D_\ast(\cG)$, i.e.
	\[
		B^D\cG := \lVert N^D_\ast(\cG) \rVert\ ,
	\]
	is called the \emph{classifying space} of $\cG$. This provides a functor $B^D$ from the category of crossed modules to $\Top$.
\end{definition}

It is worth pointing out a special case of this construction: Let $\Gamma$ be a topological group. It gives rise to a crossed module $\partial \colon 1 \to \Gamma$. The associated $2$-group~$\cC_{\Gamma}$ has trivial $2$-morphisms and $\Gamma$ as its space of $1$-morphisms. We can therefore consider it as a topological $1$-category. Projecting $(\alpha_{ij})_{0 \leq i < j \leq n}$ to $(\alpha_{01}, \alpha_{12}, \dots, \alpha_{n-1,n}) \in \Gamma^n$ provides a homeomorphism 
\[
	N^D_n(\cC_\Gamma) \cong \Gamma^n = N_n(\mathcal{C}_\Gamma)\ .
\]
identifying it with the ordinary ($1$-categorical) nerve of $\mathcal{C}_\Gamma$. It is straightforward to check that this homeomorphism is compatible with the simplicial structure. In particular,
\[
	B^D\cC_{\Gamma} = \lVert N^D_{\ast}(\cC_{\Gamma}) \rVert \cong \lVert N_{\ast}(\cC_{\Gamma}) \rVert = \cB\Gamma\ .
\]

Let $(\alpha,u)$ be a $1$-cocycle on the discrete group $\Gamma$ with values in a crossed module $\cG$. Up to cohomology we may assume that it is normalised, i.e.\ that $\alpha_{e} = 1_G$ and $u_{g,e} = u_{e,g} = 1_H$ for all $g \in \Gamma$. Let $\theta_{(\alpha,u)} \colon \cC_{\Gamma} \to \cG$ be the strictly unital pseudofunctor defined as follows: $\theta^0_{(\alpha,u)}$ is the trivial map on a single point, the functor $\theta^1_{(\alpha,u)}$ maps the $1$-morphism $g \in \Gamma$ to the $1$-morphism $\alpha_g$. Since there are only identity $2$-morphisms in $\cC_{\Gamma}$ this fixes $\theta^1_{(\alpha,u)}$ completely. The normalisation ensures that $\theta^1_{(\alpha,u)}(\id{}) = \alpha_e = 1_G = \id{}$. Let $\eta_{g,h} = u_{g,h}$, then this provides a natural transformation
\[
\begin{tikzcd}
    u_{g,h} \colon \theta^1_{(\alpha,u)}(g \circ h) = \alpha_{gh} \ar[r,Rightarrow] & \partial(u_{g,h}) \,\alpha_{gh} = \alpha_g\,\alpha_h = \theta^1_{(\alpha,u)}(g)\theta^1_{(\alpha,u)}(h)
\end{tikzcd}
\]
Composition with the pseudofunctor $\theta_{(\alpha,u)} \colon \cC_\Gamma \to \cG$ induces a morphism of simplicial spaces
\[
	N_\ast(\cC_{\Gamma}) \to N^D_\ast(\cG)\ .
\]
Hence, after geometric realisation we end up with a continuous map
\[
	\cB\Gamma = \lVert N_\ast(\cC_{\Gamma})\rVert \to \lVert N_\ast^D(\cG) \rVert = B^D\cG\ .
\]
Denote the set of all normalised $1$-cocycles on $\Gamma$ by $Z^1_{\text{nor}}(\Gamma, \cG)$, then the above construction provides a map 
\begin{equation}\label{eqn:maponcocycles}
	Z^1_{\text{nor}}(\Gamma, \cG) \to [B\Gamma, B^D\cG]\ .
\end{equation}
Which is natural in both $\Gamma$ and $\cG$.\footnote{Recall that a morphism of crossed modules $\Phi:\cG\rightarrow \cH$ induces a continuous strictly unital pseudofunctor of the underlying 2-categories. Thus it induces a map $[B\Gamma,B^D\cG]\rightarrow [B\Gamma,B^D\cH]$ by postcomposing a continuous function $\cB\Gamma\rightarrow B^D\cG$ with the continuous map $B^D\cG\rightarrow B^D\cH$ induced by $\Phi$. Naturality in $\cG$ hence follows from functoriality of the geometric realisation.} Note that there is no ambiguity in choosing $B\Gamma$ or $\cB\Gamma$ in \eqref{eqn:maponcocycles} as since $\Gamma$ is discrete both canonically homotopy equivalent. The next lemma shows that this map induces a well-defined map on cohomology classes. 
\begin{lemma} \label{lem:map_on_BG}
    Let $\cG$ be the topological $2$-group associated to a crossed module $\partial \colon H \to G$ and let $\Gamma$ be a discrete group. The above construction gives rise to a map of pointed sets
	\[
		H^1(\Gamma, \cG) \to [B\Gamma, B^D\cG]
	\]
	which is natural both in $\Gamma$ and in $\cG$.
\end{lemma}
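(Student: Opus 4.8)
The plan is to check that the assignment \eqref{eqn:maponcocycles}, defined on normalised $1$-cocycles, is independent of the representative chosen in a cohomology class and preserves the basepoint, and then to verify naturality. First I would reduce to normalised cocycles. Exactly as for ordinary group $2$-cocycles, every class in $H^1(\Gamma,\cG)$ contains a normalised representative: starting from an arbitrary $(\alpha,u)$ one solves the coboundary equations \eqref{eqn:alpha12} and \eqref{eqn:u12} with $\gamma = 1_G$ and a suitable $w \colon \Gamma \to H$ (built from the values $u_{e,e}$, $u_{g,e}$, $u_{e,g}$) so as to arrange $\alpha_e = 1_G$ and $u_{g,e} = u_{e,g} = 1_H$. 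The trivial cocycle is already normalised, and its associated pseudofunctor sends every morphism to an identity, so the induced simplicial map $N_\ast(\cC_\Gamma) \to N^D_\ast(\cG)$ lands in the basepoint; hence the transformation is basepoint-preserving.

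The heart of the argument is to show that two cohomologous normalised cocycles $(\alpha^{(0)},u^{(0)})$ and $(\alpha^{(1)},u^{(1)})$, related by data $(\gamma,w)$ as in \eqref{eqn:alpha12}--\eqref{eqn:u12}, induce homotopic maps $\cB\Gamma \to B^D\cG$. The key observation is that $(\gamma,w)$ is precisely a pseudonatural isomorphism between the pseudofunctors $\theta_{(\alpha^{(0)},u^{(0)})}$ and $\theta_{(\alpha^{(1)},u^{(1)})}$: on the single object it is the $1$-morphism $\gamma \in G$, and on each $g \in \Gamma$ its component is the invertible $2$-morphism $w_g$. The component identity $\alpha^{(1)}_g = \partial(w_g)\,\gamma\,\alpha^{(0)}_g\,\gamma^{-1}$ is exactly \eqref{eqn:alpha12}, and the compatibility of these components with composition is exactly \eqref{eqn:u12}. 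Equivalently, $(\gamma,w)$ assembles into a single strictly unital pseudofunctor $\Theta \colon \cC_\Gamma \times [1] \to \cG$, where $[1]$ is the interval category, whose restrictions to $\cC_\Gamma \times \{0\}$ and $\cC_\Gamma \times \{1\}$ are the two pseudofunctors above; verifying the coherence axioms of \cref{def:2-functor} for $\Theta$ amounts precisely to \eqref{eqn:alpha12}, \eqref{eqn:u12} and the cocycle identities, while strict unitality uses that the cocycles are normalised.

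Applying the Duskin nerve then produces the homotopy. Since $\cC_\Gamma \times [1]$ is an ordinary $1$-category, its Duskin nerve is its ordinary nerve and factors as $N^D_\ast(\cC_\Gamma \times [1]) \cong N_\ast(\cC_\Gamma) \times \Delta^1_\bullet$, where $\Delta^1_\bullet = N_\ast([1])$ is the standard simplicial interval. Post-composition with $\Theta$ thus yields a simplicial map $N_\ast(\cC_\Gamma) \times \Delta^1_\bullet \to N^D_\ast(\cG)$ restricting at the two ends to the simplicial maps of the two cocycles, i.e.\ a simplicial homotopy between them. Applying $\lVert - \rVert$ and noting that the prism decomposition of $\Delta^n \times [0,1]$ glues only along injective (coface) maps --- and is therefore compatible with the relation $\sim_f$ defining the fat realisation --- the usual prism operators produce a genuine homotopy $\cB\Gamma \times [0,1] \to B^D\cG$ from $\lVert \theta_{(\alpha^{(0)},u^{(0)})} \rVert$ to $\lVert \theta_{(\alpha^{(1)},u^{(1)})} \rVert$. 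Since $\cB\Gamma \simeq B\Gamma$, the two cocycles define the same element of $[B\Gamma, B^D\cG]$, so the map descends to $H^1(\Gamma,\cG)$.

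Naturality is then formal: a group homomorphism $\Gamma' \to \Gamma$ induces a functor $\cC_{\Gamma'} \to \cC_\Gamma$, hence a map $B\Gamma' \to B\Gamma$ compatible with the pullback of cocycles, while a morphism of crossed modules $\cG \to \cH$ induces a strictly unital pseudofunctor and so a map $B^D\cG \to B^D\cH$ compatible with the pushforward of cocycles, as already recorded for \eqref{eqn:maponcocycles}; in both cases the square commutes by functoriality of the nerve and of $\lVert - \rVert$. I expect the main obstacle to be the middle step: packaging $(\gamma,w)$ as the pseudofunctor $\Theta$ and matching every piece of the coherence bookkeeping against the cohomology relations, and --- more delicately --- justifying that the \emph{fat} realisation turns this simplicial homotopy into an honest homotopy. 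This is the only point where the distinction between the thin and fat realisations could bite, since $\lVert - \rVert$ preserves products only up to weak equivalence \cite[Theorem 7.2]{EBWI19}; the prism argument above is what circumvents this, as it never invokes a degeneracy.
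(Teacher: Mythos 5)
Your proposal is correct and takes essentially the same route as the paper: the paper's proof likewise packages the coboundary data $(\gamma,w)$ into a strictly unital pseudofunctor $\Theta \colon \cC_\Gamma \times I \to \cG$ (with $I$ exactly your interval category), takes nerves to get a simplicial homotopy, and realises it to an honest homotopy via \cite[Proposition 6.2]{book:MaySimplicial} and \cite[Lemma 1.15]{EBWI19} --- the latter being precisely the fact your prism argument re-derives. The only difference is one of detail, not of strategy: the paper explicitly defines $\Theta^1(g, 1 \leftarrow 0) = \gamma\,\alpha^{(0)}_g$ with compositors $\gamma(u^{(0)}_{g,h})$ and $w_g\,\gamma(u^{(0)}_{g,h})$ and checks the associativity coherence against \eqref{eqn:alpha12}, \eqref{eqn:u12} and \eqref{eqn:u}, which is exactly the bookkeeping you flag as the main remaining obstacle.
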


\begin{proof}
    Let $I$ be the (discrete) category with object set $\{0,1\}$ and one non-identity morphism $0 \to 1$. It suffices to see that a $1$-coboundary $(\gamma, w)$ linking two $1$-cocycles $(\alpha^{(i)}, u^{(i)})$ for $i \in \{0,1\}$ as described in \eqref{eqn:alpha12} and \eqref{eqn:u12} gives rise to a strictly unital pseudofunctor 
	\[
		\Theta \colon \cC_{\Gamma} \times I \to \cG
	\]
	such that the restriction of $\Theta$ to the subcategories $\cC_\Gamma \times \{i\}$ for $i \in \{0,1\}$ coincides with $\theta_i := \theta_{(\alpha^{(i)}, u^{(i)})}$. Indeed, by \cite[Proposition 6.2]{book:MaySimplicial} the simplicial map $N_\ast \Theta$ induces a simplicial homotopy between $N_\ast \theta_0$ and $N_\ast \theta_1$ in the sense of \cite[Definition 5.1]{book:MaySimplicial}. By \cite[Lemma 1.15]{EBWI19} a simplicial homotopy induces a homotopy 
    \[
        H \colon B\Gamma \times [0,1] \to B^D\cG
    \]
    between $\lVert N_\ast \theta_0 \rVert$ and $\lVert N_\ast \theta_1 \rVert$.
	
	The functor $\Theta$ is already fixed on the two subcategories $\cC_\Gamma \times \{i\}$ (in particular on all objects). Therefore we only need to define where $\Theta^1$ should map the morphisms $(g, 1 \leftarrow 0) \in \cC_\Gamma \times I$ and define the natural transformations
	\begin{align}
		\Theta^1(gh, 1 \leftarrow 0) &\Rightarrow \Theta^1(g, 1 \leftarrow 0) \circ \Theta^1(h, \id{0})  \ , \label{eqn:comp10-0}\\
		\Theta^1(gh, 1 \leftarrow 0) &\Rightarrow  \Theta^1(g, \id{1}) \circ \Theta^1(h, 1 \leftarrow 0) \label{eqn:comp1-10}
	\end{align}
	in such a way that associativity holds. Let 	$\Theta^1(g, 1 \leftarrow 0) = \gamma\,\alpha_g^{(0)}$. Note that   
	\begin{align*}
		\Theta^1(g, 1 \leftarrow 0) \circ \Theta^1(h, \id{0}) &= \gamma\,\alpha_g^{(0)}\,\alpha_h^{(0)} = \partial(\gamma(u^{(0)}_{g,h}))\,\gamma\,\alpha^{(0)}_{gh} \ ,\\ 
		\Theta^1(gh, 1 \leftarrow 0) &= \gamma\,\alpha_{gh}^{(0)}\ .
	\end{align*}
	Therefore we define \eqref{eqn:comp10-0} to be $\gamma(u_{g,h}^{(0)})$. In a similar spirit we have 
	\begin{align*}
		\Theta^1(g, \id{1}) \circ \Theta^1(h, 1 \leftarrow 0) &= \alpha_g^{(1)}\,\gamma\,\alpha^{(0)}_h = \partial(w_g)\,\gamma\,\alpha^{(0)}_g\,\alpha^{(0)}_h \\ 
		&= \partial(w_g\,\gamma(u^{(0)}_{g,h}))\,\gamma\,\alpha^{(0)}_{gh} \ .
	\end{align*}
	Hence, we define \eqref{eqn:comp1-10} to be $w_g\,\gamma(u_{g,h}^{(0)})$. Associativity needs to be checked for the following compositions:
	\begin{align}
		\Theta^1(g, \id{1}) \circ \Theta^1(h, 1 \leftarrow 0) \circ \Theta^1(k, \id{0}) \label{eqn:as-1-10-0} \\
		\Theta^1(g, 1 \leftarrow 0) \circ \Theta^1(h, \id{0}) \circ \Theta^1(k, \id{0}) \label{eqn:as-10-0-0}\\
		\Theta^1(g, \id{1}) \circ \Theta^1(h, \id{1}) \circ \Theta^1(k, 1 \leftarrow 0)
			\label{eqn:as-1-1-10} 
 	\end{align}
 	Associativity in \eqref{eqn:as-1-10-0} and \eqref{eqn:as-10-0-0} easily follows from \eqref{eqn:u} using \eqref{eqn:alpha12} where needed. Checking associativity in \eqref{eqn:as-1-1-10} leads to the equation
 	\begin{equation} \label{eqn:as-comparison}
            \alpha_g^{(1)}\!\left(w_h\gamma(u_{h,k}^{(0)})\right) w_g \gamma(u_{g,hk}^{(0)}) = u_{g,h}^{(1)}w_{gh} \gamma(u_{gh,k}^{(0)})
        \end{equation}
    Replacing $\alpha_g^{(1)}$ using \eqref{eqn:alpha12} gives
    \[
        w_g\gamma(\alpha_g^{(0)}(\gamma^{-1}(w_h)))\,\gamma(\alpha_g^{(0)}(u_{h,k}^{(0)}) u_{g,hk}^{(0)}) = w_g\gamma(\alpha_g^{(0)}(\gamma^{-1}(w_h))u_{g,h}^{(0)})\,\gamma(u_{gh,k}^{(0)})
    \]
    where we applied \eqref{eqn:u}. By \eqref{eqn:u12} this is the same as the right hand side of \eqref{eqn:as-comparison}. Therefore associativity holds in all cases, which finishes the proof. 
\end{proof}

\begin{remark}
	What we really achieved in the proof of Lemma \ref{lem:map_on_BG} is to show that two cohomologous cocycles $(\alpha^{(i)}, u^{(i)})$ give rise to naturally isomorphic pseudofunctors $\theta_0$ and $\theta_1$ in an appropriate $2$-categorical sense. This natural transformation then gives rise to the corresponding homotopy. We have just combined these two steps into a single one.
\end{remark}
The key point to constructing \eqref{eqn:maponcocycles} is that any $1$-cocycle $(\alpha,u):\cC_{\Gamma}\rightarrow \cG$ induces a strictly unital pseudofunctor $\theta_{(\alpha,u)}:\cC_{\Gamma}\rightarrow \cG$. The functor $\theta^{1}_{(\alpha,u)}$ is always faithful, and it is full if and only if $\alpha_g$ is not in the image of $\partial$ for all $g\neq 1$. This motivates the following variation of the $1$-cohomology sets with coefficients in a crossed module.
\begin{definition}\label{def:ffcohomology}
Let $\cG = (H,G, \partial)$ be a topological crossed module. We let $Z^1_{ff}(\Gamma,\cG)$ be the set of $(\alpha,u)\in Z^1(\Gamma,\cG)$ such that $\alpha_g$ is not in the image of $\partial$ for all $g\neq 1$. We let
\[
H^1_{ff}(\Gamma,\cG_A)= Z^1_{ff}(\Gamma,\cG)/\!\sim.
\]
\end{definition}
\begin{remark}\label{rmk: maptoBGff}
The inclusion $Z^1_{ff}(\Gamma,\cG_A)\rightarrow Z^1(\Gamma,\cG_A)$ passes to an injective, natural map of sets
\[
H^1_{ff}(\Gamma,\cG_A)\rightarrow H^1(\Gamma,\cG_A).
\]
Thus after composing with the map of Lemma \ref{lem:map_on_BG} we get a canonical map of sets
\[H^1_{ff}(\Gamma,\cG)\rightarrow [B\Gamma,B^D\cG]\]
which is natural in both $\Gamma$ and $\cG$.
\end{remark}

There is a second way of defining the classifying space of a topological crossed module $\partial \colon H \to G$. Let $\cG_\otimes$ be the topological $1$-category obtained as the endomorphism category of the one object $\ast$ in the $2$-category $\cG$. Thus, $\cG_\otimes$ has object space $G$ and morphism space $G \rtimes H$. The composition of morphisms is given by $\circ_v$. Note that the composition in $G$ induces a strict monoidal structure on $\cG_\otimes$ by
\[
	\alpha \otimes \beta = \alpha \cdot \beta
\]
on objects and $(\alpha, u) \otimes (\beta,v) = (\alpha, u) \circ_h (\beta, v)$ on morphisms. This monoidal structure equips the nerve $N_\ast \cG_\otimes$ with the structure of a simplicial topological group. Hence, its geometric realisation is a topological monoid, in fact a group \cite[Corollary 11.7]{book:MayIterated}. 
\begin{definition} \label{def:monoidal_BG}
    Let $\cG$ be a topological crossed module. The space $B^\otimes \cG$ defined by 
    \[
		B^\otimes \cG = \lVert N_\ast\,\lvert N_\ast \cG_{\otimes} \rvert\,\rVert = \cB\,\lvert N_\ast \cG_{\otimes} \rvert
    \]
    is called the \emph{monoidal classifying space} of $\cG$. 
\end{definition}
Moreover, as any morphism of crossed modules gives a continuous monoidal functor of underlying $1$-categories the construction above yields a functor from the category of crossed modules to $\Top$. We defer the proof of the following theorem to the appendix.
\begin{theorem}\label{thm:homotpyeqclassifyingspaces}
    Let $\cG = (\partial \colon H \to G)$ be a topological crossed module with $H$ well-pointed. Then the classifying spaces $B^{D}\cG$ and $B^{\otimes}\cG$ are weakly homotopy equivalent.
\end{theorem}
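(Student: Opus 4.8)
The plan is to construct a zig-zag of simplicial maps connecting the Duskin nerve $N_\ast^D(\cG)$ and the bar construction $N_\ast\,\lvert N_\ast\cG_\otimes\rvert$, and then to check that each intermediate map is a levelwise weak homotopy equivalence, so that after applying the fat geometric realisation functor it descends to a weak equivalence of classifying spaces by the standard fact quoted in the preliminaries (namely that if $f_n$ is a weak equivalence for all $n$, then $\lVert f_\ast\rVert$ is a weak equivalence). The comparison in the discrete case was carried out by Bullejos and Cegarra in \cite{BUCE03}, so the strategy is to follow their argument and verify that it goes through when all the sets involved carry topologies. The heart of the matter is that both constructions assemble the same data---$1$-morphisms in $G$ indexed by edges and $2$-morphisms in $H$ indexed by $2$-faces, subject to the two cocycle identities \eqref{eqn:alpha} and \eqref{eqn:u}---but organise them differently: the Duskin nerve records a single simplicial direction, whereas $N_\ast\,\lvert N_\ast\cG_\otimes\rvert$ is a bisimplicial object whose realisation must be compared with it.

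The key steps, in order, would be as follows. First I would set up the bisimplicial space $W_{\ast,\ast}$ with $W_{p,q} = N_p(\cG_\otimes)^{\times q}$ (or the appropriate bar-type double complex) whose diagonal realises $B^\otimes\cG$, and identify its rows and columns explicitly in terms of $G$ and $H$. Second, I would produce the comparison map to the Duskin nerve by sending a tuple of composable morphisms in $\cG_\otimes$ to the corresponding system $(\alpha_{ij}, u_{ijk})$, using vertical composition $\circ_v$ to fill in the higher faces; this is where the normalisation and the strictly unital structure of the pseudofunctors matters. Third, I would invoke the bisimplicial realisation lemma: realising in one simplicial direction first, one finds that the resulting simplicial space is levelwise weakly equivalent to $N_\ast^D(\cG)$, because collapsing the monoidal direction contracts the redundant $G$-coordinates onto the constraints imposed by the cocycle relations. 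The well-pointedness hypothesis on $H$ enters precisely here: it guarantees that the relevant inclusions are cofibrations, so that the simplicial spaces in play are good (equivalently proper), and hence that fat geometric realisation commutes with the levelwise equivalences and that the bisimplicial diagonal and iterated realisations agree up to weak equivalence.

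The main obstacle I expect is the point-set topology rather than the combinatorics. In the discrete setting of \cite{BUCE03} one freely uses that geometric realisation of bisimplicial sets can be computed by realising in either variable and that levelwise bijections of nerves give homeomorphisms; in the topological setting each such step must be upgraded to a statement about weak homotopy equivalences of spaces in $\Top$, and for these one needs the simplicial spaces to satisfy the goodness/properness conditions of \cref{def:good_proper}. Verifying that the degeneracy maps of $N_\ast^D(\cG)$ and of the bar construction are closed cofibrations---which is exactly what the well-pointedness of $H$ (and the fact that $G$ is a well-pointed topological group, which follows since the crossed module structure forces $1\in G$ to be a nondegenerate basepoint once $H$ is well-pointed) is there to supply---is the technical crux, after which the fact that $\lVert\,\cdot\,\rVert$ sends levelwise weak equivalences to weak equivalences does the rest. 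I would therefore devote most of the appendix proof to setting up the bisimplicial comparison carefully and checking these cofibrancy conditions, and treat the transfer of the Bullejos--Cegarra combinatorial identities as routine once the topology is under control.
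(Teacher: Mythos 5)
Your high-level strategy---generalise \cite{BUCE03} via bisimplicial comparisons that are levelwise weak equivalences and then apply fat realisation---matches the paper's, but the step you treat as the heart of the argument is false as stated, and the two lemmas that actually carry the proof are missing. You claim that after realising the bar-type bisimplicial space $W_{p,q}=N_p(\cG_\otimes)^{\times q}$ in one simplicial direction, the result is levelwise weakly equivalent to $N^D_\ast(\cG)$. It is not, in either order. Realising the nerve direction first gives, in level $q=1$, the space $\lVert N_\ast\cG_\otimes\rVert$; since $\cG_\otimes$ is the action groupoid of $H$ acting on $G$ by $u\cdot\alpha=\partial(u)\alpha$, this is a model for the homotopy quotient of $G$ by $H$, whereas $N^D_1(\cG)=G$. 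For $\cG=(H\to 1)$ with $H$ abelian and non-contractible this is $\cB H$ versus a point. Realising the bar direction first gives $p\mapsto \cB(N_p\cG_\otimes)$, which already fails at $p=0$ ($\cB G$ versus a point). This failure is exactly why the paper (following \cite{BUCE03}) introduces a third object: the bisimplicial space $X_{m,n}=N_m\lxF{[n]}{\cG}$ built from the categories whose objects are \emph{pseudo}functors $[n]\to\cG$ and whose morphisms are natural transformations. Both $N^D_\ast(\cG)$ (included as a bisimplicial space constant in the $m$-direction) and the strict-functor bisimplicial space $N_m\stF{[n]}{\cG}$ map into it, and each inclusion needs its own non-formal argument: \cref{lem:BtensorBF} produces, for each fixed $n$, a retraction $\Psi$ of the inclusion $\stF{[n]}{\cG}\to\lxF{[n]}{\cG}$ together with a natural transformation $\id{}\Rightarrow\Phi\circ\Psi$ (crucially, $\Psi$ is \emph{not} compatible with the simplicial structure in $n$, so one only gets levelwise equivalences---which is all fat realisation needs); and \cref{lem:BDBF} shows, for each fixed $m$, that $\Fun{[\ast]}{\cG}\to N_m\lxF{[\ast]}{\cG}$ realises to a weak equivalence by exhibiting the horizontal degeneracy $s_0^h$ as a simplicial deformation retract through an explicit simplicial homotopy. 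These two constructions are the actual content of the proof; the ``collapsing the redundant $G$-coordinates'' step you gesture at is precisely where no direct argument exists.

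You also misplace the role of the hypothesis that $H$ is well-pointed, and you invert which part of the proof is hard. Fat geometric realisation preserves levelwise weak equivalences and satisfies the interchange of iterated realisations with no cofibrancy assumptions whatsoever (\cite[Theorem 2.2]{EBWI19}, \cite[Equation (1.9)]{EBWI19}, as quoted in the preliminaries), so goodness is not needed for the steps you assign it to. It is needed at exactly one point: $B^\otimes\cG$ is by definition the classifying space of the topological monoid $\lvert N_\ast\cG_\otimes\rvert$, where the \emph{thin} realisation is forced because only it preserves products; well-pointedness of $H$ makes $k\mapsto N_k\stF{[n]}{\cG}$ good (its degeneracies are induced by the identity section $G\to G\rtimes H$, which is a closed cofibration exactly when $\{1\}\to H$ is), so that the comparison $\lVert N_\ast\stF{[n]}{\cG}\rVert\to\lvert N_\ast\stF{[n]}{\cG}\rvert$ of \eqref{eqn:BfG} is a weak equivalence. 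Finally, your parenthetical claim that well-pointedness of $H$ forces $G$ to be well-pointed is false (take $H=1$ and $G$ any topological group with degenerate basepoint); fortunately no well-pointedness of $G$ is ever needed, since the Duskin nerve is only ever fatly realised. The point-set topology is thus the mild part of the proof; the simplicial homotopy combinatorics you propose to ``treat as routine'' are where all the work lies.
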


 \subsection{The homotopy type of $B^{\otimes}\cG_A$} \label{sec:BGA}
\par We denote by $A^s:=A\otimes \bK$ the stabilisation of $A$. Let $e_{ij}$ for $i,j\in \bN$ be the matrix units in $\bK = \bK(\ell^2(\N))$, i.e. they satisfy $e_{ij}e_{lk}=\delta_{j,l}e_{ik}$ and $e_{ij}^*=e_{ji}$. We set $e=e_{11}$ and denote by 
 \[
 \Aute \As=\{\alpha\in \Aut \As : \alpha(1\otimes e)=1\otimes e\}
 \]
 and 
 \[
 \Autz {\As}=\{ \alpha\in \Aut{\As}: \alpha(1\otimes e)\sim_{h} 1\otimes e\}.
 \]
Note that the connected component of the identity in $\Aut{A^s}$ is contained in $\Autz{A^s}$ and that when $A$ is strongly self-absorbing the two coincide (see \cite[Theorem 2.5]{DAPE16}).
\par In this subsection we show that for any discrete group $\Gamma$ there is a natural bijection of pointed sets between $[B\Gamma,B^{D}\cG_A]$ and $[B\Gamma,\cB\Autz{A^s}]$. We first require some preliminary results. The following Lemma is a routine computation.
 \begin{lemma} \label{lem:alpha_on_cpts}
 Let $\alpha \in \Aute{\As}$. The sum 
\begin{equation} \label{eqn:v-alpha}
	v_{\alpha} = \sum_{i \in \N} \alpha(1 \otimes e_{i1})\,(1 \otimes e_{1i})
\end{equation}
    is bounded and converges in the strict topology on $M(A^s)$. It defines a unitary element $v_{\alpha} \in UM(A^s)$. Moreover, for all $T \in \bK$ we have 
	\[
		v_{\alpha} (1 \otimes T) v_{\alpha}^* = \alpha(1 \otimes T)\ .
	\]
\end{lemma}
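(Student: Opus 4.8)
The plan is to set $w_i = \alpha(1 \otimes e_{i1})$ for $i \in \bN$ and to analyse the partial sums $v_N = \sum_{i=1}^N w_i\,(1 \otimes e_{1i})$. Since $e_{i1}^* e_{i1} = e_{11} = e$ and $\alpha$ fixes $1 \otimes e$, each $w_i$ is a partial isometry with $w_i^* w_i = 1 \otimes e$ and $w_i w_i^* = \alpha(1 \otimes e_{ii})$. Using the matrix unit relations together with $w_i^* w_j = \alpha(1 \otimes e_{1i} e_{j1}) = \delta_{ij}(1 \otimes e)$, a direct computation gives
\[
	v_N^* v_N = 1 \otimes P_N, \qquad v_N v_N^* = \alpha(1 \otimes P_N),
\]
where $P_N = \sum_{i=1}^N e_{ii}$. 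In particular $\|v_N\| = 1$ for all $N$, so the partial sums are uniformly bounded.

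First I would establish strict convergence. For a fixed elementary tensor $b \otimes e_{kl} \in \As$ one checks $(v_N - v_M)(b \otimes e_{kl}) = 0$ once $M \ge k$, because $(1 \otimes e_{1i})(b \otimes e_{kl}) = \delta_{ik}(b \otimes e_{1l})$ selects a single index; hence $v_N (b \otimes e_{kl})$ is eventually constant, and likewise for $(b \otimes e_{kl})\,v_N$. Combining this with the uniform bound $\|v_N\| = 1$ and an $\varepsilon/3$ argument over the dense span of elementary tensors shows that $(v_N a)$ and $(a v_N)$ are norm-Cauchy for every $a \in \As$. Thus $(v_N)$ is a bounded, strictly Cauchy net, which converges strictly to an element $v_\alpha \in M(\As)$, proving the first assertion.

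For unitarity I would pass to the strict limit in the two identities above. The only subtlety is that the adjoint is not strictly continuous in general; however, for the bounded strictly convergent net $v_N \to v_\alpha$ one has $a v_N^* = (v_N a^*)^* \to (v_\alpha a^*)^* = a v_\alpha^*$ and similarly $v_N^* a \to v_\alpha^* a$, so $v_N^* \to v_\alpha^*$ strictly as well. Joint strict continuity of multiplication on bounded sets then yields $v_N^* v_N \to v_\alpha^* v_\alpha$ and $v_N v_N^* \to v_\alpha v_\alpha^*$. Since $1 \otimes P_N \to 1$ strictly and $\alpha$ extends to a strictly continuous map with $\alpha(1) = 1$, both right-hand sides converge to $1$, giving $v_\alpha^* v_\alpha = v_\alpha v_\alpha^* = 1$, i.e.\ $v_\alpha \in UM(\As)$.

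Finally, for the conjugation formula it suffices, by linearity and norm-continuity of the maps $T \mapsto v_\alpha(1 \otimes T)v_\alpha^*$ and $T \mapsto \alpha(1 \otimes T)$, to treat $T = e_{kl}$. Using the eventual-constancy computation again I would obtain $v_\alpha(1 \otimes e_{kl}) = w_k(1 \otimes e_{1l})$ and $(1 \otimes e_{1l})v_\alpha^* = (1 \otimes e_{11})w_l^*$, so that
\[
	v_\alpha(1 \otimes e_{kl})v_\alpha^* = w_k\,(1 \otimes e_{11})\,w_l^* = \alpha(1 \otimes e_{k1})\,\alpha(1 \otimes e_{11})\,\alpha(1 \otimes e_{1l}) = \alpha(1 \otimes e_{kl}),
\]
where the middle equality uses $1 \otimes e_{11} = \alpha(1 \otimes e_{11})$ and the last uses $e_{k1}e_{11}e_{1l} = e_{kl}$. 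The main obstacle is purely bookkeeping in the strict topology --- in particular the adjoint and multiplication continuity step for bounded strictly convergent nets --- rather than anything conceptual; the algebraic identities themselves are forced by the matrix unit relations.
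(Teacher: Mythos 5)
Your overall strategy --- the partial sums $v_N = \sum_{i=1}^N w_i(1\otimes e_{1i})$, the identities $v_N^*v_N = 1\otimes P_N$ and $v_Nv_N^* = \alpha(1\otimes P_N)$, strict completeness, and the reduction of the conjugation formula to matrix units --- is sound, and since the paper dismisses this lemma as ``a routine computation'' there is no written proof to compare against. However, one step of your argument fails as stated: the claim that $(b\otimes e_{kl})\,v_N$ is ``likewise'' eventually constant. The index-selection mechanism only works on the right, where $(1\otimes e_{1i})(b\otimes e_{kl}) = \delta_{ik}(b\otimes e_{1l})$ kills all but one term. On the left the relevant products are $(b\otimes e_{kl})\,\alpha(1\otimes e_{i1})$, and since $\alpha(1\otimes e_{i1})$ is \emph{not} a matrix unit, these need not vanish for any $i$. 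Concretely, take $A = \C$, $\As = \bK$, and $\alpha = \Ad_u$ with $u = 1 - 2P_\eta$, where $P_\eta$ is the rank-one projection onto a unit vector $\eta$ orthogonal to $\xi_1$ whose coordinates $\eta_j$ are nonzero for all $j \geq 2$. Then $u$ is a unitary commuting with $e_{11}$, so $\alpha \in \Aute{\As}$, and one computes $v_N = uP_N$; but $e_{22}(v_N - v_M) = e_{22}u(P_N - P_M)$ has norm $\bigl(\sum_{M < i \leq N} |u_{2i}|^2\bigr)^{1/2} > 0$ for all $N > M \geq 2$, because the second row of $u$ has infinitely many nonzero entries. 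So $e_{22}v_N$ converges but is never eventually constant, and left strict Cauchyness genuinely requires an argument --- indeed it is exactly the point where surjectivity of $\alpha$ enters.

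The repair is short and uses an ingredient you already invoke later for unitarity (strict continuity of the extension of $\alpha$ to $M(\As)$, equivalently $\alpha(1\otimes P_N) \to 1$ strictly). For $N > M$ one has $(v_N - v_M)(v_N - v_M)^* = \alpha(1\otimes(P_N - P_M))$, a projection, so the $C^*$-identity and the fact that $\alpha$ is an isometric isomorphism give
\[
	\lVert a(v_N - v_M) \rVert \;=\; \lVert a\,\alpha(1\otimes(P_N - P_M)) \rVert \;=\; \lVert \alpha^{-1}(a)\,(1\otimes(P_N - P_M)) \rVert \;\leq\; \lVert \alpha^{-1}(a)\,(1 - 1\otimes P_M) \rVert \;\longrightarrow\; 0\ ,
\]
since $(1\otimes P_M)_M$ is an approximate unit for $\As$. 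With this substituted for the faulty ``likewise,'' the rest of your proof goes through: the unitarity argument (joint strict continuity of multiplication on bounded sets, self-adjointness of the strict topology) is correct, and your computation of $v_\alpha(1\otimes e_{kl})v_\alpha^*$ only uses the valid right-sided eventual constancy.
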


Let $\alpha \in \Aute {\As}$. Since $\alpha(a \otimes e)$ commutes with $1 \otimes e$, it maps the algebra $(1 \otimes e)(\As)(1 \otimes e)$ isomorphically onto itself. The corner inclusion 
\[
	A \to (1 \otimes e)(\As)(1 \otimes e) \quad , \quad a \mapsto a \otimes e 
\]
is an isomorphism. When restricted along this isomorphism, $\alpha$ gives rise to an element $\bar{\alpha} \in \Aut{A}$. The above lemma implies the following: 
\begin{corollary} \label{cor:conjugation}
	Let $\alpha \in \Aute {\As}$ and let $v_{\alpha} \in UM(A^s)$ be defined as in \eqref{eqn:v-alpha}. Then 
	\[
		\alpha = \Ad_{v_{\alpha}} \circ\, (\bar{\alpha} \otimes \id{\bK})
	\]
\end{corollary}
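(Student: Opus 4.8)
The plan is to prove the identity by evaluating both sides on a dense set of generators and invoking the conjugation formula of \cref{lem:alpha_on_cpts}. Both $\alpha$ and $\Ad_{v_\alpha}\circ(\bar\alpha\otimes\id{\bK})$ are $\ast$-automorphisms of $\As=A\otimes\bK$, hence isometric and in particular norm-continuous. It therefore suffices to show that they agree on the elementary tensors $1\otimes e_{ij}$ and $a\otimes e$ for $a\in A$ and $i,j\in\N$, since the $\ast$-subalgebra generated by these elements is dense in $\As$.

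First I would dispose of the matrix units. As $\bar\alpha\otimes\id{\bK}$ fixes each $1\otimes e_{ij}$, evaluating the right-hand side on $1\otimes e_{ij}$ yields $v_\alpha(1\otimes e_{ij})v_\alpha^*$, which is precisely $\alpha(1\otimes e_{ij})$ by \cref{lem:alpha_on_cpts} applied to $T=e_{ij}$. Thus the two automorphisms agree on all of $1\otimes\bK$.

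The substantive case is the corner element $a\otimes e=a\otimes e_{11}$. Here $(\bar\alpha\otimes\id{\bK})(a\otimes e)=\bar\alpha(a)\otimes e$, and by the very definition of $\bar\alpha$ one has $\bar\alpha(a)\otimes e=\alpha(a\otimes e)$; so the left-hand side already equals $\bar\alpha(a)\otimes e$, and it remains to check that $\Ad_{v_\alpha}$ fixes $\bar\alpha(a)\otimes e$. I would verify this directly from the defining series \eqref{eqn:v-alpha}: multiplying $v_\alpha=\sum_{i}\alpha(1\otimes e_{i1})(1\otimes e_{1i})$ against $\bar\alpha(a)\otimes e_{11}$, the relation $(1\otimes e_{1i})(\bar\alpha(a)\otimes e_{11})=\delta_{i1}\,\bar\alpha(a)\otimes e_{11}$ annihilates every term but $i=1$, and since $\alpha(1\otimes e_{11})=\alpha(1\otimes e)=1\otimes e$ this gives $v_\alpha(\bar\alpha(a)\otimes e_{11})=\bar\alpha(a)\otimes e_{11}$. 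The symmetric computation on the right gives $(\bar\alpha(a)\otimes e_{11})v_\alpha=\bar\alpha(a)\otimes e_{11}$, so $v_\alpha$ commutes with $\bar\alpha(a)\otimes e$ and hence $v_\alpha(\bar\alpha(a)\otimes e)v_\alpha^*=\bar\alpha(a)\otimes e=\alpha(a\otimes e)$, as required. The strict convergence of \eqref{eqn:v-alpha} legitimises the term-by-term multiplication, as one passes through the norm-convergent partial sums multiplied against the fixed algebra element $\bar\alpha(a)\otimes e_{11}$.

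The point to watch is exactly this last commutation. The corner $\bar\alpha(a)\otimes e$ is \emph{not} of the form $1\otimes T$, so the clean identity $v_\alpha(1\otimes T)v_\alpha^*=\alpha(1\otimes T)$ of \cref{lem:alpha_on_cpts} does not apply to it; collapsing the series instead relies crucially on the hypothesis $\alpha(1\otimes e)=1\otimes e$ that defines $\Aute{\As}$. Once both cases are in hand, the factorisation $a\otimes e_{ij}=(1\otimes e_{i1})(a\otimes e)(1\otimes e_{1j})$ together with multiplicativity of the two automorphisms forces their agreement on every elementary tensor, which completes the argument by continuity and density.
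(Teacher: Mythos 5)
Your proof is correct and follows essentially the same route as the paper: reduce by density to elementary tensors, handle the corner $a\otimes e$ by collapsing the defining series for $v_\alpha$ using the hypothesis $\alpha(1\otimes e)=1\otimes e$, and extend to general $a\otimes e_{kl}$ via the factorisation $(1\otimes e_{k1})(a\otimes e)(1\otimes e_{1l})$ together with \cref{lem:alpha_on_cpts}. The only cosmetic difference is that the paper collapses the double sum $v_\alpha(\bar\alpha(a)\otimes e_{11})v_\alpha^*$ in one step, whereas you verify the two one-sided products and conclude by commutation.
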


\begin{proof}
	Since the finite rank operators $\sum_{i,j=1}^n a_{ij} \otimes e_{ij}$ are dense in $\As$, it suffices to show the claim for elements of the form $a \otimes e_{kl}$. First consider the case $k = l = 1$. 
	\begin{align*}
		&(\Ad_{v_{\alpha}} \circ\, (\bar{\alpha} \otimes \id{\bK}))(a \otimes e_{11})\\
        =\ & v_{\alpha} \,(\bar{\alpha}(a) \otimes e_{11})\, v_{\alpha}^* \\
		=\ & \sum_{i,j=1}^\infty \alpha(1 \otimes e_{i1}) (1 \otimes e_{1i}) (\bar{\alpha}(a) \otimes e_{11}) (1 \otimes e_{j1}) \alpha(1 \otimes e_{1j}) \\
		=\ & \alpha(1 \otimes e_{11}) (\bar{\alpha}(a) \otimes e_{11}) \alpha(1 \otimes e_{11})\\
        =\ & \bar{\alpha}(a) \otimes e_{11}\ .
	\end{align*}
	The general case follows from the above together with Lemma \ref{lem:alpha_on_cpts}:
	\begin{align*}
		&(\Ad_{v_{\alpha}} \circ\, (\bar{\alpha} \otimes \id{\bK}))(a \otimes e_{kl}) = v_{\alpha} \,(1 \otimes e_{k1})(\bar{\alpha}(a) \otimes e_{11})(1 \otimes e_{1l})\, v_{\alpha}^*	 \\
		=\ & \alpha(1 \otimes e_{k1})\,\alpha(a \otimes e_{11})\,\alpha(1 \otimes e_{1l}) = \alpha(a \otimes e_{kl}) \qedhere
	\end{align*} 
\end{proof}

\begin{lemma}\label{lem:ctstostrict}
	Let $A$ be a unital separable $C^*$-algebra. The map 
	\[
		\psi \colon \Aute {\As} \to UM(A^s) \quad , \quad \alpha \mapsto v_{\alpha}
	\]
	with $v_{\alpha}$ defined in \eqref{eqn:v-alpha} is continuous when the domain is equipped with the point-norm topology and the codomain with the strict topology.
\end{lemma}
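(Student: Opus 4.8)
The plan is to verify continuity sequentially, which suffices because $\Aute{\As}$ carries the (metrisable) point-norm topology for separable $A$. So I would fix a sequence $\alpha_n \to \alpha$ in $\Aute{\As}$ and write $v_n = v_{\alpha_n}$, $v = v_\alpha$; by \cref{lem:alpha_on_cpts} these are unitaries in $UM(\As)$, so the differences $v_n - v$ are uniformly bounded by $2$. A bounded net converges strictly exactly when it converges in both module seminorms, so I must show $\|(v_n - v)b\| \to 0$ and $\|b(v_n - v)\| \to 0$ for every $b \in \As$. Since $\{v_n - v\}$ is uniformly bounded and the finite-rank elements $a \otimes e_{kl}$ span a dense subspace of $\As$, a standard $\varepsilon/3$-approximation reduces both statements to the case $b = a \otimes e_{kl}$ with $a \in A$.

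First I would dispose of the left module seminorm. Since $(1 \otimes e_{1i})(a \otimes e_{kl}) = \delta_{ik}\, a \otimes e_{1l}$, the defining sum \eqref{eqn:v-alpha} collapses to the single term $v(a \otimes e_{kl}) = \alpha(1 \otimes e_{k1})\,(a \otimes e_{1l})$, and likewise for $v_n$. As $A$ is unital, $1 \otimes e_{k1} \in \As$, so point-norm convergence gives
\[
	\|(v_n - v)(a \otimes e_{kl})\| \le \|\alpha_n(1 \otimes e_{k1}) - \alpha(1 \otimes e_{k1})\|\,\|a\| \longrightarrow 0\ .
\]

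The right module seminorm is the genuinely delicate part, and I expect it to be the main obstacle: passing to adjoints via $\|b(v_n-v)\| = \|(v_n^* - v^*)b^*\|$, the analogue of the formula above for $v^*(a\otimes e_{kl})$ does \emph{not} collapse to a finite sum, so it cannot be controlled termwise. To get around this I would cut by the finite-rank projections $p_N = \sum_{i=1}^N e_{ii} \in \bK$. For any $\beta \in \Aute{\As}$ one has $(1\otimes p_N)v_\beta^* = \sum_{i\le N}(1\otimes e_{i1})\beta(1\otimes e_{1i})$, a finite sum, while \cref{lem:alpha_on_cpts} together with unitality of $\beta$ on multipliers yields the key identity
\[
	v_\beta\,(1 \otimes (1 - p_N))\,v_\beta^* = 1 - \beta(1 \otimes p_N) = \beta(1 \otimes (1-p_N))\ .
\]
Since $\beta(1\otimes(1-p_N))$ is a projection, this gives $\|(1\otimes(1-p_N))v_\beta^* c\| = \|\beta(1\otimes(1-p_N))c\|$ for all $c \in \As$. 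Writing $c = b^*$ and decomposing along $1 = (1\otimes p_N) + (1\otimes(1-p_N))$ then produces
\[
	\|(v_n^* - v^*)c\| \le \|(1\otimes p_N)(v_n^* - v^*)c\| + \|(1\otimes(1-p_N))v_n^* c\| + \|(1\otimes(1-p_N))v^* c\|\ ,
\]
where by the key identity the last two summands equal $\|\alpha_n(1\otimes(1-p_N))c\|$ and $\|\alpha(1\otimes(1-p_N))c\|$.

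It then remains to estimate the three terms for fixed $N$ and let $N \to \infty$ at the end. The first is a difference of finite sums, hence tends to $0$ as $n\to\infty$ by point-norm convergence. For the middle term I would use that $1\otimes(1-p_N)$ is a projection, so $\|\alpha_n(1\otimes(1-p_N))c\|^2 = \|c^* c - c^*\alpha_n(1\otimes p_N)c\|$; the crucial point is that $1\otimes p_N$ is \emph{compact}, so $\alpha_n(1\otimes p_N)\to\alpha(1\otimes p_N)$ in norm and the middle term converges, as $n\to\infty$, to $\|\alpha(1\otimes(1-p_N))c\|$, matching the third term. Hence $\limsup_n\|(v_n^* - v^*)c\| \le 2\|\alpha(1\otimes(1-p_N))c\|$ for every $N$. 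Finally $\|\alpha(1\otimes(1-p_N))c\| \to 0$ as $N\to\infty$, because $1\otimes p_N \to 1$ strictly and $\alpha$ is strictly continuous on bounded sets, so the left-hand side (which is independent of $N$) vanishes. Combining the two module estimates proves $v_n \to v$ strictly, which is the desired continuity.
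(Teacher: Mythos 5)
Your proof is correct, but it takes a genuinely different (and much longer) route than the paper for the part you single out as ``the genuinely delicate part''. The left seminorm argument is identical to the paper's: reduce to $x = a \otimes e_{kl}$ by density and the uniform bound $\|v_{\alpha_n} - v_\alpha\| \leq 2$, collapse the sum \eqref{eqn:v-alpha} to the single term $\alpha(1 \otimes e_{k1})(a \otimes e_{1l})$, and invoke point-norm convergence. For the adjoints, however, the paper does not need any cutting argument: once $v_{\alpha_n} y \to v_\alpha y$ is known for \emph{all} $y \in \As$ (which the density argument already delivers), unitarity gives
\[
	\lVert v_{\alpha_n}^* x - v_\alpha^* x \rVert = \lVert v_{\alpha_n}\left( v_{\alpha_n}^* x - v_\alpha^* x\right) \rVert = \lVert x - v_{\alpha_n}\,(v_\alpha^* x) \rVert \longrightarrow \lVert x - v_\alpha v_\alpha^* x \rVert = 0\ ,
\]
where the left convergence is applied at the element $v_\alpha^* x \in \As$ (an ideal in $M(\As)$); this is what the paper's closing remark ``this argument also shows $v_{\alpha_i}^* x \to v_\alpha^* x$ by unitarity of $v_\alpha$'' means. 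Your alternative---cutting by $1 \otimes p_N$, transferring the tail $(1 \otimes (1-p_N))v_\beta^*$ to $\beta(1 \otimes (1-p_N))$ via the conjugation identity of \cref{lem:alpha_on_cpts}, using compactness of $1 \otimes p_N$ to handle the middle term, and taking $\limsup_n$ before letting $N \to \infty$---is a valid, self-contained substitute: each step checks out, including the final appeal to strict continuity of the extension of $\alpha$ to $M(\As)$, which for automorphisms is elementary since $\alpha(m)x = \alpha(m\,\alpha^{-1}(x))$. What your route buys is that it never needs to feed a non-elementary vector like $v_\alpha^* x$ back into the left-seminorm estimate, and it makes the tail control completely explicit; what it costs is most of a page for what unitarity settles in one line, since for unitaries left strict convergence automatically implies left strict convergence of the adjoints.
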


\begin{proof}
	Let $(\alpha_i)_{i \in I}$ with $\alpha_j \in \Aute {\As}$ be a net converging to $\alpha \in \Aute {\As}$ in the point-norm topology. To see that $v_{\alpha_i}x \to v_{\alpha}x$ in norm for all $x \in \As$, it suffices again to check this for $x = a \otimes e_{kl}$ by the density of the linear span of these operators in $\As$. But we have 
	\begin{align*}
		 & \lVert v_{\alpha_i}(a \otimes e_{kl}) - v_{\alpha}(a \otimes e_{kl}) \rVert \\
		 =\ & \lVert \alpha_i(1 \otimes e_{k1})\,(a \otimes e_{1l}) - \alpha(1 \otimes e_{k1})\,(a \otimes e_{1l}) \rVert
	\end{align*}
	and this converges to $0$, because $\lVert \alpha_i(1 \otimes e_{k1}) - \alpha(1 \otimes e_{k1}) \rVert$ does. This argument also shows $v_{\alpha_i}^*x \to v_{\alpha}^*x$ by unitarity of $v_{\alpha}$.
\end{proof}

\begin{lemma}\label{lem:fibrationstrict}
    Let $A$ be a unital separable C$^*$-algebra. Then the canonical map
    \begin{align*}
    \Aut{A}\times UM(\As) &\rightarrow \Autz {\As}\\
    (\alpha,u)&\mapsto \Ad_u\circ\ \alpha\otimes \id{\bK},
    \end{align*}
    where $UM(\As)$ is equipped with the strict topology, is a Hurewicz fibration.
\end{lemma}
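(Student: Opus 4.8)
The plan is to recognise the map in question as a surjective continuous homomorphism of topological groups and then to produce a continuous local section, so that it becomes a (numerable) principal bundle and hence a Hurewicz fibration. First I would equip $\mathbb{G} = \Aut A \ltimes UM(\As)$ with the semidirect product structure in which $\Aut A$ acts on $UM(\As)$ by $\alpha\cdot u = (\alpha\otimes\id{\bK})(u)$, where $UM(\As)$ carries the strict topology. One checks that $\mathbb{G}$ is a topological group and that
\[
q\colon \mathbb{G}\to\Aut{\As}\ ,\qquad q(\alpha,u)=\Ad_u\circ(\alpha\otimes\id{\bK})
\]
is a continuous homomorphism: the homomorphism identity $q(\alpha_1,u_1)\circ q(\alpha_2,u_2)=q\big(\alpha_1\alpha_2,\,u_1(\alpha_1\otimes\id{\bK})(u_2)\big)$ follows from the relation $\Ad_{u_1}(\alpha_1\otimes\id{\bK})\Ad_{u_2}=\Ad_{u_1(\alpha_1\otimes\id{\bK})(u_2)}(\alpha_1\otimes\id{\bK})$, while continuity into the point--norm topology follows from the joint continuity of conjugation in the pair (strict unitary, norm element of $\As$). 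Since $UM(\As)$ is path connected in the strict topology, $q(\alpha,u)(1\otimes e)=u(1\otimes e)u^\ast$ is homotopic to $1\otimes e$, so the image lies in $\Autz{\As}$; conversely, given $\beta\in\Autz{\As}$ the projections $\beta(1\otimes e)$ and $1\otimes e$ are homotopic, hence unitarily equivalent by some $w\in UM(\As)$, and \cref{cor:conjugation} applied to $\Ad_{w^\ast}\beta\in\Aute{\As}$ exhibits $\beta$ in the image. Thus $q$ corestricts to a surjective continuous homomorphism $\mathbb{G}\to\Autz{\As}$, which is exactly the map of the statement.

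Next I would reduce the Hurewicz property to a purely local statement. A surjective continuous homomorphism of topological groups that admits a continuous section on a neighbourhood $V$ of the identity is a locally trivial principal bundle with fibre its kernel: translating the section over the open cover $\{\beta_0 V\}_{\beta_0}$ of the base trivialises $q$. As $A$ is separable, $\Aut{\As}$ is metrisable in the point--norm topology, hence so is $\Autz{\As}$; metrisable spaces are paracompact, so this principal bundle is numerable and therefore a Hurewicz fibration. It then remains only to build the local section near $\id{\As}$.

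The construction of the section is the technical core. For $\beta$ with $\|\beta(1\otimes e)-1\otimes e\|<1$, write $p_\beta=\beta(1\otimes e)$ and set $x_\beta=p_\beta(1\otimes e)+(1-p_\beta)(1-1\otimes e)\in 1+\As$; then $x_\beta$ is invertible and the unitary $w(\beta)=x_\beta(x_\beta^\ast x_\beta)^{-1/2}\in UM(\As)$ satisfies $w(\beta)(1\otimes e)w(\beta)^\ast=p_\beta$, depends norm--continuously on $\beta$, and equals $1$ at $\beta=\id{\As}$. Hence $\gamma_\beta:=\Ad_{w(\beta)^\ast}\circ\beta$ fixes $1\otimes e$, that is $\gamma_\beta\in\Aute{\As}$, and \cref{cor:conjugation} gives $\gamma_\beta=\Ad_{v_{\gamma_\beta}}\circ(\overline{\gamma_\beta}\otimes\id{\bK})$, whence $\beta=q\big(\overline{\gamma_\beta},\,w(\beta)\,v_{\gamma_\beta}\big)$. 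I would then define $s(\beta)=(\overline{\gamma_\beta},\,w(\beta)\,v_{\gamma_\beta})$, which satisfies $s(\id{\As})=(\id{A},1)$ and $q\circ s=\id$.

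The main obstacle, and the place where the earlier lemmas are indispensable, is the continuity bookkeeping across the strict topology: one must verify that $\mathbb{G}$ is a topological group, that $q$ is continuous, and above all that $s$ is continuous into the strict topology. The last point is exactly where \cref{lem:ctstostrict} enters: the map $\beta\mapsto\gamma_\beta$ is point--norm continuous, and \cref{lem:ctstostrict} makes $\gamma\mapsto v_\gamma$ continuous into the strict topology, so that $w(\beta)\,v_{\gamma_\beta}$ is a strictly continuous product of unitaries. Once these routine but delicate continuity checks are in place, the reduction of the second paragraph completes the proof.
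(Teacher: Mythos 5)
Your proof is correct and takes essentially the same route as the paper's: reduce via metrisability (hence paracompactness) of $\Autz{\As}$ to producing a continuous local section near $\id{\As}$, obtain a unitary conjugating $\beta(1\otimes e)$ back to $1\otimes e$ (the paper cites \cite[Proposition II.3.3.5]{BL06} where you construct $w(\beta)=x_\beta(x_\beta^*x_\beta)^{-1/2}$ by hand), and assemble the section from $\overline{\gamma_\beta}$, $w(\beta)$ and $v_{\gamma_\beta}$ exactly as the paper does, with \cref{cor:conjugation} and \cref{lem:ctstostrict} supplying the algebra and the strict-topology continuity. The only point you wave through, that the semidirect product $\mathbb{G}$ is a topological group, rests on the joint strict continuity of $(\alpha,u)\mapsto(\alpha\otimes\id{\bK})(u)$, which is a genuine check (it is the first half of the paper's proof of \cref{lem:crossedmodulestrict}, established there independently of this lemma); note, however, that your translation argument really only needs continuity of left translations in $\mathbb{G}$, which is immediate.
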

\begin{proof}
    As $A$ is separable $\Autz{A^s}$ is metrisable and hence paracompact. Thus it suffices to show that the map $\pi: \Aut{A}\times UM(A^s)\rightarrow \Autz \As$ admits a continuous local section for an open set around $\id \As$. Let
    \[
    \cU=\{\alpha\in \Autz \As :\|\alpha(1\otimes e)-1\otimes e\|<1/2\}
    \]
    which is an open set around $\id \As$. By \cite[Proposition II.3.3.5]{BL06} there exists a unitary $u_{\alpha}\in UM(\As)$ for all $\alpha \in \cU$ such that $\Ad_{u_\alpha} \alpha\in \Aute{\As}$ and such that the assignment $\alpha\rightarrow u_{\alpha}$ is continuous from the point norm to the norm topology (and hence also to the strict topology). We denote by 
    \begin{align*}
    s:\cU&\rightarrow \Aute {\As}\times UM(A^s)\\
    \alpha&\mapsto (\Ad_{u_\alpha} \alpha,u_{\alpha}^*)
    \end{align*}
    which is continuous from the point norm topology to the product of the point norm and strict topology. Moreover, the map
    \begin{align*}
        f:\Aute{A^s}\times UM(\As)&\rightarrow \Aut{A}\times UM(\As)\\
        (\beta,u)&\mapsto (\overline{\beta},uv_\beta)
    \end{align*}
    is continuous by Lemma \ref{lem:ctstostrict}. We claim that $f\circ s$ is a section of $\pi$ on $\cU$. Indeed for $\alpha\in \cU$ and denoting by $\beta=\Ad_{u_\alpha}\alpha$ one has that
    \begin{align*}
        \pi(f\circ s(\alpha))&=\pi(\overline{\beta},u_\alpha^*v_{\beta})\\
        &=\Ad_{u_\alpha^*}\beta\\
        &=\alpha.
    \end{align*}\end{proof}
\begin{lemma}\label{lem:crossedmodulestrict}
    Let $A$ be a separable, unital C$^*$-algebra. Then the action of $\Autz \As$ on $UM(\As)$ is continuous when $UM(\As)$ is equipped with the strict topology. Thus the map
    \[
    \Ad:UM(\As)\rightarrow \Autz \As
    \]
    with $UM(\As)$ equipped with the strict topology defines a crossed module which we denote by $\cG_A^{0}$. Moreover $UM(A^s)$ is well pointed in the strict topology.
\end{lemma}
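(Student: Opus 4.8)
The plan is to separate the algebraic content from the genuinely topological content. Conditions (i) and (ii) of \cref{def:crossed_mod} are verified by exactly the same computation as for $\cG_A$, now with $\alpha$ ranging over $\Autz{\As}$ and $u$ over $UM(\As)$, where each $\alpha$ is understood through its unique strictly continuous extension to $M(\As)$; thus $\Ad_{\alpha(u)} = \alpha\,\Ad_u\,\alpha^{-1}$ and $\Ad_{u_1}(u_2) = u_1 u_2 u_1^{-1}$ hold verbatim. What remains are three topological assertions: that $\Ad$ lands in $\Autz{\As}$, that $\Ad$ and the action are continuous for the strict topology on $UM(\As)$ and the point-norm topology on $\Autz{\As}$, and well-pointedness. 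Throughout I would use $M(\As) \cong \mathcal{L}(H_A)$ with the strict topology equal to the $\ast$-strong topology, and the fact that since $A$ is separable $\As$ is $\sigma$-unital, so the strict topology is metrisable on the bounded set $UM(\As)$.

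For the target of $\Ad$, I would invoke the Kuiper-type theorem of Cuntz and Higson: the unitary group $U(M(\As))$ of the stable algebra $\As$ is contractible, in particular strictly path-connected, in the strict topology. Given $u \in UM(\As)$, choose a strictly continuous path $t \mapsto u_t$ from $1$ to $u$. Since $1 \otimes e \in \As$, strict continuity makes $t \mapsto u_t(1\otimes e)$ norm-continuous, hence so is the path of projections $p_t = u_t(1\otimes e)u_t^* = \big(u_t(1\otimes e)\big)\big(u_t(1\otimes e)\big)^*$ running from $1\otimes e$ to $\Ad_u(1\otimes e)$. Thus $\Ad_u(1 \otimes e) \sim_h 1\otimes e$ and $\Ad_u \in \Autz{\As}$.

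Continuity of $\Ad$ from the strict to the point-norm topology is a short estimate: for $u_i \to u$ strictly and $a \in \As$, writing $u_i a u_i^* - u a u^* = (u_i - u)a\,u_i^* + u\,a(u_i^* - u^*)$ and using $\|(u_i-u)a\| \to 0$ and $\|a(u_i^*-u^*)\| \to 0$ bounds the norm. The delicate point, and what I expect to be the main obstacle, is the joint continuity of the action $(\alpha,u) \mapsto \alpha(u)$ as a map $(\text{point-norm})\times(\text{strict}) \to (\text{strict})$: strict convergence $u_i \to u$ only controls $(u_i-u)a$ for a \emph{fixed} $a$, whereas the action transports $a$ to the moving element $\alpha_i^{-1}(a)$. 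I would first record that inversion is point-norm continuous via the identity $\|\alpha_i^{-1}(a) - \alpha^{-1}(a)\| = \|\alpha_i^{-1}\big((\alpha - \alpha_i)(\alpha^{-1}(a))\big)\| = \|(\alpha-\alpha_i)(\alpha^{-1}(a))\|$, so $\alpha_i^{-1}(a) \to \alpha^{-1}(a)$ in norm. Then for $\alpha_i \to \alpha$ point-norm and $u_i \to u$ strictly I would split
\[
	\alpha_i(u_i)a - \alpha(u)a = \alpha_i\big((u_i-u)\alpha_i^{-1}(a)\big) + \big(\alpha_i(u)a - \alpha(u)a\big).
\]
The first summand has norm $\|(u_i-u)\alpha_i^{-1}(a)\|$; writing $\alpha_i^{-1}(a) = \alpha^{-1}(a) + r_i$ with $\|r_i\|\to 0$ it is bounded by $\|(u_i-u)\alpha^{-1}(a)\| + 2\|r_i\| \to 0$, the first term vanishing by strict convergence of $u_i$ against the fixed element $\alpha^{-1}(a)$. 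For the second summand, setting $b = \alpha^{-1}(a)$ and inserting $\alpha_i(b)$ gives $\alpha_i(u)a - \alpha(u)a = \alpha_i(u)(\alpha - \alpha_i)(b) + (\alpha_i - \alpha)(ub)$, of norm at most $\|u\|\,\|(\alpha-\alpha_i)(b)\| + \|(\alpha_i-\alpha)(ub)\| \to 0$ by point-norm convergence at the fixed elements $b$ and $ub$. The left strict seminorms are handled symmetrically, using $a u_i$-convergence and inversion in the mirrored form. This establishes continuity of the action and, with the above, that $\cG_A^0$ is a crossed module.

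Finally, for well-pointedness I would argue that $UM(\As)$ is a topological group in the strict topology, multiplication and inversion being jointly strictly continuous on the bounded set $UM(\As)$ by the usual two-term estimates. By Cuntz--Higson it is strictly contractible via some $H$ with $H_0 = \mathrm{id}$ and $H_1 \equiv 1$; replacing $H$ by the group-swindle $\widetilde H(x,t) = H(x,t)\,H(1,t)^{-1}$ produces a based contraction with $\widetilde H(1,t) = 1$ for all $t$, continuity being inherited from the topological-group operations. Since the strict topology on $UM(\As)$ is metrisable, the function $\varphi(x) = \min(1, d(x,1))$ satisfies $\varphi^{-1}(0) = \{1\}$, and the pair $(\widetilde H, \varphi)$ exhibits $(UM(\As), 1)$ as an NDR pair; hence the inclusion $\{1\} \hookrightarrow UM(\As)$ is a closed cofibration, i.e.\ $UM(\As)$ is well-pointed.
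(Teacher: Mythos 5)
Your proof is correct, but it reaches the key continuity statement by a genuinely different route than the paper. The paper proves joint continuity of the $\Autz{\As}$-action in two stages: it first establishes joint continuity of $(\alpha,u)\mapsto(\alpha\otimes\id{\bK})(u)$ on $\Aut{A}\times UM(\As)$ via the approximate unit $1\otimes p_m$, and then handles a general sequence $\alpha_n\to\id{\As}$ in $\Autz{\As}$ by invoking the Hurewicz fibration $\pi\colon\Aut{A}\times UM(\As)\to\Autz{\As}$ of \cref{lem:fibrationstrict}: a continuous local section near the identity writes $\alpha_n=\Ad_{v_n}\circ\,\beta_n\otimes\id{\bK}$ with $\beta_n\to\id{A}$ and $v_n\to 1$ strictly, reducing everything to the first stage plus continuity of multiplication. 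You instead prove joint continuity directly, for nets, by the decomposition $\alpha_i(u_i)a-\alpha(u)a=\alpha_i\bigl((u_i-u)\alpha_i^{-1}(a)\bigr)+\bigl(\alpha_i(u)a-\alpha(u)a\bigr)$, using point-norm continuity of inversion in $\Aut{\As}$ and the intertwining identity $\bar\alpha(m)\alpha(b)=\alpha(mb)$ for the strict extension; I checked the estimates and they close. Your argument is more self-contained (it never needs the local section, hence not \cite[Proposition II.3.3.5]{BL06}) and it proves something slightly stronger, namely continuity of the action of all of $\Aut{\As}$, not just $\Autz{\As}$; the paper's route, by contrast, recycles \cref{lem:fibrationstrict}, which it needs anyway for \cref{thm:weakhomeq}, so the cost there is amortised. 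You also verify two points the paper's proof leaves implicit: that $\Ad_u\in\Autz{\As}$ for $u\in UM(\As)$ (via strict path-connectedness of $UM(\As)$ and the resulting norm-continuous path of projections $u_t(1\otimes e)u_t^*$) and strict-to-point-norm continuity of $\Ad$. For well-pointedness the two proofs essentially coincide (metrisability of the strict topology on the bounded set $UM(\As)$, a based contraction, and the NDR pair with $\varphi(x)=\min\{1,d(x,1)\}$); the only difference is that the paper cites \cite[Exercise 2.M]{WEGGE93} for a contraction already fixing the basepoint, whereas you manufacture the based contraction yourself with the group swindle $\widetilde H(x,t)=H(x,t)H(1,t)^{-1}$, which is legitimate since $UM(\As)$ is a topological group in the strict topology.
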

\begin{proof}
    Firstly, the map
    \begin{align}\label{eqn:strictcont1}
    \Aut{A}\times UM(A^s)&\rightarrow UM(A^s)\\
    (\alpha,u)&\mapsto (\alpha\otimes \id{\bK})(u) \notag
    \end{align}
    is jointly continuous. Indeed, let $\alpha_n\in \Aut{A}$ be a sequence converging to $\alpha\in \Aut{A}$ and $u_n\in UM(A^s)$ be a sequence converging to $u\in UM(A^s)$.\footnote{Note that as $A$ is separable both $\Aut{A}$ and $UM(A^s)$ are metrisable (also when $UM(A^s)$ is equipped with the strict topology).} Let $p_m$ be a sequence of projections in $\bK$ converging strongly to $1_{B(H)}$.  As $1\otimes p_m$ is an approximate unit for $\As$ and the sequence $\alpha_n\otimes \id{\bK}(u_n)$ is bounded, it suffices to show that for each $m\in \bN$ both
    \[
    \|\alpha_n\otimes \id{\bK}(u_n) (1\otimes p_m)-\alpha\otimes \id{\bK}(u) (1\otimes p_m)\|\longrightarrow 0
    \]
    and 
    \[
    \|(1\otimes p_m)\alpha_n \otimes \id{\bK}(u_n)-(1\otimes p_m)\alpha \otimes \id{\bK}(u)\|\longrightarrow 0
    \]
     as $n$ tends to infinity. We only show the first of these relations as the second follows in the same manner.
     \begin{align*}
        & \|\alpha_n\otimes \id{\bK}(u_n)1\otimes p_m-\alpha\otimes \id{\bK}(u)(1\otimes p_m)\|\\
        &\leq \|\alpha_n\otimes \id{\bK}((u_n-u)1\otimes p_m)\|+\|\alpha_n\otimes \id{\bK}(u(1\otimes p_m))-\alpha\otimes \id{\bK}(u(1\otimes p_m))\|\\
        &\leq \|(u_n-u)1\otimes p_m\|+\|\alpha_n\otimes \id{\bK}(u(1\otimes p_m))-\alpha\otimes \id{\bK}(u(1\otimes p_m))\|\\
        &\longrightarrow 0.
     \end{align*}
     Now, to show the lemma, we need to show that the map 
     \begin{align}\label{eqn:strictcont2}
     \Autz{A^s}\times UM(A^s)\rightarrow UM(A^s)\\
     (\alpha,u)\mapsto \alpha(u)\notag
     \end{align}
     is jointly continuous. For this it suffices to show that if $\alpha_n\rightarrow \id{A^s}\in \Autz{A^s}$ and $u_n\rightarrow u\in UM(A^s)$ strictly then $\alpha_n(u_n)\rightarrow u$ strictly. For large enough $n$ we may assume that the sequence $\alpha_n$ is contained in a small enough open neighbourhood $\cV$ such that there exists a continuous section $s:\cV\rightarrow \Aut{A}\times UM(A^s)$ to the map $\pi:\Aut{A}\times UM(A^s)\rightarrow \Autz{A^s}$ of Lemma \ref{lem:fibrationstrict}. 
     \par Let $\beta_n\in \Aut{A}$ and $v_n\in UM(A^s)$ be such that $s(\alpha_n)=(\beta_n,v_n)$. By continuity of $s$ one has that $\beta_n$ converges to $\id{A}$ and $v_n$ converges to $1$ strictly. Thus, for large enough $n$,
     \[
     \alpha_n(u_n)=\Ad_{v_n}\circ\beta_n\otimes\id{\bK}(u_n)\rightarrow u
     \]
    by the joint continuity of \eqref{eqn:strictcont1}.
    \par We now turn to the well pointedness of $UM(A^s)$. We will show that $(UM(A^s),1_{M(A^s)})$ is an NDR pair as in \cite[Definition 2.24]{DAPE16}. First note that as $A^s$ is separable then $UM(A^s)$ is metrisable in the strict topology with metric $d$. Moreover, there is a deformation retract from $UM(A^s)$ to $1_{M(A^s)}$ i.e. a homotopy $H:UM(A^s)\times [0,1]\rightarrow UM(A^s)$ such that $H(u,0)=u$, $H(u,1)=1_{M(A^s)}$ for all  $u\in UM(A^s)$ and  $H(1_{M(A^s)},t)=1_{M(A^s)}$ for all $t\in [0,1]$ (see \cite[Exercise 2.M]{WEGGE93}). Thus let 
    \begin{align*}
    f:UM(A^s)&\rightarrow [0,1]\\
    u&\mapsto \min\{d(u,1),1\}
    \end{align*}
    then the pair $f,H$ witnesses that $UM(A^s)$ is well pointed.
 \end{proof}
Before we proceed to the main result of this section, we recall the notion of a comma category associated to a topological functor. Let $\cC$ and $\cD$ be topological $1$-categories, $d$ an object in $\cD$ and $F:\cC\rightarrow \cD$ a continuous functor. The \emph{comma category $F/d$} is the topological category whose object space consists of pairs $(c,f)$ with $c$ an object in $\cC$ and $f:F(c)\rightarrow d$ a morphism in $\cD$. A morphism between $(c,f)$ and $(c',g)$ consists of a morphism $l$ in $\cC$ from $c$ to $c'$ such that the diagram
\[
\begin{tikzcd}
    F(c)\ar[r,"f"]\ar[d,"F(l)" swap] & d\\
    F(c')\ar[ur,"g" swap]
\end{tikzcd}
\]
commutes.
\begin{theorem}\label{thm:weakhomeq}
    Let $A$ be a unital, separable C$^*$-algebra and $\Gamma$ a discrete group. There is a zig-zag of weak homotopy equivalences
    \[
    B^{\otimes}\cG_A\rightarrow B^\otimes \cG_A^0\leftarrow \cB\Autz{A^s}.
    \]
    Thus there is a bijection of pointed sets
    \[[B\Gamma,B^{D}\cG_A]\rightarrow [B\Gamma, \cB\Autz{A^s}]
    \]
    which is natural in $\Gamma$.
\end{theorem}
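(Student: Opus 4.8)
The plan is to realise the two arrows in the zig-zag as maps induced by morphisms of crossed modules, verify each is a weak equivalence by a different mechanism, and then transport everything to $[B\Gamma,-]$. First I would make the maps explicit using functoriality of $B^\otimes$. The left arrow comes from the morphism of crossed modules $\cG_A\to\cG_A^0$ given on unitaries by $u\mapsto u\otimes 1$ and on automorphisms by $\alpha\mapsto\alpha\otimes\id{\bK}$; the latter lands in $\Autz{A^s}$ since it fixes $1\otimes e$, and the crossed-module axioms are immediate. The right arrow comes from the morphism $(1\to\Autz{A^s})\to\cG_A^0$ that is the identity on $\Autz{A^s}$ and trivial on unitaries; since $B^\otimes(1\to\Autz{A^s})=\cB\Autz{A^s}$, this is exactly the map $\cB\Autz{A^s}\to B^\otimes\cG_A^0$.

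For the right arrow I would argue levelwise. Viewing $\cG_\otimes$ as the action groupoid of the $H$-action $u\cdot g=\partial(u)g$ on $G$, the nerve satisfies $N_n(\cG_A^0)_\otimes\cong UM(A^s)^n\times\Autz{A^s}$, while $N_n(1\to\Autz{A^s})_\otimes=\Autz{A^s}$, and the comparison is $\alpha\mapsto(1,\dots,1,\alpha)$. Because $UM(A^s)$ is contractible in the strict topology (the deformation retract exhibited in the proof of \cref{lem:crossedmodulestrict}), each of these is a homotopy equivalence. Since $UM(A^s)$ and $\Autz{A^s}$ are well-pointed (\cref{lem:crossedmodulestrict} together with separability), both nerves are good, hence proper, so the thin realisations $\Autz{A^s}=\lvert N_\ast(1\to\Autz{A^s})_\otimes\rvert\to\lvert N_\ast(\cG_A^0)_\otimes\rvert$ form a homotopy equivalence; applying $\cB$, which sends weak equivalences of well-pointed topological groups to weak equivalences, shows $\cB\Autz{A^s}\to B^\otimes\cG_A^0$ is a weak equivalence.

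For the left arrow I would apply the topological analogue of Quillen's Theorem A to the monoidal functor $F\colon(\cG_A)_\otimes\to(\cG_A^0)_\otimes$ (this is the reason the comma categories were recalled): the induced map $\lvert N_\ast(\cG_A)_\otimes\rvert\to\lvert N_\ast(\cG_A^0)_\otimes\rvert$ is a weak equivalence provided every comma category $F/\gamma$ has contractible classifying space, and $\cB$ of this weak equivalence yields the left arrow. Unpacking definitions, $F/\gamma$ is the action groupoid of $U(A)$ acting on the fibre $P_\gamma=\pi^{-1}(\gamma)$ (with $\pi$ as in \cref{lem:fibrationstrict}) by $w\cdot(\beta,u)=(\Ad_w\beta,\,u(w^*\otimes 1))$. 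Using the commutant identity $(1\otimes\bK)'\cap M(A^s)=A\otimes 1$, together with surjectivity of $\pi$ (which follows from \cref{cor:conjugation} after conjugating $\gamma(1\otimes e)$ to $1\otimes e$ by a unitary, since homotopic projections are unitarily equivalent), I would show that $P_\gamma$ is a single free $U(A)$-orbit, i.e.\ $P_\gamma\cong U(A)$ with $U(A)$ acting by translation. Hence $\lVert N_\ast(F/\gamma)\rVert\simeq EU(A)\times_{U(A)}U(A)\simeq EU(A)$ is contractible, and Theorem A gives the weak equivalence.

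Finally I would assemble the conclusion. Combining the zig-zag with the weak equivalence $B^D\cG_A\simeq B^\otimes\cG_A$ of \cref{thm:homotpyeqclassifyingspaces} (applicable since $U(A)$ is well-pointed) produces a chain of weak equivalences linking $B^D\cG_A$ and $\cB\Autz{A^s}$. As $B\Gamma$ is a CW complex for discrete $\Gamma$, each weak equivalence induces a bijection on $[B\Gamma,-]$; composing, and inverting the right arrow, gives the asserted bijection $[B\Gamma,B^D\cG_A]\to[B\Gamma,\cB\Autz{A^s}]$, with naturality in $\Gamma$ inherited from that of all constructions involved. The main obstacle will be the left arrow: setting up and justifying the topological form of Theorem A (that contractibility of the comma categories forces a weak equivalence of classifying spaces of these topological categories) and, in tandem, verifying in the strict topology that $P_\gamma$ is a single free $U(A)$-orbit. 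This last point is essentially the statement that $\pi$ is not merely a Hurewicz fibration but a principal $U(A)$-bundle, which is where the commutant computation and the local sections from \cref{lem:fibrationstrict} do the real work.
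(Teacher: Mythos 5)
Your proposal is correct and, for the essential step, coincides with the paper's argument: the paper also realises both arrows as $B^\otimes$ of the morphisms $\Phi\colon\cG_A\to\cG_A^0$ and $\iota\colon(1\to\Autz{A^s})\to\cG_A^0$, and proves that $\lVert N_\ast\Phi\rVert$ is a weak equivalence via the topological Quillen Theorem A of \cite[Theorem 4.7]{EBWI19}, with exactly the inputs you name: \cref{lem:fibrationstrict} for the fibration hypotheses, \cref{cor:conjugation} together with unitary equivalence of homotopic projections for essential surjectivity of $\Phi$ (equivalently, surjectivity of $\pi$), and the commutant identity $(1\otimes\bK)'\cap M(A^s)=A\otimes 1$ (valid since $A$ is unital, so $M(A)=A$) to identify the comma categories. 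The only organisational difference there is that the paper uses essential surjectivity to replace $\Phi/\gamma$ by $\Phi/(\alpha\otimes\id{\bK})$ and observes that the latter has object space $U(A)$ with a unique morphism between any two objects, hence is equivalent to the trivial category; your direct description of $\Phi/\gamma$ as the translation groupoid of the free transitive $U(A)$-action on $P_\gamma$ is the same statement in different packaging. Where you genuinely diverge is the right arrow: the paper runs Theorem A for $\iota$ as well (the comma category $\iota/\alpha$ being the discrete topological category on $UM(A^s)$, whose classifying space $UM(A^s)\times\Delta^\infty$ is contractible in the strict topology), whereas you argue levelwise that $N_n\iota\colon\Autz{A^s}\to\Autz{A^s}\times UM(A^s)^n$ is a homotopy equivalence and then realise. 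Your route is more elementary for that arrow and avoids checking the fibration hypotheses for $\iota$; it needs precisely the well-pointedness of $UM(A^s)$ from \cref{lem:crossedmodulestrict} to know the nerves are good (hence proper), which you correctly invoke.

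Two points to tighten before this becomes a complete proof. First, \cite[Theorem 4.7]{EBWI19} concerns fat realisations, so for the left arrow you must still pass from $\lVert N_\ast\Phi\rVert$ to the group homomorphism $\lvert N_\ast\Phi\rvert$ of thin realisations using goodness (well-pointedness of $U(A)$, which holds as it is a Banach Lie group, and of $UM(A^s)$), exactly as you do on the right, before applying $\cB$. Second, the hypotheses of the topological Theorem A are not only contractibility of the comma categories: they include Hurewicz-fibration conditions, which for $\Phi$ amount to \cref{lem:fibrationstrict} together with the easy statement that $\Aut{A}\times U(A)\to\Aut{A}$, $(\alpha,u)\mapsto\Ad_u\circ\alpha$, is a fibration. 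You flag this as ``the main obstacle'' and point at the right lemma, but in the finished argument these conditions must actually be verified rather than deferred.
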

\begin{proof}
    We will apply the topological version of Quillen's Theorem A (\cite[Theorem 4.7]{EBWI19}). Before we do so we discuss how to understand the hypothesis of Quillen's Theorem A in our setting. If $\cG_1$ and $\cG_2$ are crossed modules then any morphism 
    \[\Psi=(\psi_1,\psi_2):\cG_1=(\partial_1,H_1,G_2)\longrightarrow \cG_2=(\partial_2,H_2,G_2)\]
    induces a topological functor between the underlying $1$-categories which we still denote by $\Psi$. As $\Psi$ is a monoidal functor, it induces a continuous map $\Psi:\lVert N_*\cG_1\rVert\rightarrow \lVert N_*\cG_2\rVert$. It is straightforward to see that condition (v) of \cite[Theorem 4.7]{EBWI19} (and so also (ii) and (iii)) for the functor $\Psi$ coincides with the conditions that
    \begin{align*} 
            G_1\times H_1&\rightarrow G_1\\
            (\alpha,u)&\mapsto\partial_1(u)\alpha
    \end{align*}
    and
    \begin{align*}
            G_1\times H_2&\rightarrow G_2\\
            (\alpha,u)&\mapsto \partial_2(u)\phi_2(\alpha)
    \end{align*}
    are Hurewicz fibrations. 
    \par Consider the morphisms of crossed modules $\Phi=(\phi_1,\phi_2):\cG_A\rightarrow \cG_A^{0}$ with $\phi_1(u)=u\otimes 1_{B(\cH)}$ and $\phi_2(\alpha)=\alpha\otimes\id{\bK}$; and $\iota=(\iota_1,\iota_2):\Autz{A^s}\rightarrow \cG_A^0$ with $\iota_1(1)=1_{M(A^s)}$ and $\iota_2(\alpha)=\alpha$. We check that these satisfy the conditions of \cite[Theorem 4.7]{EBWI19}. By the proceeding discussion, $\iota$ satisfies conditions (ii) and (iii) of \cite[Theorem 4.7]{EBWI19}. That $\Phi$ satisfies conditions (ii) and (iii) follows from Lemma \ref{lem:fibrationstrict}. Also, for any $\alpha\in \Autz{A^s}$ the category $\iota/\alpha$ has as objects $u\in UM(A^s)$ and only identities as morphisms. The classifying space $\cB(\iota/\alpha)$ is hence homeomorphic to $UM(A^s)\times \Delta^\infty$ which is contractible as $UM(A^s)$ is contractible in the strict topology. Finally, since $\Phi$ is essentially surjective, then for any $\beta\in \Autz{A^s}$ there exists $\alpha\in \Aut{A}$ such that $\Phi/\beta$ is equivalent to $\Phi/(\alpha\otimes\id{\bK})$. Moreover, $\Phi/(\alpha\otimes \id{\bK})$ has object space $u\in U(A)$ and a unique morphism between any two objects. Any such category is equivalent to the trivial category with one object and one morphism and thus has contractible classifying space. 
    In particular $\Phi$ and $\iota$ satisfy the condition of \cite[Theorem 4.7]{EBWI19} and thus the continuous maps $\lVert\Phi\rVert\colon\lVert N_{\ast}\cG_A\rVert\rightarrow \lVert N_{\ast} \cG_A^0\rVert$ and $\lVert\iota\rVert \colon \lVert N_{\ast} \cC_{\Autz{A^s}}\rVert\rightarrow \lVert N_{\ast}\cG_A^0\rVert$ are weak homotopy equivalences. The unitary $U(A)$ is well-pointed as it is a Banach Lie group. Similarly, $UM(A^s)$ is well pointed in the strict topology as a consequence of Lemma \ref{lem:crossedmodulestrict}. Therefore the group homomorphisms $\lvert \Phi\rvert \colon \lvert N_{\ast}\cG_A\rvert\rightarrow \lvert N_{\ast}\cG_A^0\rvert$ and $\lvert \iota \rvert \colon \Autz{A^s}\rightarrow \lvert N_{\ast}\cG_A^0\rvert$ are also weak homotopy equivalences. As the classifying space construction preserves weak homotopy equivalences, we get the desired zig-zag of weak homotopy equivalences
    \[
    B^{\otimes}\cG_A\xrightarrow{B^{\otimes}\Phi} B^{\otimes}\cG_A^0\xleftarrow{B^{\otimes}\iota} \cB\Autz{A^s}
    \]
    which combined with Theorem \ref{thm:homotpyeqclassifyingspaces} and \cite[Proposition 4.22]{HAT02} implies the required bijection.
\end{proof}
\begin{remark}\label{rmk:compatibilityPR}
    After postcomposing the bijection of Theorem \ref{thm:weakhomeq} with the map of Lemma \ref{lem:map_on_BG}, we get a map of pointed sets from $H^1(\Gamma,\cG_A)$ to $[B\Gamma,\cB\Autz{A^s}]$. There is another canonical mapping from $H^1(\Gamma,\cG_A)$ to $[B\Gamma,\cB\Autz{A^s}]$ 
    considered in \cite{IZ23}. One can take a cocycle action on $A$, stabilize it and then apply the Packer--Raeburn stabilization trick (\cite{PARA89}) to get an action $\beta:\Gamma\rightarrow \Autz{A^s}$. After applying the classifying space functor to $\beta$, one gets the desired map $H^1(\Gamma,\cG_A)\rightarrow [B\Gamma,\cB\Autz{A^s}]$. These two maps coincide. Indeed, the second map is the composition 
    \[
    H^1(\Gamma,\cG_A)\xrightarrow{\Phi^*}H^1(\Gamma,\cG_A^0)\xrightarrow{PR} [B\Gamma,\cB\Autz{A^s}]
    \]
    where $\Phi:\cG_A\rightarrow \cG_A^0$ is the morphism of crossed modules in the proof of Theorem \ref{thm:weakhomeq} and $PR$ is induced by the Packer--Raeburn map that takes a cocycle action through $\Autz{A^s}$ and gives a unique group action through $\Autz{A^s}$ up to cocycle conjugacy (this map exists by combining \cite{PARA89} and \cite[Proposition 1.4]{paper:GabeSzabo}, it can be easily checked that the cocycle conjugacies needed to pass to this action are through $\Autz{A^s}$). By naturality of the map of Lemma \ref{lem:map_on_BG} the following diagram commutes
    \begin{equation}\label{diag:compatibilityPR}
        \begin{tikzcd}
            H^1(\Gamma,\cG_A)\ar[r]\ar[d,"\Phi^*"] & \left[B\Gamma,B^D\cG_A\right]\ar[d,"B^D\Phi"]\\
            H^1(\Gamma,\cG_A^0)\ar[r]\ar[rd,"PR" swap] &\left[B\Gamma,B^D\cG_A^0\right]\\
            & \left[B\Gamma,\cB\Autz{A^s} \right] \ar[u,"B^D\iota" swap]
        \end{tikzcd}
    \end{equation}
    which implies the required equality.
\end{remark}

\subsection{Strongly self-absorbing C$^*$-algebras}
For the remainder of this section let $A$ be a self-absorbing C$^*$-algebra and $\varphi:A\otimes A\rightarrow A$ an isomorphism. Firstly note that when $A$ is strongly self-absorbing in the sense of \cite{TOWI07} then $B^{\otimes}\cG_A$ and $\cB\Autz{A^s}$ are homotopy equivalence.
\begin{prop}
    Let $A$ be a strongly self-absorbing C$^*$-algebra. Then $B\cG_A$ and $\cB\Autz{A^s}$ are homotopy equivalent.
\end{prop}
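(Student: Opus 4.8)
The plan is to promote the zig-zag of weak homotopy equivalences of \cref{thm:weakhomeq} to genuine homotopy equivalences. Recall that it reads $B^{\otimes}\cG_A \to B^{\otimes}\cG_A^0 \leftarrow \cB\Autz{A^s}$, and that \cref{thm:homotpyeqclassifyingspaces} identifies $B^{\otimes}\cG_A$ with $B^D\cG_A$ up to weak equivalence. First I would observe that it suffices to know that the two outer spaces $B^{\otimes}\cG_A$ and $\cB\Autz{A^s}$ have the homotopy type of CW complexes; the middle term $B^{\otimes}\cG_A^0$ need not be analysed. Indeed, choosing a CW approximation $q \colon Z' \to B^{\otimes}\cG_A^0$, the bijection $q_\ast \colon [X,Z'] \to [X, B^{\otimes}\cG_A^0]$ of \cite[Proposition 4.22]{HAT02}, applied to the CW complexes $X = B^{\otimes}\cG_A$ and $X = \cB\Autz{A^s}$, lets both weak equivalences be factored through $Z'$ up to homotopy. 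The resulting maps $\widetilde{f}, \widetilde{g}$ out of these CW complexes into the CW complex $Z'$ are weak equivalences, hence homotopy equivalences by Whitehead's theorem \cite[Theorem 4.5]{HAT02}, and composing one with a homotopy inverse of the other gives $B^{\otimes}\cG_A \simeq \cB\Autz{A^s}$.

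The right-hand space is handled by strong self-absorption. By \cite[Theorem 2.5]{DAPE16} the group $\Autz{A^s}$ is the identity component of $\Aut{A^s}$, and for strongly self-absorbing $A$ this group is known to have the homotopy type of a CW complex, as follows from the infinite loop space structure established in \cite{DAPE16}. Since it is also well-pointed, its bar construction $\cB\Autz{A^s}$ again has the homotopy type of a CW complex.

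The main obstacle is the left-hand space $B^{\otimes}\cG_A = \cB\,\lvert N_\ast (\cG_A)_\otimes\rvert$, which I would treat levelwise. Each simplicial level of $N_\ast (\cG_A)_\otimes$ is a finite product of copies of $U(A)$ and $\Aut{A}$, so the task reduces to showing that these two groups have the homotopy type of CW complexes. For $U(A)$ this is immediate: it is a Banach Lie group, hence an ANR, and ANRs have CW homotopy type. For $\Aut{A}$ in the point-norm topology this is precisely where strong self-absorption is essential, and I would again invoke the homotopy-theoretic regularity of automorphism groups of strongly self-absorbing algebras from \cite{DAPE16}. Granting CW homotopy type of both groups, and using that $U(A)$ and $\Aut{A}$ are well-pointed so that $N_\ast (\cG_A)_\otimes$ is a good simplicial space, the thin realisation $\lvert N_\ast (\cG_A)_\otimes\rvert$ and its bar construction $B^{\otimes}\cG_A$ inherit the CW homotopy type. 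Verifying the CW homotopy type of $\Aut{A}$ is the crux; the remaining realisation bookkeeping is routine. With both endpoints established to be CW, the reduction of the first paragraph completes the argument.
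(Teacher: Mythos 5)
Your proof is correct, and its core strategy coincides with the paper's: establish that the relevant spaces have the homotopy type of CW complexes and then upgrade the weak equivalences of \cref{thm:weakhomeq} to genuine homotopy equivalences via Whitehead's theorem. The difference lies in the reduction. The paper proves CW homotopy type for \emph{all three} spaces in the zig-zag, including the middle one $B^\otimes\cG_A^0$ (using that $UM(A^s)$ is contractible in the strict topology), and then applies Whitehead's theorem to each leg of the zig-zag directly; it also obtains the CW type of $\cB\Autz{A^s}$ in one stroke from \cite[Corollary 2.9]{DAPE16}, rather than deriving it from the CW type of the group as you do. You instead bypass the middle space altogether by factoring both weak equivalences through a CW approximation $Z' \to B^\otimes\cG_A^0$ via \cite[Proposition 4.22]{HAT02}; this buys you the freedom of never analysing the simplicial levels of $N_\ast(\cG_A^0)_\otimes$, at the cost of one extra (standard) approximation step. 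Both reductions are sound. A point in your favour: you note explicitly that the levels of the nerve $N_\ast(\cG_A)_\otimes$ are $\Aut{A}\times U(A)^n$, so that CW homotopy type of the object space $\Aut{A}$ is needed in addition to that of $U(A)$, and that this is exactly where strong self-absorption enters (indeed $\Aut{A}$ is then even contractible by \cite{DAPE16}). The paper's own proof records only the $U(A)$ and $UM(A^s)$ factors and leaves the object-space factors $\Aut{A}$ and $\Autz{A^s}$ implicit, so on this step your write-up is the more complete one.
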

\begin{proof}
    By the proof of Theorem \ref{thm:weakhomeq} there are weak homotopy equivalences $B^{\otimes}\cG_A\rightarrow B\cG_A^0$ and $\cB\Autz{A^s}\rightarrow B\cG_A^0$. As $A$ is strongly self-absorbing it follows from \cite[Corollary 2.9]{DAPE16} that $\cB\Autz{A^s}$ has the homotopy type of a CW-complex. As $U(A)$ and $UM(A^s)$ are both well-pointed, $N_\ast\cG_A$ and $N_\ast\cG_A^0$ are good, and hence proper simplicial spaces in the sense of Definition \ref{def:good_proper}. Note that $U(A)$ has the homotopy type of a CW-complex as it is a Banach Lie group, while $UM(A^s)$ is contractible and therefore has the homotopy type of a CW-complex as well. By \cite[Corollary A.6]{MAY74} so do $\lvert N_\ast \cG_A \rvert$ and $\lvert N_\ast \cG_A^0 \rvert$. Thus by \cite[Proposition A.1 (i)]{SE74} both $B^\otimes \cG_A$ and $B^\otimes \cG_A^0$ have the homotopy type of a CW-complex. It follows from Whitehead's theorem that $B^{\otimes}\cG_A$ and $\cB\Autz{A^s}$ are homotopy equivalent.
\end{proof}
When $A$ is self-absorbing there is a morphism of crossed modules
\[
\Phi=(\phi_1,\phi_2):\cG_A\times \cG_A\rightarrow \cG_A
\]
where $\phi_1:U(A)\times U(A)\rightarrow U(A)$ and $\phi_2:\Aut{A}\times \Aut{A}\rightarrow \Aut{A}$ with $\phi_1(u\times v)=\varphi(u\otimes v)$ and $\phi_2(\alpha\times \beta)\rightarrow \varphi\circ \alpha\otimes \beta\circ \varphi^{-1}$. In the same way one can define a morphism of crossed modules $\cG\times \cG\rightarrow \cG$ for the crossed modules $\cG=P\cG_A,S\cG_A^\tau,\tilde{\cG}_A,\cG_A^{0}$ whereby in the last case one chooses an isomorphism $\psi:(A\otimes \bK)\otimes(A\otimes \bK)\rightarrow A\otimes \bK$. The morphism $\Phi$ induces a monoid structure on $H^1(\Gamma,\cG_A)$ by the composition
\[
H^1(\Gamma,\cG_A)\times H^1(\Gamma,\cG_A)\rightarrow H^1(\Gamma,\cG_A\times \cG_A)\xrightarrow{\Phi^*} H^1(\Gamma,\cG_A)
\]
which preserves $H^1_{ff}(\Gamma,\cG_A)$. We also have a monoid structure on $[B\Gamma,B^D\cG_A]$ by
\[
[B\Gamma,B^D\cG_A]^{\times 2}\rightarrow [B\Gamma,(B^D\cG_A)^{\times 2}]\rightarrow [B\Gamma,B^D(\cG_A\times \cG_A)]\xrightarrow{B^D \Phi}[B\Gamma,B^D\cG_A].
\]
Where the third map exists by \cite[Theorem 7.2]{EBWI19} and \cite[Proposition 4.22]{HAT02}. Similarly one has monoid structures on $[B\Gamma,B^D\cG]$ and on $[B\Gamma,B^\otimes \cG]$ for $\cG=P\cG_A,S\cG_A^{\tau},\tilde{\cG}_A$ and $\cG_A^{0}$.
\begin{prop}\label{prop:homotopygroup}
Let $A$ be a strongly self-absorbing C$^*$-algebra. Then the monoid $[B\Gamma,B^D\cG_A]$ is a group.
\end{prop}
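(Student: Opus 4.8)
The plan is to reduce the statement to the grouplikeness of the H-space $\cB\Autz{\As}$, transporting the monoid structure along the equivalences of \cref{thm:homotpyeqclassifyingspaces} and \cref{thm:weakhomeq}. Since $\Gamma$ is discrete, $B\Gamma$ has the homotopy type of a CW complex, so the weak homotopy equivalences in the zig-zag $B^D\cG_A \simeq B^\otimes\cG_A \to B^\otimes\cG_A^0 \leftarrow \cB\Autz{\As}$ induce natural bijections on $[B\Gamma,-]$, giving a natural bijection $[B\Gamma, B^D\cG_A] \cong [B\Gamma, \cB\Autz{\As}]$. It then suffices to show (i) that this bijection is an isomorphism of monoids, and (ii) that $[B\Gamma,\cB\Autz{\As}]$ is a group.

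For (i) I would use that every monoid operation in play is induced by a tensor-product morphism of crossed modules $\cG\times\cG\to\cG$ (built from $\varphi$, respectively from a chosen isomorphism $\psi\colon\As\otimes\As\to\As$), together with the natural product comparison of \cite[Theorem 7.2]{EBWI19}. The two crossed-module morphisms defining the zig-zag, namely the stabilization $\cG_A\to\cG_A^0$ and $\iota\colon\Autz{\As}\to\cG_A^0$ from the proof of \cref{thm:weakhomeq}, are monoidal up to homotopy with respect to these multiplications: for $\iota$ this is immediate since both sides use $\psi$, while for the stabilization it amounts to comparing $\varphi\otimes\id{\bK}$ with $\psi$, which agree up to the reshuffling isomorphism $\bK\otimes\bK\cong\bK$ and hence up to an inner perturbation that is absorbed by the crossed-module structure. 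Consequently their realizations are H-maps, and functoriality of $B^D$ and $B^\otimes$ together with naturality of the product comparison make the induced bijection compatible with the operations, i.e.\ a monoid isomorphism. I expect this verification of homotopy-monoidality of the stabilization map to be the main obstacle, as it is the one point that genuinely uses the coherence of strong self-absorption rather than formal properties of the bar construction.

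For (ii) I would observe that $\cB\Autz{\As}$ is path-connected (it is the classifying space of a topological group) and, by the preceding proposition together with \cite[Corollary 2.9]{DAPE16}, has the homotopy type of a CW complex; it carries the H-space structure with multiplication $\mu$ obtained by delooping the tensor-product multiplication on $\Autz{\As}$. A path-connected H-space of CW type is grouplike: both shearing maps $(x,y)\mapsto(x,\mu(x,y))$ and $(x,y)\mapsto(\mu(x,y),y)$ act on $\pi_n$ of the square for $n\ge 1$ by the invertible shears $(a,b)\mapsto(a,a+b)$ and $(a,b)\mapsto(a+b,b)$ (using that the H-space product realizes the group law on $\pi_n$) and are bijective on $\pi_0$, hence are weak and therefore genuine homotopy equivalences; their homotopy inverses restrict to a left and a right homotopy inverse for $\mu$. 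Alternatively, $\cB\Autz{\As}$ is an infinite loop space by \cite{DAPE16}, which gives grouplikeness directly.

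Combining the two points, every class in $[B\Gamma,\cB\Autz{\As}]$ admits both a left and a right inverse, and since an associative unital monoid in which every element has a left and a right inverse is a group, $[B\Gamma,\cB\Autz{\As}]$ is a group. As it is isomorphic as a monoid to $[B\Gamma, B^D\cG_A]$ by (i), the latter is a group as well.
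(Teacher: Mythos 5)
Your proof is correct and follows essentially the same strategy as the paper's: transport the monoid structures along the zig-zag to $[B\Gamma,\cB\Autz{\As}]$ and use that the latter is a group. The differences are worth recording. The paper organises your step (i) as a $3\times 3$ diagram of homotopy sets whose middle column is $[B\Gamma,B\Flax_{\cG}]$ for $\cG=\cG_A,\cG_A^0,\cC_{\Autz{\As}}$, so that every horizontal bijection comes from the natural zig-zag of \cref{cor:homotopyequivalence} and every vertical one from a morphism of crossed modules; compatibility with the multiplications is then pure naturality. What this conceals, and what you rightly single out as the delicate point, is that the vertical maps are monoid maps only if the multiplication morphisms $\cG\times\cG\to\cG$ are chosen coherently, and your justification here is the one imprecise step: for an arbitrary $\psi$, the discrepancy between $\psi$ and $(\varphi\otimes\theta)\circ\Sigma$ (with $\theta\colon\bK\otimes\bK\to\bK$ an isomorphism and $\Sigma$ the flip $(A\otimes\bK)^{\otimes 2}\cong(A\otimes A)\otimes(\bK\otimes\bK)$) is conjugation by the automorphism $\psi\circ\Sigma^{-1}\circ(\varphi\otimes\theta)^{-1}$ of $\As$, which need not be inner nor lie in $\Autz{\As}$ (for a bad choice of $\psi$ it can move $[1\otimes e]$ in $K_0(\As)$), so it is not automatically ``absorbed by the crossed-module structure''. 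Since $\psi$ is yours to choose, the clean repair is to set $\psi=(\varphi\otimes\theta)\circ\Sigma$, after which the relevant squares of crossed-module morphisms commute strictly and naturality does the rest; this is implicitly what the paper's appeal to naturality requires. Finally, for your step (ii) the paper simply cites \cite[Corollary 3.9]{DAPE16}, whereas your shearing-map argument for path-connected H-spaces of CW type (resting on \cite[Corollary 2.9]{DAPE16} for the CW type, exactly as the paper's preceding proposition does) is a correct, self-contained substitute; your fallback via the infinite loop space structure of \cite{DAPE16} is closer to what the cited corollary actually encodes.
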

\begin{proof}
    Let $\Phi=(\phi_1,\phi_2):\cG_A\rightarrow \cG_A^0$ be the map defined by $\phi_1(u)=u\otimes 1_{B(\cH)}$ and $\phi_2(\alpha)=\alpha\otimes \id{\bK}$ and $\iota:(1\rightarrow \Autz{A^s})\rightarrow \cG_A^0$ the map which is defined as the identity on objects. Using the notation of Appendix \ref{appendix} it follows from Corollary \ref{cor:homotopyequivalence} and its proof that we have a commuting diagram
\begin{equation*}\label{diag:commutingclassifyingspaces}
\begin{tikzcd}
    \left[B\Gamma,B^{\otimes} \cG_A \right] \ar[r] \ar[d,"B^\otimes \Phi"] & \left[B\Gamma, B\cF_{\cG_A}\right]  \ar[d,"B\cF(\Phi)"]  & \left [B\Gamma,B^D\cG_A\right ] \ar[l] \ar[d,"B^D\Phi"]\\
   \left[B\Gamma,B^{\otimes} \cG_A^0\right] \ar[r] &\left[B\Gamma, B\cF_{\cG_A^0}\right]  & \left[B\Gamma,B^D\cG_A^0\right] \ar[l]\\
   \left[B\Gamma, B\Autz{A^s}\right]\ar[r] \ar[u,"B^\otimes \iota" swap] & \left[ B\Gamma,B\cF_{\cC_{\Autz{A^s}}}\right]\ar[u,"B\cF(\iota)" swap] &\left[B\Gamma,\cB\Autz{A^s}\right]\ar[l] \ar[u,"B^D\iota" swap]
\end{tikzcd}
\end{equation*}
where the horizontal arrows are isomorphisms (see also \cite[Proposition 4.22]{HAT02}). As the leftmost vertical arrows are also isomorphisms by Theorem \ref{thm:homotpyeqclassifyingspaces}, it follows that all of the maps in the diagram are isomorphisms. The set $[B\Gamma,B\cF_\cG]$ for $\cG=\cG_A,\cG_A^0,\cC_{\Autz{A^s}}$ has a canonical monoid structure by using the corresponding morphism of crossed modules $\cG\times\cG\rightarrow \cG$ and the functoriality of $B\cF$. It follows from naturality that all of the maps in the diagram above are maps of monoids. Thus, since $[B\Gamma,\cB\Autz{A^s}]$ is a group by \cite[Corollary 3.9]{DAPE16}, it follows that $[B\Gamma,B^D\cG_A]$ is also a group. \end{proof}
When $\Gamma$ is a countable discrete amenable torsion-free group with a finite CW-complex model for $B\Gamma$ then the monoid $H^1_{ff}(\Gamma,\cG_A)$ is a group by \cite[Section 4.3]{IZ23}.
\begin{theorem} \label{thm:BG_group_hom}
    Let $A$ be a strongly self-absorbing Kirchberg algebra and $\Gamma$ be a countable discrete amenable torsion-free group with a finite CW-complex model $B\Gamma$. Then the map
    \[
    H^1_{ff}(\Gamma,\cG_A)\rightarrow [B\Gamma,B^D\cG_A]
    \]
    of Remark \ref{rmk: maptoBGff} is a group isomorphism.
\end{theorem}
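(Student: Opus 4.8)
The plan is to treat the group-homomorphism property and the bijectivity separately, deducing the latter from the resolved Izumi--Matui conjecture after transporting everything through the classifying-space machinery of this section.

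First I would record that the map is a homomorphism. Both group structures are induced by the self-absorbing morphism of crossed modules $\Phi\colon\cG_A\times\cG_A\to\cG_A$: on $H^1_{ff}(\Gamma,\cG_A)$ via $\Phi^*$ after the identification $H^1(\Gamma,\cG_A\times\cG_A)\cong H^1(\Gamma,\cG_A)\times H^1(\Gamma,\cG_A)$, and on $[B\Gamma,B^D\cG_A]$ via $B^D\Phi$ after $B^D(\cG_A\times\cG_A)\simeq B^D\cG_A\times B^D\cG_A$ (using \cite[Theorem 7.2]{EBWI19} and \cite[Proposition 4.22]{HAT02}). Since the transformation of \cref{lem:map_on_BG} is natural in $\cG$ and compatible with these product decompositions, it intertwines the two multiplications; as the product preserves $H^1_{ff}$, the restricted map is a homomorphism into the group $[B\Gamma,B^D\cG_A]$.

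For bijectivity I would first reinterpret the map. By \cref{thm:weakhomeq} there is a natural bijection $[B\Gamma,B^D\cG_A]\cong[B\Gamma,\cB\Autz{A^s}]$, and by \cref{rmk:compatibilityPR} the resulting composite $H^1_{ff}(\Gamma,\cG_A)\to[B\Gamma,\cB\Autz{A^s}]$ sends the class of an outer cocycle action $(\alpha,u)$ on $A$ to $[B\beta]$, where $\beta\colon\Gamma\to\Autz{A^s}$ is the genuine action obtained from the stabilisation of $(\alpha,u)$ through $\cG_A^0$ by the Packer--Raeburn trick (\cite{PARA89}, \cite[Proposition 1.4]{paper:GabeSzabo}); fully faithfulness guarantees that $\beta$ is pointwise outer. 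I would then factor this assignment through the ``destabilisation'' identification of $H^1_{ff}(\Gamma,\cG_A)$ with the set of pointwise outer actions $\Gamma\to\Autz{A^s}$ up to cocycle conjugacy (internal to $\cG_A^0$). Surjectivity here comes from conjugating a given $\Autz{A^s}$-action pointwise into $\Aute{A^s}$---possible because $\beta_g(1\otimes e)$ is unitarily equivalent to $1\otimes e$ in $M(A^s)$, being homotopic to it---and then compressing to the corner $(1\otimes e)\,A^s\,(1\otimes e)\cong A$ as in \cref{cor:conjugation}; injectivity is the analogous corner argument, any two lifts differing by a unitary multiplier. Composing with the classification of pointwise outer actions $\Gamma\to\Autz{A^s}$ up to cocycle conjugacy by $[B\Gamma,\cB\Autz{A^s}]$---the restriction to the identity component of the resolved Izumi--Matui conjecture (\cite[Theorem 6.2]{paper:GabeSzabo} combined with \cite[Theorem 3.10]{paper:MeyerKK})---then yields the bijection.

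The hard part will be this last step: matching the dynamical equivalence relation with the homotopical one precisely at the level of the identity component. The resolved conjecture is phrased through $[B\Gamma,B\Aut{A^s}]$ and cocycle conjugacy by the full group $\Aut{A^s}$, whereas the target here is $\cB\Autz{A^s}$; one must check that for actions through $\Autz{A^s}$ the relevant equivalence agrees with the one detected by $\cB\Autz{A^s}$, equivalently that the fully faithful cocycle actions on $A$ correspond under Packer--Raeburn and Meyer's $KK$-to-homotopy translation exactly to the classes in the image of $[B\Gamma,\cB\Autz{A^s}]\to[B\Gamma,B\Aut{A^s}]$, with that map behaving injectively on these classes. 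It is precisely here that the finite model for $B\Gamma$, the torsion-freeness and amenability of $\Gamma$, and the fact that $\Autz{A^s}$ is the full identity component of $\Aut{A^s}$ for strongly self-absorbing $A$ (\cite[Theorem 2.5]{DAPE16}) are needed, together with \cref{thm:homotpyeqclassifyingspaces} to pass between $B^{D}$ and $B^{\otimes}$.
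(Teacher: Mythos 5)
Your opening steps match the paper: the homomorphism property follows from naturality of the transformation in \cref{lem:map_on_BG} with respect to the product morphism $\Phi\colon\cG_A\times\cG_A\to\cG_A$, and the paper likewise reduces bijectivity, via \cref{thm:weakhomeq} and the commuting diagram of \cref{rmk:compatibilityPR}, to showing that the composite
\[
H^1_{ff}(\Gamma,\cG_A)\xrightarrow{\ \Phi^*\ }H^1(\Gamma,\cG_A^0)\xrightarrow{\ PR\ }[B\Gamma,\cB\Autz{A^s}]
\]
is an isomorphism, followed by a diagram chase.

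The genuine gap is in how you handle that composite. The paper does not re-derive its bijectivity from the resolved Izumi--Matui conjecture; it cites \cite[Corollary 4.6]{IZ23}, which is exactly the statement that this composite, restricted to fully faithful classes, is an isomorphism under the stated hypotheses on $\Gamma$ and $A$. You instead propose to deduce it from the full-group statement $\mathcal{OA}(\Gamma,A^s)/\!\sim_{cc}\;\cong\;[B\Gamma,B\Aut{A^s}]$ (Gabe--Szabo plus Meyer), and you yourself flag the passage from $B\Aut{A^s}$ to $\cB\Autz{A^s}$ as ``the hard part'' that ``one must check'' --- but you never carry it out. That step is not a routine verification: it requires (i) comparing cocycle conjugacy implemented by arbitrary elements of $\Aut{A^s}$ with the equivalence detected by $\cB\Autz{A^s}$, and (ii) controlling the fibres and image of $[B\Gamma,\cB\Autz{A^s}]\to[B\Gamma,B\Aut{A^s}]$, on which the discrete group $\pi_0(\Aut{A^s})$ acts; this group is nontrivial in general for strongly self-absorbing Kirchberg algebras (already for $A=\cO_\infty$ there is an automorphism of $A^s$ acting by $-1$ on $K_0(A^s)\cong\Z$), so the comparison map is neither obviously injective nor surjective on the relevant classes. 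This bookkeeping is precisely the content of \cite[Section 4.3 and Corollary 4.6]{IZ23}; without either performing it or invoking that result, your argument is incomplete at its crucial step. (Your ``destabilisation'' sketch --- conjugating into $\Aute{A^s}$ and compressing to the corner as in \cref{cor:conjugation} --- is plausible but also only sketched; it is secondary to the main gap.)
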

\begin{proof}
    Firstly note that by the naturality in the coefficient crossed module, the map $H^1_{ff}(\Gamma,\cG_A)\rightarrow [B\Gamma,B^D\cG_A]$ is a unital map of monoids (and hence of groups). Consider the corresponding commuting diagram to \ref{diag:compatibilityPR} when restricted to $H^1_{ff}(\Gamma,\cG_A)$. This map consists of homomorphisms of groups with the rightmost column consisting of isomorphisms as shown in Theorem \ref{thm:weakhomeq}. Also the composition
    \[
    H^1_{ff}(\Gamma,\cG_A)\xrightarrow{\Phi^*}H^1(\Gamma,\cG_A^0)\xrightarrow{PR} [B\Gamma,\cB\Autz{A^s}]
    \]
    is an isomorphism by \cite[Corollary 4.6]{IZ23}. The result now follows from a diagram chase.
\end{proof}
 \section{The universal cover of $SU_\tau(A)$}\label{sec:covSU}
 It is well known that for unitary groups of matrix algebras one has an isomorphism $\covU{n}\cong \R\times SU(n)$. In this section we show that, under some conditions, one also has an analogous isomorphism $\covU A\cong \R \times SU_\tau(A)$. Throughout the section we assume that $A$ is a separable, unital C$^*$-algebra with a connected unitary group and that $\tau$ is a tracial state of $A$ satisfying $SU_\tau(A)=\ker(\Delta_\tau)\cap U(A)_0$.
\begin{lemma}\label{lem:continuity}
    Let $X$ be a locally path connected space and $A$ be a separable C$^*$-algebra. Let $f:X\rightarrow SU_\tau(A)$ be a map then $f$ is continuous with respect to the norm topology in $SU_\tau(A)$ if and only if it is continuous with respect to the $d_\tau$ topology.
\end{lemma}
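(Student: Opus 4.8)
The plan is as follows. The easy direction is that $d_\tau$-continuity implies norm-continuity: by Lemma~\ref{lem:estimatenorms} the $d_\tau$-topology on $SU_\tau(A)$ is finer than the norm topology, so every norm-open set is $d_\tau$-open, and hence $f^{-1}$ of such a set is open in $X$ whenever $f$ is $d_\tau$-continuous. This direction does not use local path connectedness.

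For the converse I would assume $f$ norm-continuous and fix $x_0\in X$, aiming for $d_\tau$-continuity at $x_0$. Since $d_\tau$ is left invariant, $d_\tau(f(x),f(x_0))=\el_\tau(u_x)$ with $u_x:=f(x_0)^*f(x)\in SU_\tau(A)$, and $u_x\to 1$ in norm as $x\to x_0$. Choosing a norm-neighbourhood $U$ of $x_0$ on which $\|f(x)-f(x_0)\|<1$, the spectrum of $u_x$ stays bounded away from $-1$, so the principal logarithm (norm-continuous by continuous functional calculus) gives a norm-continuous assignment $x\mapsto h_x:=\tfrac1i\log u_x\in A^{sa}$ with $u_x=e^{ih_x}$, $h_{x_0}=0$ and $\|h_x\|\to 0$. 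If I can show $\tau(h_x)=0$, then $h_x\in A^{sa}\cap\ker\tau$ exhibits $\el_\tau(u_x)\le\|h_x\|$, and $d_\tau$-continuity at $x_0$ follows from $\|h_x\|\to 0$.

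The crux is establishing $\tau(h_x)=0$, and this is exactly where local path connectedness enters. Since $SU_\tau(A)\subseteq\ker\Delta_\tau$ and $\Delta_\tau(e^{ih_x})=\tfrac1{2\pi}\tau(h_x)$ modulo $\tau_*(K_0(A))$, the real number $\tau(h_x)$ lies in the countable subgroup $2\pi\,\tau_*(K_0(A))$ of $\R$, which is totally disconnected (being a countable subset of $\R$). Using local path connectedness I would replace $U$ by the path-component $V$ of $x_0$ in $U$, an open path-connected neighbourhood. Then $x\mapsto\tau(h_x)$ is a continuous map from the connected space $V$ into the totally disconnected set $2\pi\,\tau_*(K_0(A))$, hence constant, equal to its value $\tau(h_{x_0})=0$. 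So $\tau(h_x)=0$ for all $x\in V$, which is what was needed; the norm bound $\|h_x\|\to 0$ then controls $d_\tau$ on a possibly smaller, not necessarily path-connected, neighbourhood inside $V$.

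The main obstacle is precisely the one this totally-disconnected argument overcomes: for a \emph{single} unitary close to $1$ in norm one cannot conclude $\tau(h)=0$, since $\tau_*(K_0(A))$ may be dense in $\R$ (as for a UHF algebra of infinite type, cf.\ Remark~\ref{rem:discrete}), so the two topologies genuinely differ pointwise; it is the connectedness supplied by local path connectedness of $X$ that rules out the resulting discrete jumps. The remaining points are routine: the norm-continuity of $x\mapsto\log u_x$ and the identity $\el_\tau(e^{ih})=\|h\|$ for $h\in\ker\tau$ of small norm, which is Lemma~\ref{lemma:exponentillengthsmall}.
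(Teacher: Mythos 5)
Your proof is correct and takes essentially the same approach as the paper: both directions rest on the $d_\tau$-topology being finer than the norm topology, and on combining the principal logarithm with the fact that $\Delta_\tau$ forces $\tau(\log(f(x_0)^*f(x)))$ into the countable (hence totally disconnected) group $2\pi\tau_*(K_0(A))$, so that the connectedness supplied by local path connectedness makes it vanish, after which the small norm of the logarithm bounds $d_\tau$. The only cosmetic differences are that the paper runs the connectedness argument along paths inside a path-connected neighbourhood while you apply it directly to the open path component, and that the paper concludes via the estimate of Lemma~\ref{lem:estimatenorms} where you bound $\el_\tau$ straight from its definition.
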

\begin{proof}
   As the $d_\tau$ topology is stronger than the norm topology the if direction follows. Assume $f$ is continuous with respect to the norm topology. To complete the proof it suffices to show that for any $0<\varepsilon<1$ and $x_0\in X$ the set $\{x\in X: d_{\tau}(f(x_0),f(x))<\varepsilon\}$ contains an open neighbourhood of $x_0$. By norm continuity of $f$, there exists a path connected neighbourhood $V$ of $x_0$ such that $\|f(x_0)-f(x)\|<\frac{2\epsilon}{\pi}$ for all $x\in V$. Now, for $x\in V$ one may choose a continuous path $x(t):[0,1]\rightarrow V$ with $x(0)=x_0$ and $x(1)=x$. The function
    $$t\mapsto \frac{\tau(\log(f(x_0)^{-1}f(x(t)))}{2\pi i}$$
    is norm continuous and valued in $\tau_{*}(K_0(A))$. As $A$ is separable, the subgroup $\tau_{*}(K_0(A))$ is a countable subset of $\R$ and hence completely disconnected. Therefore, this function is constant and $\tau(\log(f(x_0)^{-1}f(x)))=0$ for all $x\in V$. Moreover, as an application of spectral mapping in functional calculus, it is clear that $\|\log(f(x_0)^{-1}f(x))\|< \pi/2$ and thus one may apply Lemma \ref{lem:estimatenorms} to yield that for all $x\in V$
    $$d_\tau(f(x_0),f(x))\leq \frac{\pi}{2}\|f(x_0)-f(x)\|< \epsilon$$
    as required.
\end{proof}
\begin{lemma}\label{lem: representingunitaries}
    Let $X$ be a compact, contractible, locally path connected and let $x_0\in X$. Let $f:(X_,x_0)\rightarrow (U(A)_0,1_A)$ be a continuous map. Then there exists a unique pair of continuous maps $g:(X,x_0)\rightarrow (SU_\tau(A),1_A)$ and $h:(X,x_0)\rightarrow (\R,0)$ such that $f(x)=e^{2\pi i h(x)}g(x)$.
\end{lemma}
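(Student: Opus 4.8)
The plan is to separate uniqueness from existence and to use the de la Harpe--Skandalis determinant $\Delta_\tau$ as the device that isolates the scalar part $h$, invoking Lemma~\ref{lem:continuity} only at the very end to pass from norm-continuity to $d_\tau$-continuity. For uniqueness, suppose $f=e^{2\pi i h_1}g_1=e^{2\pi i h_2}g_2$ with each $g_i$ valued in $SU_\tau(A)$ and $h_i(x_0)=0$. Then $e^{2\pi i(h_1(x)-h_2(x))}=g_1(x)g_2(x)^{-1}\in SU_\tau(A)=\ker(\Delta_\tau)\cap U(A)_0$. Applying $\Delta_\tau$ and using that $\Delta_\tau(e^{2\pi i r\,1_A})\equiv r$ modulo $\tau_*(K_0(A))$ (because $\tau(1_A)=1$) shows $h_1(x)-h_2(x)\in\tau_*(K_0(A))$ for all $x$. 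Since $A$ is separable, $\tau_*(K_0(A))$ is a countable, hence totally disconnected, subgroup of $\R$; as $X$ is connected and $h_1-h_2$ continuous, it is constant, and its value at $x_0$ is $0$. Thus $h_1=h_2$ and $g_1=g_2$.

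For existence I would first fix a contraction $F\colon X\times[0,1]\to X$ with $F(x,0)=x_0$ and $F(x,1)=x$, so that $\gamma_x:=f\circ F(x,\cdot)$ is a continuous path in $U(A)_0$ from $1_A$ to $f(x)$. Every continuous based path is homotopic rel endpoints to a $C^1$ one, so $\tilde{\Delta}_\tau$ extends to a homotopy-invariant group homomorphism on continuous paths valued in $\R$ (using \cite[Lemme 1]{SkdlH}). Setting $h(x)=\tilde{\Delta}_\tau(\gamma_x)$, the fact that $\gamma_x$ runs from $1_A$ to $f(x)$ gives $h(x)\equiv\Delta_\tau(f(x))$ modulo $\tau_*(K_0(A))$, and since $\Delta_\tau$ is a homomorphism with $\Delta_\tau(e^{-2\pi i h(x)1_A})\equiv -h(x)$, the element $g(x):=e^{-2\pi i h(x)}f(x)$ lies in $\ker(\Delta_\tau)\cap U(A)_0=SU_\tau(A)$.

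The hard part will be continuity of $h$. Fixing $x_1$, the exponential law makes $x\mapsto\gamma_x\in C([0,1],U(A))$ norm-continuous, so $\sup_t\|\gamma_x(t)-\gamma_{x_1}(t)\|\to 0$ as $x\to x_1$. Writing $\delta_x:=\gamma_x\gamma_{x_1}^{-1}$ for the pointwise product, a path from $1_A$ to $u_x:=f(x)f(x_1)^{-1}$, we have $\sup_t\|\delta_x(t)-1_A\|\to 0$, so for $x$ near $x_1$ the path $\delta_x$ stays in a small norm-ball of $1_A$ where the principal logarithm is defined. Linearly homotoping $\log\delta_x(\cdot)$ to $t\mapsto t\log u_x$ inside the ball gives a homotopy rel endpoints; combining homotopy invariance with the homomorphism property of $\tilde{\Delta}_\tau$ then yields the local identity $h(x)-h(x_1)=\tilde{\Delta}_\tau(\delta_x)=\tfrac{1}{2\pi}\tau(\log(f(x)f(x_1)^{-1}))$, which tends to $0$ as $x\to x_1$. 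Hence $h$ is continuous. Replacing $h$ by $h-h(x_0)$ and $g$ by $e^{2\pi i h(x_0)}g$ arranges $h(x_0)=0$ and $g(x_0)=1_A$; this keeps $g$ valued in $SU_\tau(A)$ because $h(x_0)\in\tau_*(K_0(A))$ and $e^{2\pi i\tau_*(K_0(A))}\subset SU_\tau(A)$ by Remark~\ref{rem:discrete}.

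Finally, $g=e^{-2\pi i h}f$ is norm-continuous into $SU_\tau(A)$, and since $X$ is locally path connected and $A$ is separable, Lemma~\ref{lem:continuity} upgrades this to continuity in the $d_\tau$ topology carried by $SU_\tau(A)$. Together with the uniqueness argument this completes the proof. I expect the one genuinely delicate point to be the continuity of $h$: one must make precise the extension of $\tilde{\Delta}_\tau$ to continuous paths together with its homotopy invariance, and justify the local formula $h(x)-h(x_1)=\tfrac{1}{2\pi}\tau(\log(f(x)f(x_1)^{-1}))$, which is exactly where the exponential law and the small-ball homotopy enter.
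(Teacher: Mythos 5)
Your proposal is correct, but your existence argument takes a genuinely different route from the paper's. The paper exploits compactness of $X$ to regard $f$ as a unitary in $C(X,A)$: contractibility of $X$ and $f(x_0)=1_A$ give $f\in U(C(X,A))_0$, hence $f(x)=e^{i\varphi_1(x)}\cdots e^{i\varphi_n(x)}$ with $\varphi_j\in C(X,A)^{sa}$, and subtracting the scalars $\tau(\varphi_j(x))$ from each exponent produces $g_0(x)\in SU_\tau(A)$ and $h_0(x)=\tfrac{1}{2\pi}\sum_j\tau(\varphi_j(x))$ in one stroke; norm-continuity of $g_0$ is automatic from the construction, $d_\tau$-continuity follows from \cref{lem:continuity}, and only the basepoint correction remains. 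You instead define $h$ intrinsically as $\tilde{\Delta}_\tau$ of the contraction paths $\gamma_x$ and set $g=e^{-2\pi i h}f$, which lands in $SU_\tau(A)$ only via the standing assumption $SU_\tau(A)=\ker(\Delta_\tau)\cap U(A)_0$ of this section (the paper's construction of $g_0$ does not need that equality for existence, though both proofs invoke it, through \cref{rem:discrete}, elsewhere). The price of your route is exactly what you flag: one must extend $\tilde{\Delta}_\tau$ to continuous paths (standard, by replacing a path with a piecewise-$C^1$ exponential interpolation rel endpoints and using \cite[Lemme 1]{SkdlH}) and prove continuity of $h$, which your small-ball homotopy $e^{(1-s)\log\delta_x(t)+st\log u_x}$ does correctly establish. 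What your approach buys: $h$ has a coordinate-free description as the determinant of the path, and compactness of $X$ is never really used, so your argument is marginally more general; what the paper's buys: continuity comes for free and no analytic extension of $\tilde{\Delta}_\tau$ is needed. Your uniqueness argument is essentially the paper's, transported from discreteness of $e^{2\pi i\tau_*(K_0(A))}$ in $(SU_\tau(A),d_\tau)$ to total disconnectedness of the countable group $\tau_*(K_0(A))\subset\R$ (which is also how the paper proves \cref{lem:continuity}). One cosmetic slip: the prefactor in your local formula should be $\tfrac{1}{2\pi i}$ rather than $\tfrac{1}{2\pi}$, since $\log\bigl(f(x)f(x_1)^{-1}\bigr)$ is skew-adjoint.
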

\begin{proof}
As $X$ is compact we may consider $f$ as an element in $U(C(X,A))$. Moreover, as $X$ is contractible and $f(x_0)=1_A$, it follows that $f$ is homotopic to $1_A$ inside $U(C(X,A))$. Therefore, there exists continuous maps $\varphi_i \in C(X,A^{sa})=C(X,A)^{sa}$ for $i=1,2,\ldots ,n$ such that $f(x)=e^{i\varphi_1(x)}\ldots e^{i\varphi_n(x)}$ and
$$f(x)=e^{i(\varphi_1(x)-\tau(\varphi_1(x)))}\ldots e^{i(\varphi_n(x)-\tau(\varphi_n(x)))}e^{i\sum_{j=1}^n\tau(\varphi_j(x))}.$$
The mappings $g_0(x)=e^{i(\varphi_1(x)-\tau(\varphi_1(x)))}\ldots e^{i(\varphi_n(x)-\tau(\varphi_n(x)))}$ and $h_0(x)=\frac{\sum_{j=1}^n\tau(\varphi_j(x))}{2\pi}$ for $x\in X$ are continuous from $X$ to $SU_\tau(A)$ ($g_0$ is by construction continuous in norm topology, it will hence be continuous in $d_\tau$ topology by Lemma \ref{lem:continuity}) and $X$ to $\R$ respectively and satisfy the conditions of the lemma apart from preserving the basepoints.
\par As $1=f(x_0)=g_0(x_0)e^{2\pi i h_0(x_0)}$ one has that $e^{2\pi i h_0(x_0)}\in SU_\tau(A)$. Thus
$$f(x)=g_0(x)e^{2\pi ih_0(x_0)}e^{2\pi i (h_0(x)-h_0(x_0))}$$
with $g(x)=g_0(x)e^{2\pi ih_0(x_0)}\in SU_\tau(A)$ and $h(x)=h_0(x)-h(x_0)$ now also preserving the basepoints.
\par If $g,g':(X,x_0)\rightarrow (SU_\tau(A),[1_A])$ and $h,h':(X,x_0)\rightarrow (\R,1)$ are continuous maps such that 
$$g(x)e^{2\pi i h(x)}=f(x)=g'(x)e^{2\pi i h'(x)}$$
then $g(x)g'(x)^{-1}=e^{2\pi i(h(x)-h'(x))}\in \ker(\Delta_\tau)$. So $g(x)g'(x)^{-1}$ is a continuous map into $e^{2\pi i\tau_{*}(K_0(A))}\subset SU_\tau(A)$. As $e^{2\pi i \tau_{^*}(K_0(A))}$ is discrete (see Remark \ref{rem:discrete}) and $X$ is connected, then $g(x)g'(x)^{-1}$ is constant and everywhere $1_A$. So $h(x)-h'(x)=h(x_0)-h'(x_0)=0$ for all $x\in X$ too.
\end{proof}
\begin{theorem}\label{thm:universalcover1}
 Let $A$ be a unital, separable C$^*$-algebra and $\tau\in T(A)$ such that $U(A)=U(A)_0$ and $SU_\tau(A)=\ker(\Delta_\tau)\cap U(A)_{0}$ then $\covU{A}\cong \R\times \widetilde{SU_\tau}(A)$.
\end{theorem}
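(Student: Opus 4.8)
The plan is to build an explicit isomorphism out of the decomposition provided by \cref{lem: representingunitaries}, using the concrete model $\covU A = \mathcal{P}_{U(A)}/\Omega_0 U(A)$ for the universal cover. Define
\[
\Psi\colon \R\times\widetilde{SU_\tau}(A)\longrightarrow\covU A,\qquad \Psi(s,[g])=\big[\,t\mapsto e^{2\pi i s t}\,g(t)\,\big],
\]
where $[g]$ is represented by a path $g\colon[0,1]\to SU_\tau(A)$ with $g(0)=1_A$. Because $e^{2\pi i s t}1_A$ is a central scalar, the scalar factors commute past the $SU_\tau(A)$-parts, so $\Psi(s_1,[g_1])\,\Psi(s_2,[g_2])=\Psi(s_1+s_2,[g_1g_2])$ and $\Psi$ is a homomorphism sending the identity to the identity. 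It is well defined on homotopy classes since a homotopy of $g$ rel endpoints in $SU_\tau(A)$ becomes, after multiplication by the central factor, a homotopy rel endpoints in $U(A)$. The remaining work is to prove $\Psi$ is a bijection and then a homeomorphism.

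For surjectivity I would take $[\delta]\in\covU A$ and apply \cref{lem: representingunitaries} with $X=[0,1]$ to write $\delta(t)=e^{2\pi i h(t)}g(t)$ with $h(0)=0$, $g(0)=1_A$, and $g$ a path in $SU_\tau(A)$. The affine interpolation $h_r(t)=(1-r)h(t)+r\,h(1)\,t$ yields a homotopy $e^{2\pi i h_r(t)}g(t)$ whose endpoints stay fixed at $1_A$ and $e^{2\pi i h(1)}g(1)$, so $[\delta]=\Psi(h(1),[g])$ in $\covU A$.

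The crux, and the step I expect to be the main obstacle, is injectivity, where the full strength of \cref{lem: representingunitaries} for $X=[0,1]^2$ enters. Suppose $\Psi(s,[g])$ is trivial, so that $\gamma(t)=e^{2\pi i s t}g(t)$ is a null-homotopic loop. Then $g(1)=e^{-2\pi i s}1_A\in SU_\tau(A)$, and since $SU_\tau(A)\subseteq\ker\Delta_\tau$ and $\Delta_\tau(e^{2\pi i s}1_A)=s$, this forces $s\in\tau_*(K_0(A))$. I would then feed a based null-homotopy $H\colon[0,1]^2\to U(A)$ of $\gamma$ into \cref{lem: representingunitaries}, obtaining $H=e^{2\pi i\hat h}\hat g$ with continuous $\hat h\colon[0,1]^2\to\R$ and $\hat g\colon[0,1]^2\to SU_\tau(A)$ normalised at $(0,0)$. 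Uniqueness in the lemma, applied along the bottom and left edges through the basepoint, gives $\hat h(t,0)=st$ and $\hat h(0,r)=0$. On the right and top edges $\hat g$ takes scalar values $e^{-2\pi i\hat h}1_A\in SU_\tau(A)$, so there $\hat h$ is valued in the countable totally disconnected subgroup $\tau_*(K_0(A))$; by continuity and the discreteness recorded in \cref{rem:discrete}, $\hat h$ is constant along each of these two edges. Comparing the two resulting values $\hat h(1,1)=s$ (from the right edge) and $\hat h(1,1)=0$ (from the top edge) yields $s=0$. With $s=0$ the map $\hat g$ becomes a based null-homotopy of the loop $g$ inside $SU_\tau(A)$, so $[g]$ is trivial and $\Psi$ is injective.

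Finally I would upgrade the bijective homomorphism $\Psi$ to an isomorphism of topological groups. Continuity of $\Psi$ follows from continuity of $(s,g)\mapsto(t\mapsto e^{2\pi i s t}g(t))$ on path spaces, using that the $d_\tau$-topology on $SU_\tau(A)$ is finer than the norm topology (\cref{lem:estimatenorms}) so that $SU_\tau(A)\hookrightarrow U(A)$ is continuous, together with the quotient topologies defining the two covers. Since $A$ is separable, $U(A)$ and $SU_\tau(A)$ are Polish with countable fundamental groups, hence $\covU A$ and $\R\times\widetilde{SU_\tau}(A)$ are Polish groups, and the open mapping theorem for Polish groups promotes the continuous bijective homomorphism $\Psi$ to a homeomorphism. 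Equivalently, one observes that $\Psi$ intertwines the covering projections $[\,t\mapsto e^{2\pi i s t}g(t)\,]\mapsto e^{2\pi i s}g(1)$ and $[\delta]\mapsto\delta(1)$ onto $U(A)$, so that $\Psi$ is a bijective morphism of coverings and therefore a homeomorphism. This gives the desired isomorphism $\covU A\cong\R\times\widetilde{SU_\tau}(A)$.
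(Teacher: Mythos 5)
Your proof is correct and takes essentially the same route as the paper: both construct the multiplication homomorphism from $\R\times\widetilde{SU_\tau}(A)$ (the paper phrases it via $\widetilde{\R}\times\widetilde{SU_\tau}(A)$) to $\covU{A}$, and both establish bijectivity by decomposing paths and based homotopies in $U(A)$ through \cref{lem: representingunitaries} applied to a square, using the discreteness of $e^{2\pi i\tau_*(K_0(A))}$ (\cref{rem:discrete}) to conclude that the two factors are themselves homotopies rel endpoints. Your reorganisation into surjectivity plus a kernel computation, and your explicit handling of the homeomorphism question (which the paper leaves implicit), are only presentational differences.
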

\begin{proof}
    Firstly as $\R\times \widetilde{SU_\tau}(A)\cong \widetilde{\R}\times \widetilde{SU_\tau}(A)$ we will show that the homomorphism of groups
    \begin{align*}
        \Phi:\widetilde{\R}\times \widetilde{SU_\tau}(A)&\rightarrow \covU{A}\\
        h(t)\times u(t)&\mapsto e^{2\pi i h(t)}u(t)
    \end{align*}
    is an isomorphism. For this it suffices to show that any path homotopy in $\covU{A}$ is induced by a path homotopy under $\Phi$.
    \par Let $u\in U(A)$ and $f:[0,1]^2\rightarrow U(A)$ a homotopy of paths starting at $1$ and ending at $u$ i.e. that $f(s,0)=1$ and $f(s,1)=u$ for all $s\in [0,1]$.
    \par By Lemma \ref{lem: representingunitaries}, with $X=[0,1]^2/[0,1]\times \{0\}$ and $x_0=[[0,1]\times \{0\}]$, there exists continuous maps $g:[0,1]^2\rightarrow SU_\tau(A)$ and $h:[0,1]^2\rightarrow \R$ with 
$$f(s,t)=e^{2\pi ih(s,t)}g(s,t)$$
and $g(s,0)=1$ and $0=h(s,0)$. Also $u=e^{2\pi i h(s,1)}g(s,1)$ for all $s\in [0,1]$ and thus $g(s,1)g(0,1)^{-1}=e^{2\pi i(h(0,1)-h(s,1))}$ is a continuous path from $[0,1]$ into $e^{2\pi i \tau_{^*}(K_0(A))}\subset SU_\tau(A)$. As $e^{2\pi i \tau(K_0(A))}$ is discrete in the $d_\tau$ topology it follows that $g(s,1)g(0,1)^{-1}$ is constant and equal $1$ for all $s\in [0,1]$. Similarly $h(0,1)=h(s,1)$ for all $s\in [0,1]$. In particular $h$ and $g$ are path homotopies and $f$ is the image of $h\times g$ under $\Phi$.
\end{proof}
In some cases of interest the group $SU_\tau(A)$ will automatically be simply connected and so $\covU{A}\cong \R\times SU_\tau(A)$.
\begin{theorem}\label{thm:simplyconnected}
    Suppose that $Z(A)=\C$ and $\pi_1(U(A))\cong K_0(A)$.\ Then $\pi_1(SU_\tau(A))\cong \{x\in K_0(A): \tau_{*}x=0\}$. In particular if $\tau_{*}:K_0(A)\rightarrow \R$ is injective then $SU_\tau(A)$ is simply connected.
\end{theorem}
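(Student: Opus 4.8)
The plan is to study the homomorphism $\iota_* \colon \pi_1(SU_\tau(A)) \to \pi_1(U(A))$ induced by the inclusion $\iota \colon SU_\tau(A) \hookrightarrow U(A)$ and to show that, after composing with the Bott isomorphism $B_A \colon \pi_1(U(A)) \xrightarrow{\cong} K_0(A)$, it maps $\pi_1(SU_\tau(A))$ bijectively onto $\ker(\tau_* \colon K_0(A) \to \R)$. Throughout I keep the standing hypotheses of the section, so that Theorem~\ref{thm:universalcover1} applies and $ZU(A) = \bT$ since $Z(A) = \C$.

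For injectivity of $\iota_*$ I would use the isomorphism $\Phi \colon \R \times \widetilde{SU_\tau}(A) \to \covU{A}$ of Theorem~\ref{thm:universalcover1}. Its restriction to $\{0\} \times \widetilde{SU_\tau}(A)$ is the natural map $j \colon \widetilde{SU_\tau}(A) \to \covU{A}$ that sends a homotopy class of paths in $SU_\tau(A)$ to the same class regarded in $U(A)$; as $\Phi$ is injective, so is $j$. Restricting $j$ to the fibres over $1_A$ of the two universal coverings recovers exactly $\iota_*$, which is therefore injective. The interplay between the $d_\tau$-topology and the norm topology needed to regard loops on either side as the same object is supplied by Lemma~\ref{lem:continuity}.

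The heart of the proof is the identification of the image of $B_A \circ \iota_*$ with $\ker \tau_*$ via the de la Harpe--Skandalis determinant. The key identity is that for every smooth based loop $\lambda$ in $U(A)$ one has $\tilde{\Delta}_\tau(\lambda) = \tau_*(B_A[\lambda]) \in \R$, which follows from homotopy invariance of $\tilde{\Delta}_\tau$ and a direct evaluation on the Bott normal form $\lambda_{p,q}(t) = e^{2\pi i tp}e^{-2\pi i tq}$, giving $\tilde{\Delta}_\tau(\lambda_{p,q}) = \tau(p) - \tau(q)$. For one inclusion, represent a class of $\pi_1(SU_\tau(A))$ by a smooth loop $\mu$ in the Banach Lie group $SU_\tau(A)$; its logarithmic derivative $\mu^{-1}\dot{\mu}$ lies in the Lie algebra $\{ih : h \in A^{sa},\ \tau(h) = 0\}$, so $\tau(\mu^{-1}\dot{\mu}) \equiv 0$ and hence $\tilde{\Delta}_\tau(\mu) = 0$, which forces $\tau_*(B_A\iota_*[\mu]) = 0$. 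For the reverse inclusion, given $x \in \ker \tau_*$ choose a smooth loop $\lambda$ with $B_A[\lambda] = x$, so $\tilde{\Delta}_\tau(\lambda) = 0$, and correct the phase by setting $\psi(t) = \frac{1}{2\pi i}\int_0^t \tau(\lambda^{-1}\lambda')\,ds$ and $\mu(t) = e^{-2\pi i \psi(t)}\lambda(t)$. A short computation gives $\tau(\mu^{-1}\mu') \equiv 0$, so every truncation $\mu|_{[0,t]}$ has vanishing $\tilde{\Delta}_\tau$ and Lemma~\ref{lem:pathSUtau} places $\mu(t)$ in $SU_\tau(A)$; since $\psi(0) = \psi(1) = 0$ the path $\mu$ is a based loop in $SU_\tau(A)$ differing from $\lambda$ by the central scalar loop $e^{-2\pi i \psi}$, which has winding number zero and is therefore nullhomotopic in $U(A)$. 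Thus $[\mu] = [\lambda]$ in $\pi_1(U(A))$ and $x$ lies in the image.

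Putting the three steps together gives the isomorphism $\pi_1(SU_\tau(A)) \cong \ker \tau_*$, and the final assertion is immediate: $SU_\tau(A)$ is a connected, hence path-connected, topological group, so if $\tau_*$ is injective then $\ker \tau_* = 0$ and $SU_\tau(A)$ is simply connected. I expect the reverse inclusion to be the main obstacle, since it requires verifying both that the phase-corrected loop $\mu$ is genuinely $d_\tau$-continuous into $SU_\tau(A)$ (handled by Lemma~\ref{lem:continuity}) and that the correcting scalar loop leaves the homotopy class in $U(A)$ unchanged.
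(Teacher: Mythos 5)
Your proof is correct, but it takes a genuinely different route from the paper's. The paper never considers the inclusion $\iota\colon SU_\tau(A)\hookrightarrow U(A)$ at all: instead it compares the two Hurewicz fibrations $\bT\to U(A)\to PU(A)$ and $e^{2\pi i\tau_*(K_0(A))}\to SU_\tau(A)\to PU(A)$ (the second having discrete fibre by Remark~\ref{rem:discrete}), computes $\pi_1(PU(A))\cong K_0(A)/\Z\langle[1_A]\rangle$ from the long exact sequence of the first, and identifies $\pi_1(SU_\tau(A))$ with the kernel of the connecting map $\pi_1(PU(A))\to\pi_0\bigl(e^{2\pi i\tau_*(K_0(A))}\bigr)$, which it evaluates on Bott loops $\lambda_{p,q}$ by producing explicit lifts to $SU_\tau(A)$ via Lemma~\ref{lem: representingunitaries}. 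You instead obtain injectivity of $\iota_*$ from Theorem~\ref{thm:universalcover1} (legitimate and non-circular: it precedes the statement and its proof is independent of the present theorem) and pin down the image of $B_A\circ\iota_*$ through the identity $\tilde{\Delta}_\tau(\lambda)=\tau_*(B_A[\lambda])$; your phase-corrected loop $\mu=e^{-2\pi i\psi}\lambda$ is the same analytic device as the paper's decomposition $\eta=e^{2\pi i h}g$, so the computational core coincides and only the homotopy-theoretic packaging differs. The paper's route keeps Theorems~\ref{thm:universalcover1} and~\ref{thm:simplyconnected} logically independent (they are combined only in Theorem~\ref{thm:universalcover}) and yields $\pi_1(PU(A))$ as a byproduct; yours buys the more explicit statement that $\iota_*$ embeds $\pi_1(SU_\tau(A))$ onto $\ker\tau_*\subset K_0(A)\cong\pi_1(U(A))$. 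Two points you should make explicit: (i) the forward inclusion requires every class in $\pi_1(SU_\tau(A))$ to admit a smooth (or at least piecewise $C^1$) representative; genuine smooth approximation in Banach manifolds is delicate, but in the Banach Lie group $SU_\tau(A)$ one can deform any loop, rel basepoint, into a finite concatenation of arcs $t\mapsto g\,e^{t\xi}$ with $\xi\in i(A^{sa}\cap\ker\tau)$, for which $\tau(\mu^{-1}\dot{\mu})\equiv 0$ still holds; (ii) the hypothesis $\pi_1(U(A))\cong K_0(A)$ must be read, as the paper implicitly does, as asserting that the canonical map $B_A$ is an isomorphism, since you use its surjectivity to choose $\lambda$ with $B_A[\lambda]=x$ and its injectivity for the final identification.
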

\begin{proof}
    The short exact sequences
    \begin{align*}
        \bT\longrightarrow &U(A)\longrightarrow PU(A),\\
        e^{2\pi i \tau_{*}(K_0(A))}\longrightarrow &SU_\tau(A)\longrightarrow PU(A)
    \end{align*}
    are Hurewicz fibrations. The long exact sequence of homotopy groups arising from the first fibration yields  
    \[
    \ldots \rightarrow \pi_1(\bT)\xrightarrow{\iota} \pi_1(U(A)) \rightarrow \pi_1(PU(A))\rightarrow 0.
    \]
    As $A$ is tracial, then after identifying $\pi_1(\bT)\cong \Z$ and $\pi_1(U(A))\cong K_0(A)$ the map $\iota$ sends $\Z$ injectively into $K_0(A)$ and so $\pi_1(PU(A))\cong K_0(A)/\Z\langle[1_A]\rangle$. The long exact sequence of homotopy groups associated to the second fibration yields
    $$0\rightarrow \pi_1(SU_\tau(A))\rightarrow \pi_1(PU(A))\xrightarrow{\Phi} \pi_0(e^{2\pi i \tau_{*}(K_0(A))})\rightarrow 0.$$
    Where the mapping $\Phi$  is defined by sending a class $[\lambda]\in \pi_1(PU(A))$, of a based loop $\lambda$ in $PU(A)$, to the path component of the endpoint $f(1)$ of any continuous lift $f:[0,1]\rightarrow SU_\tau(A)$ for $\lambda$ with $f(0)=1$. 
    \par By the surjection $K_0(A)\rightarrow \pi_1(PU(A))$, every loop in $\pi_1(PU(A))$ is represented by a loop $\lambda_{p,q}(t)=\eta(t)+\bT$ with $\eta(t)$ differentiable and homotopic to $e^{2\pi itp}e^{-2\pi i tq}$ in $U_n(A)$ with $p,q\in P_n(A)$ and $n\in \N$. By the proof of Lemma \ref{lem: representingunitaries} there are continuous, differentiable paths $g\in SU_\tau(A)$ starting at $1$ and $h\in \R$ starting at $0$ such that $\eta(t)=e^{2\pi i h(t)}g(t)$ and $\tilde{\Delta}_\tau(g)=0$. Then $g(t)$ is a continuous lift of $\lambda_{p,q}(t)$ to $SU_\tau(A)$ and 
    \[g(1)=e^{-2\pi ih(1)}=e^{-2\pi i \tilde{\Delta}_\tau(\eta)}=e^{2\pi i (\tau_*(p)-\tau_*(q))}
    \]
Thus $\Phi(\lambda_{p,q})=f_{p,q}(1)=e^{2\pi it \tau_{*}(p)-\tau_{*}(q)}$. Now $\pi_1(SU_\tau(A))\cong \{x\in \pi_1(PU(A)):\Phi(x)=1\}$, which is precisely $\{x\in K_0(A):\tau_{*}x=0\}$.
\end{proof}
\begin{theorem}\label{thm:universalcover}
    Let $A$ be a unital, simple, separable C$^*$-algebra $A$ with $U(A)=U(A)_0$, $\pi_1(U(A))\cong K_0(A)$. Suppose that $A$ admits a trace $\tau$ with $\tau_{*}:K_0(A)\rightarrow \R$ faithful. Then there is an isomorphism $\covU{A}\cong \R\times SU_\tau(A)$.
\end{theorem}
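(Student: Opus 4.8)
The plan is to obtain the statement by combining the two preceding results, \cref{thm:universalcover1} and \cref{thm:simplyconnected}, once the hypotheses of each are verified from the assumptions on $A$ and $\tau$. The key observation is that \cref{thm:universalcover1} already produces an isomorphism $\covU{A}\cong\R\times\widetilde{SU_\tau}(A)$, so the only substantive task left is to remove the tilde on the right-hand side by showing that $SU_\tau(A)$ is simply connected under the present hypotheses.

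First I would check the two structural hypotheses feeding into the black boxes. Since $A$ is simple and unital, its centre is trivial, $Z(A)=\C$; this is exactly the condition required in \cref{thm:simplyconnected}. Next, the assumption $\pi_1(U(A))\cong K_0(A)$ should be read as saying that the canonical Bott map $B_A\colon\pi_1(U(A))\to K_0(A)$ is an isomorphism, in particular surjective. Surjectivity of $B_A$ is precisely the hypothesis of \cref{prop: kerdetcoincidewithSU}, which then yields the identity $\ker(\Delta_\tau)\cap U(A)_0=SU_\tau(A)$. Together with $U(A)=U(A)_0$, this is exactly what \cref{thm:universalcover1} requires, so we obtain $\covU{A}\cong\R\times\widetilde{SU_\tau}(A)$.

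It remains to identify $\widetilde{SU_\tau}(A)$ with $SU_\tau(A)$. Since $\tau_*\colon K_0(A)\to\R$ is faithful (injective), while $Z(A)=\C$ and $\pi_1(U(A))\cong K_0(A)$, \cref{thm:simplyconnected} gives $\pi_1(SU_\tau(A))\cong\{x\in K_0(A):\tau_* x=0\}=0$, so $SU_\tau(A)$ is simply connected. Because $SU_\tau(A)$, with its $d_\tau$ topology, is a Banach Lie group and hence locally path connected and semilocally simply connected, a simply connected such group is its own universal cover, i.e.\ the covering projection $\widetilde{SU_\tau}(A)\to SU_\tau(A)$ is an isomorphism of topological groups. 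Substituting this into the isomorphism of the previous step yields $\covU{A}\cong\R\times SU_\tau(A)$, as claimed.

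The main obstacle I anticipate is not any single hard computation but rather the bookkeeping of topologies: one must ensure that the homotopy-theoretic conclusions of \cref{thm:simplyconnected} (computed via the Hurewicz fibration associated to the $d_\tau$ topology) refer to the same group as the one entering \cref{thm:universalcover1}, and that passing from ``simply connected'' to ``equal to its universal cover'' is legitimate in the Banach Lie setting. Here \cref{lem:continuity}, which says that norm- and $d_\tau$-continuity agree for maps out of locally path connected spaces, is the tool that makes the relevant fundamental groups coincide, so I would invoke it to guarantee that the simple-connectivity of $SU_\tau(A)$ is unambiguous across the two topologies.
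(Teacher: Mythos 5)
Your proposal is correct and takes exactly the same route as the paper, whose entire proof is the one-line citation ``This follows from Theorems \ref{thm:universalcover1} and \ref{thm:simplyconnected}.'' Your additional bookkeeping — deriving $Z(A)=\C$ from simplicity, using \cref{prop: kerdetcoincidewithSU} to verify the standing assumption $SU_\tau(A)=\ker(\Delta_\tau)\cap U(A)_0$, and noting that a simply connected Banach Lie group equals its universal cover — simply makes explicit the hypothesis checks the paper leaves to the reader.
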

\begin{proof}
    This follows from Theorems \ref{thm:universalcover1} and \ref{thm:simplyconnected}.
\end{proof}
The conditions of the theorem above are satisfied in the cases of UHF algebras, the Jiang--Su algebra or irrational rotation algebras. As an application of Theorem \ref{thm:universalcover} we can now recover instances of \cite[Theorem 3.9]{IZ23} from the crossed module formalism of Section \ref{sec:crossedmodules}. We argue this in the following remark.
\begin{remark}
    Let $(A,\tau)$ be a unique trace C$^*$-algebra satisfying the conditions of Theorem \ref{thm:universalcover}. There is an isomorphism $\phi$ from the crossed module given by 
    \begin{align*}
    \Ad:\R\times SU_\tau(A) &\rightarrow \Aut A\\
    (\lambda,u)&\rightarrow \Ad(u)
    \end{align*}
    to the crossed module $\widetilde \cG_A$.\footnote{The action of $\Aut A$ on $\R\times SU_\tau(A)$ is given by the canonical action on the second coordinate.} This isomorphism is defined by 
    \begin{align*}
    \phi:\R \times SU_\tau(A)&\rightarrow \widetilde{U}(A)\\
    (\lambda,u)&\mapsto e_{\lambda}\widetilde{u}
    \end{align*}
    where $\widetilde{u}$ is any path in $SU_\tau(A)$ starting at $1$ and ending at $u$. Note that this is independent of the choice of path $\tilde{u}$ precisely because $SU_\tau(A)$ is simply connected.
    \par Restricted to $\R \times e^{2\pi i \tau(K_0(A))}$, the map $\phi$ maps surjectively onto $K_0^{\#}(A)$ by sending $(\lambda,e^{2\pi i\tau(x)})\mapsto e_{\lambda+\tau(x)}-x$. Indeed, for any projection $p$ in $A$ it is clear that $\phi(\lambda,e^{2\pi i \tau(p)})(t)=e^{2\pi i \lambda t}e^{2\pi i (\tau(p)-p)t}=e_{\lambda+\tau(p)}-[p]$, an argument as in the end of Theorem \ref{thm:simplyconnected} shows the case for general $x\in K_0(A)$. Denote by $\psi:K_0^{\#}(A)\rightarrow \R\times e^{2\pi i \tau(K_0(A))}$ the inverse of $\phi$ restricted to $K_0^{\#}(A)$. It is easy to see that $\psi(x)=(\tau(x),e^{-2\pi i \tau(x)})$ for all $x\in K_0(A)$ and that $\psi([e_\lambda])=(0,\lambda)$. Thus $\phi$ and $\psi$ give an isomorphism between the short exact sequences of crossed modules
    \[
    K_0^{\#}(A)\longrightarrow \widetilde{\cG}_A\longrightarrow P\cG_A
    \]
    given in Lemma \ref{lem:ses-covU} and
    \begin{equation} \label{eqn:short-ex-RSU(A)}
	\begin{tikzcd}
		\R \times e^{2\pi i \tau(K_0(A))} \ar[r] \ar[d] & \R \times SU_\tau(A)\ar[d,"\Ad{}"] \ar[r] & PU(A) \ar[d,"\Ad{}"] \\
		1 \ar[r] & \Aut A \ar[equal]{r}	 & \Aut A.
	\end{tikzcd}
\end{equation}
By Lemma \ref{lem:longexact}, the crossed module in \eqref{eqn:short-ex-RSU(A)} induces an exact sequence of the associated cohomology pointed sets whose last term is the mapping $H^1(\Gamma, P\cG_A)\rightarrow H^3(\Gamma,\R \times e^{2\pi i\tau(K_0(A))})$ sending a conjugacy class of a $\Gamma$-kernel $\alpha$ to the three cocycle $(0,\ob_\tau(\alpha))$. Now, as $\tau$ is preserved by any automorphism of $A$ and $\tau_{*}$ is faithful on $K_0(A)$, the induced action of any $\Gamma$-kernel on $K_0^{\#}(A)$ is trivial. Thus the rightmost term of the exact sequence in Theorem \ref{thm:operatoralgcrossedmoduleseq} is simply $H^3(\Gamma,K_0^{\#}(A))$. By the naturality of the associated long exact sequences, the isomorphism of crossed modules above induces an isomorphism of the associated exact sequences of the cohomology pointed sets where the last vertical isomorphism is $\psi_*:H^3(\Gamma,K_0^{\#}(A))\rightarrow H^3(\Gamma,\R \times e^{2\pi i \tau(K_0(A))})$. Hence we have that $\psi_*(\widetilde \ob(\alpha))=(\ob_\tau(\alpha),0)$ recovering \cite[Theorem 3.9]{IZ23}.
\end{remark}

\appendix

\section{Proof of \cref{thm:homotpyeqclassifyingspaces} ($B^D\cG \simeq B^\otimes \cG$)}\label{appendix}

Let $\cG = (\partial \colon H \to G)$ be a crossed module. Recall that $B^D\cG = \lVert N_\ast^D(\cG) \rVert$, where the Duskin nerve $N_\ast^D(\cG)$ is the simplicial space defined by
\[
    N_n^D(\cG) = \Fun{[n]}{\cG}\ .
\]
The pseudofunctors $\Fun{[n]}{\cG}$ form the object space of a topological category $\lxF{[n]}{\cG}$, in which the morphisms are the natural transformations, where we will only consider transformations that map to the identity on objects. Let $(\alpha^{(k)}, u^{(k)})$ for $k \in \{0,1\}$ be pseudofunctors $[n] \to \cG$. A natural transformation between them corresponds to elements $(w_{ij})_{i < j}$ with $w_{ij} \in H$ such that
\begin{equation} \label{eqn:nat}
    \begin{tikzcd}[column sep=2cm]
        \alpha^{(0)}_{ik} \arrow[r, Rightarrow, "u_{ijk}^{(0)}"] \arrow[d, Rightarrow, "w_{ik}" left] & \alpha^{(0)}_{ij} \circ \alpha^{(0)}_{jk} \arrow[d, Rightarrow, "w_{ij}\,\circ_h\, w_{jk}"] \\
        \alpha^{(1)}_{ik} \arrow[r, Rightarrow, "u_{ijk}^{(1)}" below] & \alpha^{(1)}_{ij} \circ \alpha^{(1)}_{jk}
    \end{tikzcd}
\end{equation}
The commutation of diagram \eqref{eqn:nat} implies that $\alpha_{ij}^{(1)} = \partial(w_{ij}) \alpha_{ij}^{(0)}$ and 
\(
    w_{ij}\alpha_{ij}^{(0)}(w_{jk})u_{ijk}^{(0)} = u_{ijk}^{(1)}w_{ik}
\),
which should be compared to \eqref{eqn:alpha12} and \eqref{eqn:u12} with $\gamma=\id{}$. The spaces $\lxF{[n]}{\cG}$ assemble to a simplicial topological category. By taking the nerve for each $n$ we end up with a bisimplicial space
\[
    X_{m,n} = N_m\lxF{[n]}{\cG} 
\]
and its geometric realisation $B\Flax_{\cG} = \lVert\,\lVert X_{\ast,\ast} \rVert_m\rVert_n$. Here, the subscripts $m$ and $n$ denote the two simplicial directions and we are using the fat geometric realisation for both of them. By \cite[Equation (1.9)]{EBWI19} the order of the two geometric realisations does not matter and only changes the outcome up to homeomorphism.

Similarly, the space $B^\otimes \cG$ can be obtained as the geometric realisation of a bisimplicial space in the following way: Let $\stFun{[n]}{\cG}$ be the space of strict functors $[n] \to \cG$. Each such functor corresponds to a chain of elements $(\alpha_{i(i+1)})_{0 \leq i \leq n-1}$ with $\alpha_{ij} \in G$. It is the object space of the topological category $\stF{[n]}{\cG}$, where the morphisms are again the natural transformations that map to the identity on objects. With this definition we have a canonical isomorphism of topological categories
\[
    \stF{[1]}{\cG} \cong \cG_{\otimes} \qquad , \qquad \stF{[n]}{\cG} \cong (\cG_{\otimes})^n
\]
and the monoidal structure of $\cG_{\otimes}$ corresponds to the composition of functors in $\stF{[1]}{\cG}$. In particular,
\[
    \lvert N_\ast \stF{[1]}{\cG} \rvert \cong \lvert N_\ast \cG_{\otimes} \rvert \quad \text{and} \quad \lVert\,\lvert N_\ast \stF{[\ast]}{\cG} \rvert\,\rVert = B^{\otimes} \cG\ .
\]
If $\cG = (\partial \colon H \to G)$ and $H$ is well-pointed, then for each fixed $n \in \N_0$ the simplicial space $k \mapsto N_k \stF{[n]}{\cG}$ is good and the quotient map
\[
    \lVert N_\ast \stF{[n]}{\cG} \rVert \to \lvert N_\ast \stF{[n]}{\cG} \rvert 
\]
is a weak homotopy equivalence by \cite[Proposition A.1 (iv)]{SE74}. The above map is compatible with the simplicial structure. Therefore
\begin{equation} \label{eqn:BfG}
    B^{\otimes}_f\cG := \lVert\,\lVert N_\ast \stF{[\ast]}{\cG} \rVert\,\rVert \to \lVert\,\lvert N_\ast \stF{[\ast]}{\cG} \rvert\,\rVert = B^\otimes \cG
\end{equation}
is a weak equivalence as well by \cite[Theorem 2.2]{EBWI19}.

By construction we now have two maps to $B\Flax$: We can consider the Duskin nerve $N_n^D(\cG) = \Fun{[n]}{\cG}$ as a bisimplicial space that is constant in one direction and obtain a map $B^D\cG \to B\Flax$. We also have the inclusion of the category of strict functors into the category of pseudofunctors, which gives $B_f^\otimes\cG \to B\Flax$. We will finish the proof by showing that both of these are weak homotopy equivalences.  

\begin{lemma} \label{lem:BtensorBF}
    The map $B_f^\otimes\cG \to B\Flax_{\cG}$ induced by the inclusion of the strict functor category into the category of pseudofunctors is a weak equivalence.
\end{lemma}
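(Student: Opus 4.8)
The plan is to prove the statement levelwise in the simplicial direction indexed by $n$. Recall that $B^\otimes_f\cG$ and $B\Flax_{\cG}$ are the realisations of the bisimplicial spaces $N_m\stF{[n]}{\cG}$ and $N_m\lxF{[n]}{\cG}$, and that the map in question is induced, for each fixed $n$, by the inclusion of topological categories $\iota_n \colon \stF{[n]}{\cG} \hookrightarrow \lxF{[n]}{\cG}$. Once I show that $\lVert N_\ast \iota_n \rVert$ is a weak homotopy equivalence for every $n$, realising in the remaining $n$-direction finishes the argument by \cite[Theorem 2.2]{EBWI19} (and \cite[Equation (1.9)]{EBWI19} to commute the two realisations). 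So the whole proof reduces to the fixed-$n$ statement.

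The heart of the matter is a continuous strictification retraction. First I would define a continuous functor $s_n \colon \lxF{[n]}{\cG} \to \stF{[n]}{\cG}$ sending a pseudofunctor $(\alpha_{ij}, u_{ijk})$ to the strict functor whose generating $1$-morphisms are the edges $\alpha_{i(i+1)}$ and whose longer morphisms are the strict composites $\alpha_{i(i+1)}\cdots\alpha_{(j-1)j}$; on objects this is continuous since it only records and composes the edges, and by construction $s_n \circ \iota_n = \id{}$. The key step is to produce a continuous natural isomorphism $\theta \colon \id{} \Rightarrow \iota_n \circ s_n$. By the description of natural transformations in diagram \eqref{eqn:nat}, such a $\theta$ is a continuous family $(w_{ij})$ in $H$ with $w_{i(i+1)} = 1_H$, which I would define recursively via the leftmost decomposition $w_{ik} = w_{i(i+1)}\,\alpha_{i(i+1)}(w_{(i+1)k})\,u_{i(i+1)k}$. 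I would then verify, using the $2$-cocycle identity \eqref{eqn:u}, that this equals $w_{ij}\,\alpha_{ij}(w_{jk})\,u_{ijk}$ for every intermediate index $j$, so that both relations encoded in \eqref{eqn:nat} hold (namely $\partial(w_{ij})\alpha_{ij}$ is the strict composite and the $u$-compatibility to the trivial $2$-morphisms of the target is satisfied). Since morphisms between strict functors are the same in both categories, $\iota_n$ is fully faithful, and I would define $s_n$ on morphisms by transport of structure, $\iota_n(s_n(\phi)) = \theta_{\mathrm{tgt}}\circ \iota_n(\phi)\circ \theta_{\mathrm{src}}^{-1}$; this is automatically functorial, continuous (as $\theta$, $\phi$, composition and inversion are), and renders $\theta$ natural.

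With the continuous natural isomorphism $\theta$ in hand, I can invoke the same mechanism already used in the proof of \cref{lem:map_on_BG}: a continuous natural transformation gives a continuous functor $\lxF{[n]}{\cG} \times I \to \lxF{[n]}{\cG}$, whose nerve is a simplicial homotopy by \cite[Proposition 6.2]{book:MaySimplicial}, which realises to a homotopy $\lVert N_\ast(\iota_n \circ s_n) \rVert \simeq \id{}$ by \cite[Lemma 1.15]{EBWI19}. Combined with the identity $s_n \circ \iota_n = \id{}$, this exhibits $\lVert N_\ast \iota_n \rVert$ as a homotopy equivalence for every $n$, which is the levelwise input required above.

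The main obstacle I anticipate is the bookkeeping of the strictification rather than any homotopy theory: verifying the well-definedness of the recursively defined $w_{ij}$ from the cocycle relation \eqref{eqn:u}, checking that $\partial(w_{ij})\alpha_{ij}$ indeed reproduces the strict composite, and — most delicately — confirming that the assignment $(\alpha, u) \mapsto (w_{ij})$ is genuinely continuous as a map from $\lxF{[n]}{\cG}$ into the space of natural transformations, so that $\theta$ is a continuous natural isomorphism. All the remaining steps are routine applications of the realisation lemmas already cited in the paper.
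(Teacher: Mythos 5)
Your proposal is correct and takes essentially the same approach as the paper's proof: the same levelwise reduction via \cite[Theorem 2.2]{EBWI19}, the same strictification retraction $\Psi$ with $\Psi\circ\Phi=\id{}$, the same natural transformation $\id{}\Rightarrow\Phi\circ\Psi$ assembled from the compositors $u_{ijk}$, and the same passage to a homotopy via \cite[Proposition 6.2]{book:MaySimplicial} and \cite[Lemma 1.15]{EBWI19}. The only cosmetic differences are that your recursion for $w_{ik}$ peels off the first edge (the paper peels off the last, $v_{i,\dots,j}=v_{i,\dots,j-1}u_{i(j-1)j}$, which agrees with yours by the cocycle identity) and that you define the retraction on morphisms by transport of structure along $\theta$ rather than by the explicit horizontal composites $w_{i(i+1)}\circ_h\cdots\circ_h w_{(j-1)j}$.
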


\begin{proof}
    By \cite[Theorem 2.2]{EBWI19} it suffices to show that for every $n \in \N_0$ the map
    \[
        \lVert N_\ast \stF{[n]}{\cG} \rVert \to \lVert N_\ast \lxF{[n]}{\cG} \rVert
    \]
    induced by the inclusion functor $\Phi \colon \stF{[n]}{\cG} \to \lxF{[n]}{\cG}$ is a weak equivalence. 
    
    Consider the continuous forgetful functor $\Psi \colon \lxF{[n]}{\cG} \to \stF{[n]}{\cG}$ that maps a pseudofunctor $(\alpha, u) \colon [n] \to \cG$ to the strict one given by the sequence $(\alpha_{01}, \alpha_{12}, \dots, \alpha_{(n-1)n})$ (i.e.\ $i < j$ is mapped to $\alpha_{i(i+1)} \circ \dots \circ \alpha_{(j-1)j})$. The functor $\Psi$ maps any natural transformation $w:(\alpha^{(0)},u^{(0)})\rightarrow (\alpha^{(1)},u^{(1)})$ to the natural transformation 
    \[
    \Psi((\alpha^{(0)},u^{(0)}))_{ij}=\alpha_{i(i+1)}^{(0)}\circ \ldots \circ\alpha_{(j-1)j}^{(0)}\xRightarrow{w_{i(i+1)}\circ_h\ldots \circ_h w_{(j-1)j}}\Psi((\alpha^{(1)},u^{(1)}))_{ij} 
    \]
    for $i<j$. We have $\Psi \circ \Phi = \id{}$. There is a natural transformation $\eta \colon \id{} \Rightarrow \Phi \circ \Psi$, which can be inductively defined as follows: On $\alpha_{ij}$ with $j - i = 1$ we define $\eta_{(\alpha,u)} = \id{}$. If $j - i = 2$, then we define it to be
    \[
        v_{i,(i+1),j} \colon u_{i(i+1)j} \colon \alpha_{ij} \Rightarrow \alpha_{i(i+1)} \circ \alpha_{(i+1)j}
    \]
    If $j -i = 3$, then we take 
    \[
       v_{i, (i+1), (j-1), j} = u_{i(i+1)(j-1)} u_{i(j-1)j} \colon \alpha_{ij} \Rightarrow \alpha_{i(i+1)} \circ \alpha_{(i+1)(i+2)} \circ \alpha_{(i+2)j}
    \]
    and in general
    \[
        v_{i, \dots, j} = v_{i, \dots, j-1} u_{i(j-1)j} \colon \alpha_{ij} \Rightarrow \alpha_{i(i+1)} \circ \dots \circ \alpha_{(j-1)j}
    \]
    This defines a natural transformation. With $I$ as in \cref{lem:map_on_BG} we have a functor $\lxF{[n]}{\cG} \times I \to \lxF{[n]}{\cG}$. Combining \cite[Proposition 6.2]{book:MaySimplicial} and \cite[Lemma 1.15]{EBWI19} as in the proof of \cref{lem:map_on_BG} this natural transformation gives rise to a homotopy 
    \[
        \lVert N_\ast \lxF{[n]}{\cG} \rVert \times [0,1] \to \lVert N_\ast \lxF{[n]}{\cG} \rVert
    \]
    between $\lVert N_{\ast} \Phi\rVert \circ \lVert N_{\ast} \Psi \rVert$ and the identity.
\end{proof}

\begin{remark}
    Note that the functor $\Psi$ is not compatible with the simplicial space structure on both sides, which is why we only get a levelwise equivalence and not a homotopy inverse to $\Phi$.
\end{remark}

\begin{lemma} \label{lem:BDBF}
    Let $(\partial \colon H \to G)$ be a crossed module such that $H$ is well-pointed. The map $B^D\cG \to B\Flax_{\cG}$ induced by the inclusions 
    \[
        \Fun{[n]}{\cG} \to \lxF{[n]}{\cG}
    \]
    is a weak homotopy equivalence.
\end{lemma}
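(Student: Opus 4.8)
The plan is to deduce \cref{lem:BDBF} by playing the two simplicial directions of the bisimplicial space $X_{m,n} = N_m\lxF{[n]}{\cG}$ off against each other. I regard the Duskin nerve $N^D_n(\cG)=\Fun{[n]}{\cG}$ as the constant-in-$m$ bisimplicial space $Y_{m,n}=\Fun{[n]}{\cG}$, and the comparison map $Y\to X$ as the one sending, in each bidegree, a pseudofunctor to the constant chain of identity natural transformations on it. Realising first in the $n$-direction gives $\lVert Y_{m,\ast}\rVert_n = B^D\cG$ for every $m$, while $\lVert X_{\ast,\ast}\rVert = B\Flax_{\cG}$ by \cite[Equation (1.9)]{EBWI19}. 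By \cite[Theorem 2.2]{EBWI19} applied in the $m$-direction it therefore suffices to prove that, for each fixed $m$, the map
\[
    B^D\cG = \lVert Y_{m,\ast}\rVert_n \longrightarrow \lVert X_{m,\ast}\rVert_n
\]
is a weak homotopy equivalence. It is essential to reduce in this order: one cannot instead fix $n$ and realise in $m$, since for $\cG=\cG_A$ a pseudofunctor $[1]\to\cG$ has automorphism group $\ker(\Ad)=ZU(A)\neq 1$, so that $\lVert X_{\ast,1}\rVert_m = \cB\lxF{[1]}{\cG}$ is a nontrivial homotopy quotient of $G$ and is not weakly equivalent to $Y_{\ast,1}=G$.

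The key step is to recognise, for fixed $m$, the simplicial space $n\mapsto N_m\lxF{[n]}{\cG}$ as the Duskin nerve $N^D_\ast(\cG_m)$ of an auxiliary topological $2$-group $\cG_m$. Indeed, an $m$-chain of natural transformations of pseudofunctors $[n]\to\cG$ is the same datum as a strictly unital pseudofunctor $[n]\to\cG_m$, where $\cG_m$ is the functor $2$-group $[\,[m],\cG\,]$: its single object corresponds to $X_{m,0}=\ast$, its space of $1$-morphisms is $X_{m,1}\cong G\times H^{m}$, and the associated crossed module is built continuously out of $\partial\colon H\to G$ and the poset $[m]$. I would check that this identification is compatible with the simplicial structure in $n$ and with the topology (all data being valued in the Polish groups $G$ and $H$, cf.\ \cref{rmk:k-space}), so that $\lVert X_{m,\ast}\rVert_n = B^D\cG_m$ and the comparison map of the previous paragraph is $B^D$ applied to the crossed-module morphism $\cG\to\cG_m$ that includes $g$ as the constant chain $(g,1,\dots,1)$.

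Since $0$ is an initial object of $[m]$, evaluation at $0$ defines a strict retraction $\cG_m\to\cG$ with the constant inclusion as a section, and the unique morphisms $0\to i$ in $[m]$ furnish a \emph{continuous} pseudonatural transformation from $\id{\cG_m}$ to (inclusion)$\circ$(evaluation). Feeding this pseudonatural transformation through the interval category $I$ exactly as in the proof of \cref{lem:map_on_BG} --- that is, via \cite[Proposition 6.2]{book:MaySimplicial} and \cite[Lemma 1.15]{EBWI19} --- produces a homotopy on fat realisations exhibiting $B^D\cG\to B^D\cG_m$ as a homotopy equivalence, hence a weak equivalence, for every $m$. This is precisely the levelwise-in-$m$ statement required above, so the total map $B^D\cG\to B\Flax_{\cG}$ is a weak homotopy equivalence; combined with \cref{lem:BtensorBF} and \eqref{eqn:BfG} this completes the proof of \cref{thm:homotpyeqclassifyingspaces}.

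I expect the main obstacle to lie in the bookkeeping of the middle step: verifying that $\cG_m$ is a genuine topological crossed module and that the contraction data arising from the initial object of $[m]$ satisfy all the coherence (associativity and naturality) identities \emph{continuously}. Concretely, this amounts to writing down the explicit simplicial homotopy $h_j\colon X_{m,n}\to X_{m,n+1}$ that realises $s\circ p\simeq\id$ (where $s$ is the constant-chain inclusion and $p$ the source projection) and checking the simplicial identities; everything else is a formal application of the realisation results already invoked in \cref{lem:BtensorBF} and the topological adaptation of \cite{BUCE03}.
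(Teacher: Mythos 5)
Your reduction is identical to the paper's: regard $B^D\cG$ as the realisation of the bisimplicial space that is constant in the $m$-direction, invoke \cite[Equation (1.9)]{EBWI19} and \cite[Theorem 2.2]{EBWI19}, and reduce to showing that for each fixed $m$ the constant-chain inclusion $B^D\cG \to \lVert N_m\lxF{[\ast]}{\cG}\rVert$ is a weak equivalence. (Your aside on why one must fix $m$ and realise in $n$ --- realising in $m$ at $n=1$ gives $\lVert N_\ast \cG_\otimes\rVert$, a homotopy quotient of $G$ rather than $G$ itself --- is correct, and is left implicit in the paper.) Where you genuinely diverge is the levelwise step. The paper argues by induction on $m$: the horizontal degeneracy $s_0^h \colon N_{m-1}\lxF{[n]}{\cG} \to N_m\lxF{[n]}{\cG}$ is shown to be a simplicial deformation retract by writing down an explicit simplicial homotopy (the formulas $\mu_k v^0$, $h_k v^0$) and applying \cite[Lemma 1.15]{EBWI19}. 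You instead identify $n \mapsto N_m\lxF{[n]}{\cG}$ with the Duskin nerve of an auxiliary one-object topological $2$-group $\cG_m$ and contract $\cG_m$ onto $\cG$ categorically, pushing a transformation $\id{\cG_m} \Rightarrow s\circ p$ through the interval category $I$ exactly as in \cref{lem:map_on_BG}. This is a viable and more conceptual route: it explains the levelwise equivalence as the statement that $\cG$ is a $2$-categorical deformation retract of its $2$-group of $m$-chains, and it produces the homotopy in one stroke rather than one degeneracy at a time. What the paper's route buys in exchange is self-containedness: it never has to construct a new topological crossed module, nor to prove that a prescribed simplicial space is a Duskin nerve.

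Two caveats, neither fatal. First, your $\cG_m$ is not ``the functor $2$-group $[[m],\cG]$'': the $2$-category of pseudofunctors $[m]\to\cG$ has many objects. What is needed is the one-object $2$-group whose $1$-morphisms form the group $N_m\cG_\otimes$ (homeomorphic to $G\times H^m$) and whose $2$-morphisms $c \Rightarrow d$ between chains are elements $u\in H$ with $\partial(u)$ carrying the level-zero component of $c$ to that of $d$; the associated crossed module then has underlying space $H\times H^m$ with a twisted multiplication, the $H^m$-factor of a $2$-morphism recording its target chain. With this definition one checks that the constraint and coherence equations for a pseudofunctor $[n]\to\cG_m$ reduce exactly to the level-zero equations in $\cG$, the edge-chain data being free, which is precisely why $N^D_\ast(\cG_m)\cong N_m\lxF{[\ast]}{\cG}$ holds naturally in $[n]$. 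Second, feeding the transformation through $I$ uses that the Duskin nerve preserves products, so that $N^D_\ast(\cG_m\times I)\cong N^D_\ast(\cG_m)\times \Delta[1]$; this is true (a strictly unital pseudofunctor into a product is a pair of such), but it must be stated, as \cref{lem:map_on_BG} only treats a $1$-categorical source. These verifications --- the topological crossed-module structure on $\cG_m$, the nerve identification, and the continuity and coherence of the contraction --- are the actual content of your proof and are comparable in bookkeeping to the paper's explicit formulas; you flag them correctly as the remaining work, and they do all go through.
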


\begin{proof}
    Consider $Y_{n,m} = \Fun{[n]}{\cG}$ as a bisimplicial space, which is constant in the $m$-direction, then $\lVert Y_{n,\ast} \rVert \simeq \Fun{[n]}{\cG} \times \Delta^\infty$ and the simplicial map $\Fun{[n]}{\cG} \to \lVert Y_{n,\ast} \rVert$ induces a homotopy equivalence 
    \[
        \lVert \Fun{[\ast]}{\cG} \rVert \simeq \lVert\,\lVert Y_{\ast,\ast} \rVert \,\rVert\ . 
    \]
    By \cite[Theorem 2.2]{EBWI19} and \cite[Equation (1.9)]{EBWI19} it suffices to show that for each $m \in \N_0$ the map
    \[
        \lVert Y_{\ast,m} \rVert \to \lVert N_{m} \lxF{[\ast]}{\cG} \rVert
    \]
    is a weak homotopy equivalence. Since $Y_{n,0} = N_0\lxF{[n]}{\cG}$, this will follow once we have shown that for $m \geq 1$ the first degeneracy map
    \[
        s_0^h \colon N_{m-1} \lxF{[n]}{\cG} \to N_{m} \lxF{[n]}{\cG}
    \]
    is a simplicial deformation retract (of the simplicial spaces in $n$). We call this degeneracy map the horizontal one. In contrast to that, the vertical ones~$s_k^v$ keep $m$ fixed and raise $n$. The homotopy inverse of $s_0^h$ is provided by the first horizontal face map 
    \[
        d_0^h \colon N_{m} \lxF{[n]}{\cG} \to N_{m-1} \lxF{[n]}{\cG}
    \]
    which satisfies $d_0^h \circ s_0^h = \id{}$. By \cite[Lemma 1.15]{EBWI19} the result will follow once we have constructed a simplicial homotopy between $s_0^h \circ d_0^h$ and $\id{}$. An $n$-simplex in $N_m\lxF{[n]}{\cG}$ takes the form 
    \begin{equation} \label{eqn:n-simpl}
        \begin{tikzcd}[column sep=1.5cm]
            x^0 \ar[r,Rightarrow,"v^0"] & x^1 \ar[r,Rightarrow,"v^1"] & \dots \ar[r,Rightarrow,"v^{m-2}"] & x^{m-1} \ar[r,Rightarrow,"v^{m-1}"] & x^m 
        \end{tikzcd}
    \end{equation}
    where $x^i = (\alpha^i, u^i) \in \Fun{[n]}{\cG}$ are objects in the functor category. The $k$th degree part $H_k$ of the simplicial homotopy $H$ will send this to
    \[
        \begin{tikzcd}[column sep=1.5cm]
             \mu_k v^0 \ar[r,Rightarrow,"h_k v^0"] & s_k^v x^1 \ar[r,Rightarrow,"s_k^v v^1"] & \dots \ar[r,Rightarrow,"s_k^v v^{m-2}"] & s_k^v x^{m-1} \ar[r,Rightarrow,"s_k^v v^{m-1}"] & s_k^v x^m            
        \end{tikzcd}
    \]
    in $N_m\lxF{[n+1]}{\cG}$ with $\mu_k v^0$ and $h_k v^0$ still to be defined. Let 
    \[
        \sigma_k(j) = \begin{cases}
            j & \text{if } j \leq k \ ,\\
            j-1 & \text{if } j > k
        \end{cases}
    \]
    and let $\mu_k v^0 = (\alpha^{\mu,k}, u^{\mu,k}) \in \Fun{[n+1]}{\cG}$ with
    \[
        \alpha^{\mu,k}_{i,j} = \begin{cases}
            \alpha^1_{i,j} & \text{if } j \leq k\ ,\\[.2cm]
            \alpha^0_{\sigma_k(i),j-1} & \text{if } j > k
        \end{cases} 
        \quad \text{and} \quad
        u^{\mu,k}_{i,j,\ell} = \begin{cases}
            u_{i,j,\ell}^1 & \text{if } \ell \leq k\ ,\\[.2cm]
            v_{i,j}^0 u_{i,j,\ell-1}^0 & \text{if } j \leq k < \ell\ , \\[.2cm]
            u^0_{\sigma_k(i),j-1,\ell-1} & \text{if } k < j
        \end{cases}
    \]
    It is straightforward to check that this indeed defines a valid element of $\Fun{[n+1]}{\cG}$ and that $\mu_0v^0 = s_0^v(x^0)$ (and hence $d_0^v(\mu_0v^0) = x^0)$ and $d^v_{n+1}(\mu_nv^0) = x^1$. The transformation $h_k v^0 \colon \mu_k v^0 \Rightarrow s_k^v x_1$ is defined by
    \[
        (h_k v^0)_{i,j} = \begin{cases}
            \id{\alpha_{i,j}^1} & \text{if } j \leq k\ , \\
            v_{\sigma_k(i),j-1}^0 & \text{if } j > k\ .
        \end{cases}
    \]
    Using \eqref{eqn:nat} we see that this is indeed natural. Note that $h_0 v^0 = s_k^v v^0$ (and hence $d_0^v (h_0 v^0) = v^0$) and $d_{n+1}^v(h_nv^0) = \id{}$. It is not hard to check that the map $H_k$ defined in this way satisfies the other conditions in \cite[Lemma 1.15]{EBWI19} as well and therefore provides a simplicial homotopy between \eqref{eqn:n-simpl} and 
    \[
        \begin{tikzcd}[column sep=1.5cm]
            x^1 \ar[r,Rightarrow,"\id{}"] & x^1 \ar[r,Rightarrow,"v^1"] & \dots \ar[r,Rightarrow,"v^{m-2}"] & x^{m-1} \ar[r,Rightarrow,"v^{m-1}"] & x^m 
        \end{tikzcd}    
    \]
    which is the image of \eqref{eqn:n-simpl} under $s_0^h \circ d_0^h$. This completes the proof.
\end{proof}

\begin{corollary}\label{cor:homotopyequivalence}
    Let $\cG = (\partial \colon H \to G)$ be a topological crossed module with $H$ well-pointed. There is a weak-equivalence 
    \[
        B^D\cG \simeq B^\otimes \cG
    \]
    that is natural in $\cG$.
\end{corollary}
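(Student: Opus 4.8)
The plan is to route both classifying spaces through the common intermediate space $B\Flax_{\cG} = \lVert\,\lVert X_{\ast,\ast}\rVert\,\rVert$ built from the bisimplicial space $X_{m,n} = N_m\lxF{[n]}{\cG}$, and then to read off the claim from a zig-zag of weak equivalences. All of the genuine technical work has already been carried out in the two preceding lemmas and in \eqref{eqn:BfG}, so at this stage the corollary is a formal consequence of assembling them correctly.

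Concretely, I would observe that \cref{lem:BDBF} provides a weak homotopy equivalence $B^D\cG \to B\Flax_{\cG}$ (induced by viewing the Duskin nerve as a bisimplicial space that is constant in the $m$-direction), while \cref{lem:BtensorBF} provides a weak homotopy equivalence $B^\otimes_f\cG \to B\Flax_{\cG}$ (induced by the inclusion of the strict functor category into the category of pseudofunctors). Finally, \eqref{eqn:BfG}, which is where the well-pointedness of $H$ enters to guarantee that the relevant simplicial spaces are good, gives a weak equivalence $B^\otimes_f\cG \to B^\otimes\cG$. Concatenating these produces the zig-zag
\[
    B^D\cG \xrightarrow{\ \simeq\ } B\Flax_{\cG} \xleftarrow{\ \simeq\ } B^\otimes_f\cG \xrightarrow{\ \simeq\ } B^\otimes\cG,
\]
and since weak homotopy equivalence is the equivalence relation generated by weak equivalences, this exhibits $B^D\cG \simeq B^\otimes\cG$.

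For the naturality statement I would note that each of the four spaces, and each of the three maps in the zig-zag, is obtained by applying functorial constructions — the nerve, the fat geometric realisations in both simplicial directions, and the inclusion and forgetful comparisons between the strict and lax functor categories — to the crossed module $\cG$. A morphism of crossed modules induces a continuous pseudofunctor and hence compatible maps on all of these categories and their nerves, so every arrow in the diagram above is a natural transformation of functors valued in $\Top$, and the whole zig-zag is natural in $\cG$. Since the real difficulty is concentrated in the two lemmas — in particular in the explicit simplicial deformation retract witnessing that $s_0^h$ is a level-wise equivalence in \cref{lem:BDBF} — the only remaining point to verify here is the commutativity of these naturality squares, which is immediate from functoriality of the geometric realisation.
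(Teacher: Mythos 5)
Your proposal is correct and matches the paper's proof essentially verbatim: the paper establishes the corollary by exactly the zig-zag $B^D\cG \to B\Flax_{\cG} \leftarrow B^\otimes_f\cG \to B^\otimes\cG$, citing \cref{lem:BDBF}, \cref{lem:BtensorBF} and \eqref{eqn:BfG} respectively, and deduces naturality from the fact that all three maps arise from constructions natural in $\cG$.
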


\begin{proof}
The weak equivalence is induced by the following zig-zag of continuous maps:
\[
    \begin{tikzcd}[column sep=1.6cm]
        B^\otimes \cG & \ar[l,"\simeq" above,"\text{eq. }\eqref{eqn:BfG}"] B^\otimes_f\cG \ar[r,"\text{Lem. } \ref{lem:BtensorBF}" below, "\simeq" above] & B\Flax_{\cG} & \ar[l,"\text{Lem. } \ref{lem:BDBF}" below, "\simeq" above] B^D\cG 
    \end{tikzcd}
\]
All of these are induced by constructions that are natural in $\cG$.
\end{proof}

\bibliographystyle{plain}
\bibliography{G-Kernels}

\end{document}